\newtheorem{theorem}{Theorem}[section]
\newtheorem*{theorem*}{theorem}
\newtheorem{proposition}[theorem]{Proposition}
\newtheorem{lemma}[theorem]{Lemma}
\newtheorem{corollary}[theorem]{Corollary}
\newtheorem{exem}[theorem]{Example}
\newtheorem{remark}[theorem]{Remark}
\def\Per{{\rm Per}}
\author[1]{Jaime G\'omez}
\address[1]{Facultad de Matem\'aticas, Pontificia Universidad Cat\'olica de Chile\\ Santiago, Chile.}
\email[1]{jagomez7@uc.cl}
\thanks{J. Gómez was supported by ANID/ Doctorado Nacional No. 21200054}
\begin{document}

\title[Topo-isomorphisms of Irregular Toeplitz subshifts]{Topo-isomorphisms of Irregular Toeplitz subshifts for  residually finite groups.}

\subjclass[2020]{Primary: 37B05, 37B10} 

\keywords{Toeplitz subshift, topo-isomorphism, residually finite group, mean-equicontinuous, invariant measure}

\date{}

\begin{abstract}

For each countable residually finite group $G$, we present examples of irregular Toeplitz subshifts in $\{0,1\}^G$ that are topo-isomorphic extensions of its maximal equicontinuous factor. 
To achieve this, we first establish sufficient conditions for Toeplitz subshifts to have invariant probability measures as limit points of periodic invariant measures of $\{0,1\}^G$. Next, we demonstrate that the set of Toeplitz subshifts satisfying these conditions is non-empty. When the acting group $G$ is amenable, this construction provides non-regular extensions of totally disconnected metric compactifications of $G$ that are (Weyl) mean-e\-qui\-con\-ti\-\-nuous dynamical systems.
\end{abstract}

\maketitle

\section{Introduction}\label{sec:intro}

The mean-e\-qui\-con\-ti\-\-nuous systems have been defined in \cite{LiTuYe15} for actions on $\mathbb{Z}$, and studied in the case of amenable groups in \cite{FuGrLe22} and \cite{LaSt18}. 
In the context of amenable groups, mean-equicontinuity  measures how closely a system resembles equicontinuity under certain Weyl-pseudometrics  associated with F\o lner sequences.
In this case, mean-equicontinuity is equivalent to being a topo-isomorphism extension of the maximal e\-qui\-con\-ti\-\-nuous factor, as proven in \cite{DoGl16} for actions on $\mathbb{Z}$ in the minimal case, and later extended to actions of amenable groups in \cite{FuGrLe22}.

\medskip 

Toeplitz subshifts were originally defined in \cite{JaKe69} and have been extensively studied due to their properties related to entropy, representation of Choquet simplices, among others (See for instance \cite{Wi84}, \cite{Do05}, \cite{GJ00}).
In \cite{DoLa98}, Downarowicz and Lacroix characterized Toeplitz subshifts as minimal symbolic almost $1$-$1$ extensions of odometers, which in turn implies that the maximal e\-qui\-con\-ti\-\-nuous factor of these systems are odometers. Within this class of subshifts, there exist regular Toeplitz subshifts, which were studied in \cite{Wi84} and provide examples of topo-isomorphic extensions of their respective maximal e\-qui\-con\-ti\-\-nuous factor.

\medskip

In subsequent years, Toeplitz subshifts were defined for actions of more general groups and  characterized as minimal symbolic almost $1$-$1$ extensions of $G$-odometers (see \cite{Co06},\cite{CoPe08}.\cite{Kr10}). 
The notion of regular Toeplitz subshift was also extended (see \cite{LaSt18} for amenable groups and \cite{CeCoGo23} for general residually finite groups).

Regular Toeplitz subshifts are examples of topo-isomorphic extensions of their maximal e\-qui\-con\-ti\-\-nuous factors, as we mention in Proposition \ref{regular-topo}.

\medskip

Despite the existence of examples of Toeplitz subshifts that are topo-isomorphic to their maximal e\-qui\-con\-ti\-\-nuous factors, these examples have so far been limited to the regular ones. The main challenge in finding new examples lies in the search for uniquely ergodic Toeplitz subshifts that are not regular. 
Downarowicz and Lacroix constructed an irregular example of this kind for $\mathbb{Z}$-actions in \cite{DoKa15}.
When the acting group is not $\mathbb{Z}$ the challenge is the lack of a well-behaved decomposition of some Toeplitz subshifts, as noted in \cite{Wi84} and further developed in \cite{Do05}.  This naturally leads us to the question of whether  similar examples exist when the acting group is not necessarily $\mathbb{Z}$.

\medskip

In this document, we construct irregular Toeplitz subshifts that are topo-isomorphic extensions of their maximal e\-qui\-con\-ti\-\-nuous factors. This construction is carried out for residually finite groups, with no assumption of amenability over the group.
This work also complements the findings in \cite{FuGrLe22}, as it provides new examples of mean-e\-qui\-con\-ti\-\-nuous systems over amenable residually finite groups beyond  $\mathbb{Z}^d$. 
To construct these examples, we establish sufficient conditions for a Toeplitz subshift in $\Sigma^G$ to have an invariant probability measure, which is a limit point of a sequence of periodic invariant probability measures over $\Sigma^G$, as shown in Proposition \ref{Supp-mu}. 
Subsequently, we construct an irregular Toeplitz array satisfying the hypothesis of Proposition \ref{Supp-mu}, leading to a subshift associated, which is a  topo-isomorphic extension of a totally disconnedted metric compactification. This result is stated with more precision as Theorem \ref{theo:main1}, which is the main Theorem of this document.

\begin{theorem}\label{theo:main1}
    Let $G$ be a countable, residually finite group and let $\overleftarrow{G}$ be a totally disconnected metric compactification of $G$.
    Then, there exists an \textit{irregular} Toeplitz element $\eta\in\{0,1\}^G$ such that the maximal e\-qui\-con\-ti\-nuous factor is $\overleftarrow{G}$, and $\overline{O_\sigma(\eta)}$ is a topo-isomorphic extension of $\overleftarrow{G}$.
\end{theorem}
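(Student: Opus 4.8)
The plan is to realise the prescribed compactification through a chain of finite-index subgroups, to build by hand a Toeplitz array with a large aperiodic part that nevertheless satisfies the hypotheses of Proposition~\ref{Supp-mu}, and then to read the measure-theoretic isomorphism with $\overleftarrow{G}$ off the way the associated periodic measures converge.

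\medskip

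First I would encode the datum $\overleftarrow{G}$ combinatorially: a totally disconnected metric compactification of $G$ is, up to conjugacy of the $G$-action, the $G$-odometer $\varprojlim_n G/\Gamma_n$ attached to a decreasing sequence $G=\Gamma_0\supseteq\Gamma_1\supseteq\cdots$ of finite-index subgroups with trivial intersection, and residual finiteness of $G$ guarantees that some such sequence realises the given $\overleftarrow{G}$. Fix one such chain together with fundamental domains $D_n$ of the $\Gamma_n$. The heart of the proof is then the explicit construction of a Toeplitz array $\eta\in\{0,1\}^G$ whose period structure is exactly $(\Gamma_n)$: one decides the entries of $\eta$ in stages, at stage $n$ freezing $\eta$ on a $\Gamma_n$-periodic set of positions, and one must do this so that \textup{(i)} every position of $G$ is eventually decided, so that $\eta$ is genuinely Toeplitz and, by the description of Toeplitz subshifts as minimal almost $1$-$1$ extensions of $G$-odometers (\cite{DoLa98,Co06,CoPe08,Kr10}), the maximal equicontinuous factor of $\overline{O_\sigma(\eta)}$ equals $\overleftarrow{G}$ --- here one also keeps the essential periods of $\eta$ cofinal with $(\Gamma_n)$, so that no coarser odometer appears; \textup{(ii)} at every stage a block of undecided positions of density bounded away from $0$ is preserved, so that $\bigcup_n\Per_{\Gamma_n}(\eta)$ does not have density $1$ and $\eta$ is \emph{irregular}; and \textup{(iii)} the combinatorial hypotheses of Proposition~\ref{Supp-mu} hold for $\eta$. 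Showing that these requirements are simultaneously satisfiable --- i.e.\ that the family of Toeplitz arrays meeting all of them is non-empty --- is the main obstacle; it is the residually finite analogue of the $\Z$-construction of \cite{DoKa15}, and the extra difficulty for a general group $G$ is precisely the absence of a canonical block decomposition along $(\Gamma_n)$.

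\medskip

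With $\eta$ in hand, Proposition~\ref{Supp-mu} supplies an invariant probability measure $\mu$ on $X:=\overline{O_\sigma(\eta)}$ which is a limit point of periodic invariant measures $\mu_n$ of $\{0,1\}^G$, where $\mu_n$ is the uniform measure on the finite $G$-orbit of the $\Gamma_n$-periodic point $\eta_n$ obtained by freezing $\eta$ on $\Per_{\Gamma_n}(\eta)$ and filling the remaining positions by a fixed, level-independent rule. Next I would prove that $X$ is uniquely ergodic, so that $\mu$ is its only invariant measure; for this the construction of the previous step has to be arranged so that the empirical measures of $\eta$ along the $D_n$ converge, equivalently so that the number of distinct $\Gamma_n$-patterns of $\eta$ agreeing with a given periodic pattern off the undecided positions stays controlled. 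This unique-ergodicity statement, \emph{together with} irregularity, is the delicate point highlighted in the introduction. Since $X$ is minimal, unique ergodicity together with the isomorphism below is exactly what is required for $X$ to be a topo-isomorphic extension of $\overleftarrow{G}$, in the same spirit as Proposition~\ref{regular-topo} for the regular case.

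\medskip

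It remains to show that the canonical factor map $\pi\colon X\to\overleftarrow{G}$ onto the maximal equicontinuous factor is a measure-theoretic isomorphism of $(X,\mu)$ onto $(\overleftarrow{G},\mathrm{Haar})$; note that $\pi_{*}\mu=\mathrm{Haar}$ automatically, since the odometer is uniquely ergodic. It suffices to check that every coordinate function $x\mapsto x(g)$ on $X$ agrees $\mu$-a.e.\ with a function of $\pi(x)$. Fix $g\in G$. For each level $n$, the value that the periodic point $\eta_n$ contributes at position $g$ in a translate depends only on the image of the translation parameter in $G/\Gamma_n$, hence defines a function $F_{n,g}$ on $\overleftarrow{G}$ that factors through the projection $\pi_n\colon\overleftarrow{G}\to G/\Gamma_n$; moreover $F_{n+1,g}$ differs from $F_{n,g}$ only over the points of $\overleftarrow{G}$ whose corresponding position lies in $\Per_{\Gamma_{n+1}}(\eta)\setminus\Per_{\Gamma_n}(\eta)$, so that $\sum_n\lVert F_{n+1,g}-F_{n,g}\rVert_{L^1(\mathrm{Haar})}$ is at most the density of the set of positions of $G$ that are eventually periodic for $\eta$, and in particular is finite. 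Thus $(F_{n,g})_n$ is Cauchy in $L^1(\mathrm{Haar})$ and converges to a Borel function $\tilde f_g$ on $\overleftarrow{G}$. Using that $\mu$ is a limit point of the $\mu_n$ one checks that $\mu\bigl(\{x:x(g)=F_{n,g}(\pi(x))\}\bigr)\to 1$ as $n\to\infty$, and combining this with the fact that $F_{n,g}\circ\pi\to\tilde f_g\circ\pi$ in $L^1(\mu)$ forces $x(g)=\tilde f_g(\pi(x))$ for $\mu$-a.e.\ $x$. As the coordinate functions generate the Borel $\sigma$-algebra of $X\subseteq\{0,1\}^G$, this shows that $\pi$ is a measure-theoretic isomorphism, so $X$ is a topo-isomorphic extension of $\overleftarrow{G}$; and $\eta$ is irregular by construction. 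This would complete the proof of Theorem~\ref{theo:main1}.
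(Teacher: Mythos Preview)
Your outline has the right architecture, but the step you treat as a routine verification is where the whole difficulty lives. The assertion $\mu\bigl(\{x:x(g)=F_{n,g}(\pi(x))\}\bigr)\to 1$ cannot be read off from $\mu_m\to\mu$: the set is defined through $\pi$, which exists only on $X=\overline{O_\sigma(\eta)}$, whereas each $\mu_m$ is supported on a finite periodic orbit disjoint from $X$; and even rewritten via the clopen pieces $\sigma^{v}C_n$, the set is not a cylinder in $\{0,1\}^G$, so weak-$*$ convergence gives no control. Unpacked, your claim says that at the aperiodic positions $x$ almost surely shows the fill symbol, i.e.\ that $\mu\bigl(\bigcup_{v\in D_n}\sigma^{v^{-1}}C_n^{0}\bigr)\to 1$. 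This is \emph{false} for general irregular Toeplitz subshifts (there are irregular Toeplitz systems with several ergodic measures, for which $\pi$ is not a measure isomorphism), so it cannot follow from the hypotheses you list; it has to come from the specific combinatorics of $\eta$, and your proposal gives no mechanism for it. The separate appeal to unique ergodicity via ``convergence of empirical measures along $D_n$'' is in the same position: it is the hard point, not a side condition one arranges.

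The paper closes the gap in the opposite order. For the explicit $\eta$ of Section~\ref{sec:irregular-construction} it first proves a rigidity statement (Proposition~\ref{at-least} and Corollary~\ref{equals-partitions}): \emph{every} invariant measure gives the same mass to each atom of the partitions $\mathcal{P}_n=\{\sigma^{v^{-1}}C_n^{i}:v\in D_n,\ i\in\{0,1\}\}$. Then the periodic measures are used only on the genuine cylinders $U_n\subseteq\{0,1\}^G$ (Lemma~\ref{U'ns}), and a chain of combinatorial inclusions $U_n\cap X\subseteq Y_n$, $\bigcup_v\sigma^{v^{-1}}Y_n\subseteq Z_n$ (Lemmas~\ref{Containing U_n}--\ref{Measure-1}) transfers this to $\mu'(Z_{t_j})\to 1$ for \emph{all} $\mu'$. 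A Borel--Cantelli argument (Lemmas~\ref{V's}--\ref{Measure-Inter-12}) then yields a single invariant Borel set $A$ with $\mu'(A)=1$ for every $\mu'$ on which $\pi$ is injective (Lemma~\ref{1-1}). Unique ergodicity is a \emph{consequence}: all $\mu'$ push forward to Haar and are recovered from it through the same bimeasurable inverse $\pi|_A^{-1}$, hence coincide. Your $L^1$-Cauchy observation about the $F_{n,g}$ is correct and pleasant, but it only becomes a proof once the analogue of Lemma~\ref{Measure-1} is in hand.
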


Corollary \ref{coro:main2} follows immediately from Theorem \ref{theo:main1} and \cite[Theorem 1.1]{FuGrLe22}.

\begin{corollary}\label{coro:main2}
    Let $G$ be an amenable countable residually finite group and let $\overleftarrow{G}$ be a totally disconnected metric compactification of $G$. Then, there exists an irregular mean-e\-qui\-con\-ti\-\-nuous Toeplitz subshift whose maximal e\-qui\-con\-ti\-\-nuous factor is $\overleftarrow{G}$.
\end{corollary}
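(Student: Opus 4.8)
The plan is to derive the corollary directly from Theorem \ref{theo:main1} together with the characterization of mean-equicontinuity established in \cite[Theorem 1.1]{FuGrLe22}, so the argument is short. First I would apply Theorem \ref{theo:main1} to the given amenable residually finite group $G$ and the chosen totally disconnected metric compactification $\overleftarrow{G}$; this produces an \emph{irregular} Toeplitz element $\eta\in\{0,1\}^G$ such that the subshift $X:=\overline{O_\sigma(\eta)}$ has maximal equicontinuous factor $\overleftarrow{G}$ and the canonical factor map $X\to\overleftarrow{G}$ is a topo-isomorphism.

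Next I would invoke \cite[Theorem 1.1]{FuGrLe22}, which asserts that for a minimal dynamical system over an amenable group, being a topo-isomorphic extension of its maximal equicontinuous factor is equivalent to being (Weyl) mean-equicontinuous. The system $X$ is minimal because it is a Toeplitz subshift (a minimal almost $1$-$1$ extension of a $G$-odometer), and $G$ is amenable by hypothesis, so the cited theorem applies and yields that $X$ is mean-equicontinuous. Assembling the pieces, $X$ is an irregular mean-equicontinuous Toeplitz subshift whose maximal equicontinuous factor is $\overleftarrow{G}$, which is precisely the claim.

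There is no genuine obstacle here: the only points needing a remark — rather than real work — are checking that the hypotheses of \cite[Theorem 1.1]{FuGrLe22} are satisfied (minimality of $X$ and amenability of $G$, both immediate) and observing that the adjective ``irregular'' transfers verbatim from $\eta$ to the associated subshift. The entire substantive content of the corollary is carried by Theorem \ref{theo:main1}, whose construction (via Proposition \ref{Supp-mu} and the explicit irregular Toeplitz array) is where the difficulty lies.
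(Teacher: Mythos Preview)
Your proposal is correct and matches the paper's own proof essentially verbatim: invoke Theorem~\ref{theo:main1} to obtain the irregular topo-isomorphic Toeplitz extension, then apply Theorem~\ref{FuGrLe} (i.e.\ \cite[Theorem~1.1]{FuGrLe22}) to conclude mean-equicontinuity. The only minor remark is that Theorem~\ref{FuGrLe} as stated does not require minimality, so that check is superfluous (though harmless).
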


The structure of this document is as follows: In Section \ref{sec:basicdef}, we provide basic facts related to topological dynamical systems and measure theory, along with a brief introduction to residually finite groups, $G$-odometers, and Toeplitz subshifts. 
In Section \ref{Sectio-3}, we introduce a new representation of the set of periods of some Toeplitz arrays, which is essential for the proof of Proposition \ref{Supp-mu} at the end of that Section. 
This proposition guarantees that the irregular Toeplitz array $\eta$ defined in Section \ref{sec:irregular-construction} generates a Toeplitz subshift having at least one invariant probability measure. 
In the last part of Section \ref{sec:irregular-construction}, we define a map from the set of invariant measures of the Toeplitz subshift defined in Section \ref{sec:irregular-construction} to a one-point set, as stated in Proposition \ref{at-least}. 
This map plays a crucial role in the proof of Theorem \ref{theo:main1}, which is presented in Section \ref{sec:Topo}, where we guarantee that there exists an isomorphisms between the Toeplitz subshift previously constructed and its maximal equicontinuous factor and finally, we prove that the subshift is uniquely ergodic.
\section{Preliminaries}\label{sec:basicdef}

\subsection{Topological dynamical systems}\label{subsec:defrfgroups}
In this article,  $G$ always refers to  a countable discrete infinite group, where $1_G$ denotes the neutral element of $G$. We define a {\it topological dynamical system} (or {\it dynamical system}) $(X,\sigma, G)$ as a system where $\sigma$ is a left continuous action of $G$ on a compact metric space $X$. The dynamical system $(X,\sigma, G)$ is {\it minimal} if for every $x\in X$, its orbit $O_\sigma(x)=\{\sigma^gx: g\in G\}$ is dense in $X$. The dynamical system is {\it e\-qui\-con\-ti\-\-nuous} if the collection of maps $\{\sigma^g\}_{g\in G}$ is e\-qui\-con\-ti\-\-nuous.
A topological dynamical system $(X,\sigma,G)$ is called {\it free} if $\sigma^gx=x$ implies $g=1_G$ for every $x\in X$.

An {\it invariant measure} of the topological dynamical system $(X,\sigma,G)$ is 
a probability measure $\mu$ defined on the Borel sigma-algebra of $X$, satisfying the condition that for every $g\in G$ and for every Borel set $A\subseteq X$, we have $\mu(A)=\mu(\sigma^g A)$.
A Borel set is called {\it invariant} if $\sigma^g A=A$ for every $g\in G$. 
An invariant measure $\mu$ is said to be \textit{ergodic} if $\mu(A)\in\{0,1\}$ for every invariant set $A\subseteq X$. The set of invariant measures of $(X,\sigma,G)$ is denoted $M_G(X)$, which is a metrizable Choquet simplex with extreme points the ergodic measures of $(X,\sigma, G)$ (see for instance \cite{CoPe14}).
When the system has a unique invariant measure, it is called {\it uniquely ergodic}. 
Using \cite[Theorem 3.6]{Au88} and \cite[Proposition 8.1]{BeOh07} one can prove that every equicontinuous minimal dynamical system is uniquely ergodic. If $\mu$ is an invariant measure of $(X,\sigma,G)$, then $(X,\sigma,G,\mu)$ is referred as a {\it probability measure-preserving} dynamical system.

Let $(X,\sigma, G)$ and $(Y,\phi,G)$ be two topological dynamical systems. A continuous surjective map $\pi: X\to Y$ is called a {\it factor map} if $\pi(\sigma^g x)=\phi^g\pi(x)$, for every $x\in X$ and every $g\in G$.  In this case, we say that $(X,\sigma, G)$ is an {\it extension} of $(Y,\phi,G)$ and $(Y,\phi, G)$ is a {\it factor} of $(X,\sigma,G)$. 
Given a factor map $\pi$ from $(X,\sigma,G)$ to $(Y,\phi,G)$ and $\mu$ being an invariant measure of $(X,\sigma,G)$, there exists an invariant measure of $(Y,\phi,G)$ associated to $\mu$  called the  {\it pushforward measure} of $\mu$ by $\pi$. It is defined as $\pi(\mu)(B)=\mu(\pi^{-1}(B))$ for every $B\subseteq Y$ Borel set. 
A factor map $\pi$ between $(X,\sigma, G)$ and $(Y,\phi, G)$ is said to be {\it almost one to one} (or {\it almost $1$-$1$}) if the set 
$\{y\in Y: |\pi^{-1}(\{y\})|=1\}\subseteq Y$ is residual.  We say that  $(X,\sigma,G)$ is an {\it almost 1-1 extension} of $(Y,\phi,G)$. 
Recall that when the system $(Y,\phi, G)$ is minimal, being {\it almost one to one} is equivalent to the existence of $y\in Y$ such that $|\pi^{-1}\{y\}|=1$ (see \cite{Ve70}).

An e\-qui\-con\-ti\-\-nuous system $(Y,\phi,G)$ is the {\it maximal e\-qui\-con\-ti\-\-nuous factor} of a system $(X,\sigma,G)$ if there is a factor map $\pi: X\to Y$ such that for every other factor map $f: X\to Y'$, where $(Y',\phi',G)$ is an e\-qui\-con\-ti\-\-nuous system, there exists a factor map $\overline{f}: Y\to Y'$ such that $f=\overline{f}\circ \pi$.
It is well known that if $(X,\sigma,G)$ is a minimal almost $1$-$1$ extension of a minimal e\-qui\-con\-ti\-\-nuous system $(Y,\phi,G)$, then $(Y,\phi,G)$ is the maximal e\-qui\-con\-ti\-\-nuous factor of $(X,\sigma,G)$ (see \cite{Kr10}).

Two probability measure-preserving dynamical systems $(X,\sigma,G,\mu)$ and $(Y,\phi,G,\nu)$ are {\it measure conjugate }if there exists a factor map $h: X\to Y$, and invariant conull sets $X'\subseteq X$ and $Y'\subseteq Y$ such that  $h|_{X'}: X'\to Y'$ is a  bimeasurable bijection, and $\nu=h|_{X'}(\mu)$. We say that $h|_{X'}$ is a {\it measure conjugacy $(\bmod\; 0)$} and $h$ is called a {\it measure conjugacy of $\mu$}. A dynamical system $(X,\sigma,G)$ is a {\it topo-isomorphic extension} of $(Y,\phi,G)$ if there is a factor map $h: X\to Y$ such that $h$ is a measure conjugacy of $\mu$ for every $\mu$ invariant measure of $(X,\sigma, G)$. In this case, $h$ is called a {\it topo-isomorphism}.

A group $G$ is said to be {\it amenable} if there exists a sequence $(F_n)_{n\in\mathbb{N}}$ of non-empty finite subsets of $G$, called a {\it (left) F\o lner sequence}, verifying
\begin{align*}
    \lim_{n\to\infty} \dfrac{|gF_n\triangle F_n|}{|F_n|}=0\; \mbox{ for all }\; g\in G,
\end{align*}
where $|\cdot|$ denotes the cardinal of a set and $\triangle$ represents the symmetric difference. 
This definition of amenable group is equivalent to the existence of invariant probability measures for every topological dynamical system $(X,\phi,G)$ (See \cite{CC10} for a more detailed presentation).
If $G$ is an amenable group, a dynamical system $(X,\sigma,G)$ is {\it mean-e\-qui\-con\-ti\-\-nuous} if for all $\varepsilon>0$ there exists $\delta_\varepsilon>0$ such that for every $x,z\in X$ with $d(x,z)<\delta_\varepsilon$ we have
\begin{align*}
    \sup\{\limsup_{n\to\infty}\dfrac{1}{|F_n|}\sum_{g\in F_n} d(\sigma^g x,\sigma^g z):\mathcal{F}=(F_n)_{n\in\mathbb{N}}\mbox{ is a F\o lner sequence}\}<\varepsilon,
\end{align*}
where $d$ denotes a metric of $X$. 
The following Theorem states the relation between mean-equicontinuity and topo-isomorphism. 
This Theorem holds in a broader context where $G$ is not necessarily a countable discrete group.
\begin{theorem}[{\cite[Theorem 1.1]{FuGrLe22}}]\label{FuGrLe}
    Let $G$ be a locally compact sigma-compact amenable group. A topological dynamical system $(X,\sigma, G)$ is mean-e\-qui\-con\-ti\-\-nuous if and only if it is a topo-isomorphic extension of its maximal e\-qui\-con\-ti\-\-nuous factor.
\end{theorem}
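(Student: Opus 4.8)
The plan is to compare the canonical factor map $\pi\colon X\to X_{\mathrm{eq}}$ onto the maximal e\-qui\-con\-ti\-nuous factor with the quotient of $X$ by the \emph{Weyl pseudometric} $D_W(x,z)=\sup_{\mathcal F}\limsup_n |F_n|^{-1}\sum_{g\in F_n} d(\sigma^g x,\sigma^g z)$, the supremum being over F\o lner sequences $\mathcal F=(F_n)_n$. Two observations, valid for any system, set the stage. First, $D_W$ is a $G$-invariant pseudometric (a right translate of a F\o lner sequence is again F\o lner). Second, $D_W(x,z)=0$ forces $\pi(x)=\pi(z)$: fix a $G$-invariant compatible metric $d_{\mathrm{eq}}$ on $X_{\mathrm{eq}}$ (average any metric over the Ellis group, which is a compact group for an e\-qui\-con\-ti\-nuous system); if $\pi(x)\neq\pi(z)$ then $d_{\mathrm{eq}}(\pi\sigma^g x,\pi\sigma^g z)$ equals a constant $c>0$ in $g$, and uniform continuity of $\pi$ upgrades this to a uniform lower bound $d(\sigma^g x,\sigma^g z)\geq\delta(c)>0$, whence $D_W(x,z)\geq\delta(c)$. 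Consequently $\pi$ always factors as $\pi=\bar\pi\circ\rho_W$, where $\rho_W\colon X\to X_W:=(X/{\sim_{D_W}},D_W)$ is the quotient.

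For the implication ``mean-e\-qui\-con\-ti\-nuous $\Rightarrow$ topo-isomorphic extension'', note that mean-e\-qui\-con\-ti\-nuity says exactly that $\rho_W$ is (uniformly) continuous; since $X$ is compact and $G$ acts on $X_W$ by isometries, $X_W$ is a compact metric e\-qui\-con\-ti\-nuous factor of $X$. By the universal property of $X_{\mathrm{eq}}$ one has $\rho_W=q\circ\pi$ for a factor map $q$, and combined with $\pi=\bar\pi\circ\rho_W$ and surjectivity of $\pi,\rho_W$ this forces $q$ and $\bar\pi$ to be mutually inverse; hence $X_W\cong X_{\mathrm{eq}}$ and $\pi(x)=\pi(z)\iff D_W(x,z)=0$. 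Now fix an invariant measure $\mu$ and consider the relatively independent self-joining $\lambda=\mu\times_{X_{\mathrm{eq}}}\mu$ of $\mu$ over $\pi$; it is supported on $R_\pi:=\{(x,z)\in X\times X:\pi(x)=\pi(z)\}$. Decompose $\lambda$ into ergodic components (each still carried by $R_\pi$) and apply Lindenstrauss' pointwise ergodic theorem along a tempered F\o lner sequence $(F_n)$ to the continuous function $(x,z)\mapsto d(x,z)$: on each component the averages $|F_n|^{-1}\sum_{g\in F_n}d(\sigma^g x,\sigma^g z)$ converge a.e. to $\int d\,d\lambda^{(\omega)}$, but for a.e.\ point $\pi(x)=\pi(z)$, hence $D_W(x,z)=0$, hence that $\limsup$ vanishes, so the integral is $0$ and the component sits on the diagonal. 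Thus $\lambda=\Delta_*\mu$, the disintegration of $\mu$ over $\pi$ is a.e.\ a point mass, and $\pi$ is a measure conjugacy of $\mu$; as $\mu$ was arbitrary, $\pi$ is a topo-isomorphism.

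For the converse, suppose $\pi$ is a topo-isomorphism but $X$ is not mean-e\-qui\-con\-ti\-nuous. Then there are $\varepsilon_0>0$ and points $x_n,z_n$ with $d(x_n,z_n)\to 0$ and finite F\o lner sets $E_n$ — chosen, passing to a subsequence, so that $(E_n)_n$ is itself a F\o lner sequence — with $|E_n|^{-1}\sum_{g\in E_n}d(\sigma^g x_n,\sigma^g z_n)>\varepsilon_0$. Let $\lambda$ be a weak-$*$ limit of the empirical measures $|E_n|^{-1}\sum_{g\in E_n}\delta_{(\sigma^g x_n,\sigma^g z_n)}$ on $X\times X$. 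Then $\lambda$ is $G$-invariant (F\o lner), satisfies $\int d\,d\lambda\geq\varepsilon_0$, and is supported on $R_\pi$: since $d(x_n,z_n)\to 0$ and $X_{\mathrm{eq}}$ is e\-qui\-con\-ti\-nuous, $d_{\mathrm{eq}}(\pi\sigma^g x_n,\pi\sigma^g z_n)\to 0$ uniformly in $g$, so $\int d_{\mathrm{eq}}(\pi(\cdot),\pi(\cdot))\,d\lambda=0$. Finally, every invariant measure $\lambda$ on $R_\pi$ must be the diagonal measure when $\pi$ is a topo-isomorphism: its marginals $\mu_1,\mu_2$ on $X$ have a common pushforward $\rho$ on $X_{\mathrm{eq}}$, and applying the defining property of ``topo-isomorphism'' to the invariant measure $\tfrac12(\mu_1+\mu_2)$ produces an invariant conull set $X'\subseteq X$ on which $\pi$ is injective, with $\mu_1(X')=\mu_2(X')=1$, hence $\lambda(X'\times X')=1$; on $X'\times X'$ the condition $\pi(x)=\pi(z)$ forces $x=z$, so $\lambda$ lives on the diagonal, contradicting $\int d\,d\lambda\geq\varepsilon_0>0$.

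The main obstacle I expect lies in making rigorous the identification $X_W\cong X_{\mathrm{eq}}$ — i.e.\ checking that under mean-e\-qui\-con\-ti\-nuity the pseudometric quotient $X_W$ is genuinely a compact metric e\-qui\-con\-ti\-nuous \emph{factor} and then untangling the two factorizations $q,\bar\pi$ — and in the correct deployment of the pointwise ergodic theorem over a locally compact $\sigma$-compact amenable group (one needs a tempered F\o lner sequence, and must observe that the relevant $\limsup$ is dominated by $D_W$, which is immediate from the definition). The passage from ``$\pi$ is a topo-isomorphism'' to ``every invariant measure on $R_\pi$ is diagonal'' is elementary, but it is precisely the step where the hypothesis on \emph{all} invariant measures, not only the ergodic ones, is indispensable.
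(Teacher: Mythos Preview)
The paper does not prove this statement at all: Theorem~\ref{FuGrLe} is quoted verbatim from \cite[Theorem~1.1]{FuGrLe22} in the preliminaries section and is used only as a black box (specifically, to derive Corollary~\ref{coro:main2} from Theorem~\ref{theo:main1}). There is therefore no ``paper's own proof'' to compare your proposal against.

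That said, your sketch is a plausible outline of the argument in \cite{FuGrLe22} itself, and the obstacles you flag (that $X_W$ is genuinely a compact metric equicontinuous factor, and the correct invocation of Lindenstrauss' pointwise ergodic theorem along a tempered F\o lner sequence) are indeed the places where care is needed. One point worth tightening in the converse direction: when you pass to a weak-$*$ limit of the empirical measures $|E_n|^{-1}\sum_{g\in E_n}\delta_{(\sigma^g x_n,\sigma^g z_n)}$, the claim that the limit is $G$-invariant requires that $(E_n)_n$ be a F\o lner sequence, but you are simultaneously choosing $E_n$ depending on $x_n,z_n$; you should spell out why one can arrange both the F\o lner condition and the lower bound on the averages along a single sequence (a diagonal argument suffices, but it deserves a sentence). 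None of this, however, bears on the present paper, which simply imports the result.
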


\subsection{$G$-odometers and residually finite groups}

In this subsection, we provide definitions and basic properties of $G$-odometers and residually finite groups. For a more comprehensive presentation of these topics, we refer to \cite{CoPe08} and \cite{Kr10}.

We say that $G$ is {\it residually finite}, if there exists a decreasing sequence $(\Gamma_n)_{n\in\mathbb{N}}$ of finite index subgroups of $G$ with trivial intersection. We can assume that $\Gamma_n$ is normal for every $n\in\mathbb{N}$ (see \cite{CC10}).
Let $G$ be an infinite residually finite group and let $(\Gamma_n)_{n\in\mathbb{N
}}$ be a decreasing sequence of finite index subgroups of $G$ with trivial intersection. 
We define the {\it $G$-odometer associated to $(\Gamma_n)_{n\in\mathbb{N}}$}, denoted as $\overleftarrow{G}$, as the inverse limit of the inverse system given by $(G/\Gamma_n,\varphi_n)$, where $\varphi_n\colon G/\Gamma_{n+1}\to G/\Gamma_n$ is the canonical map given by $\Gamma_{n+1}\subseteq \Gamma_n$, i.e.,
\begin{align*}
    \overleftarrow{G}:=\varprojlim(G/\Gamma_n,\varphi_n)=\{(x_n)_{n\in\mathbb{N}}\in\prod_{n\in\mathbb{N}} G/\Gamma_n\mid x_n=\varphi_n(x_{n+1}) \mbox{ for every }n\in \mathbb{N}\}.
\end{align*}
 $\overleftarrow{G}$ has the induced topology of $\prod_{n\in\mathbb{N}}G/\Gamma_n$, where each $G/\Gamma_n$ is endowed with the discrete topology.
In this scenario, $\overleftarrow{G}$ is a Cantor set.
There is also a canonical continuous action $\phi$ of $G$ on $\overleftarrow{G}$ given by the left coordinate-wise multiplication. 
The dynamical system $(\overleftarrow{G},\phi,G)$ is an e\-qui\-con\-ti\-\-nuous, minimal Cantor system, and hence, uniquely ergodic with measure $\nu$. When every $\Gamma_n$ is normal, $\overleftarrow{G}$ is a subgroup of $\prod_{n\in\mathbb{N}}G/\Gamma_n$, and $G$ is identified with a dense subgroup of $\overleftarrow{G}$. In this case, the left Haar  measure $\nu$ is the unique invariant measure of the system.

A {\it totally disconnected metric compactification of $G$} is a totally disconnected compact group 
for which an injective homomorphism  $\iota: G\to\overleftarrow{G}$ exists and its image is dense in $\overleftarrow{G}$. 
Note that every $G$-odometer having a group structure is a totally disconnected metric compactification of $G$, and conversely, every totally disconnected metric compactification of $G$ is a $G$-odometer (see \cite[Lemma 2.1]{CeCoGo23}).

\subsection{Toeplitz subshifts}\label{subsec:deftoeplitzodometers}
The definitions and results written in this subsection can be found in \cite{CoPe08}. 
 Let  $\Sigma$ be a finite set with $|\Sigma|\geq 2$. The set $$\Sigma^G=\{x=(x(g))_{g\in G}: x(g)\in \Sigma, \mbox{ for every } g\in G\}$$  is a Cantor set if  $\Sigma$ has the discrete topology  and $\Sigma^G$ is endowed with the product topology. 
 We can associate with $\Sigma^G$ the continuous action known as {\it (left) shift action} $\sigma$ of $G$ given by
 \begin{align*}
     \sigma^g x(h)=x(g^{-1}h)\mbox{ for every }h,g\in G\mbox{ and }x\in \Sigma^G.
 \end{align*}
 
 The topological dynamical system $(\Sigma^G, \sigma, G)$ is known as a {\it full G-shift}. 
 A subset $X\subseteq \Sigma^G$ is a {\it subshift} of $\Sigma^G$ if it is closed and invariant.
 The topological dynamical system $(X, \sigma|_X,G)$ is also called a {\it subshift}.
 Let $x\in \Sigma^G$ and $\Gamma\subseteq G$ be a subgroup of finite index. We define  
\begin{align*}
    \Per(x,\Gamma,\alpha)&=\{g\in G: x(\gamma g)=\alpha \mbox{ for every }\gamma\in\Gamma\}, \mbox{ for every } \alpha\in \Sigma.\\
    \Per(x,\Gamma)&=\bigcup_{\alpha\in \Sigma}\Per(x,\Gamma,\alpha).
\end{align*}

An element $\eta\in \Sigma^G$ is called a \textit{Toeplitz array} if for every $g\in G$ there exists a finite index subgroup $\Gamma$ of $G$ such that $g\in \Per(\eta,\Gamma)$. The finite index subgroup $\Gamma$ is a {\it group of periods} of $\eta$ if $\Per(\eta,\Gamma)\neq \emptyset$. A group of periods $\Gamma$ of $\eta$ is an {\it essential group of periods} of $\eta$ if $\Per(\eta,\Gamma,\alpha)\subseteq g\Per(\eta,g^{-1}\Gamma g,\alpha)=\Per(\sigma^g\eta,\Gamma,\alpha)$ for every $\alpha\in \Sigma$ implies $g\in \Gamma$.
 
  Let $\eta\in \Sigma^G$ be a Toeplitz array. It is possible to construct  a {\it period structure}  $(\Gamma_n)_{n\in\mathbb{N}}$ of $\eta$, i.e., a decreasing sequence $(\Gamma_n)_{n\in\mathbb{N}}$ of finite index subgroups of $G$ such that $\Gamma_n$ is an essential group of periods of $\eta$ for every $n\in\mathbb{N}$ and $G=\bigcup_{n\in\mathbb{N}}\Gamma_n$ (see \cite[Corollary 6]{CoPe08}).

A subshift $X$ is called  a {\it Toeplitz subshift}, if there exists a Toeplitz array $\eta\in\Sigma^G$  such that $X=\overline{O_\sigma(\eta)}$. Recall that these dynamical systems are minimal (see \cite{CoPe08}).
Let $\eta\in \Sigma^G$ be a Toeplitz array, and let $(\Gamma_n)_{n\in\mathbb{N}}$ be a period structure of $\eta$. Let $\overline{O_{\sigma}(\eta)}$ be its associated Toeplitz subshift.
For each $n\in\mathbb{N}$, we define
 $$C_n=\{ x\in \overline{O_\sigma(\eta)}:  \Per(x,\Gamma_n, \alpha)=\Per(\eta, \Gamma_n, \alpha), \mbox{ for every } \alpha\in \Sigma\}.$$
Lemma 2.5 in \cite{CeCoGo23} provides that for every $n\in \mathbb{N}$ and for all $g,h\in G$, we have that $\sigma^g C_n=\sigma^h C_n$ if and only if $g\Gamma_n=h\Gamma_n$, and consequently, $\{\sigma^{g^{-1}} C_n:g\in D_n\}$ is a clopen partition of $\overline{O_\sigma(\eta)}$.
The stabilizer of $x\in \overline{O_\sigma(\eta)}$ is given by $\bigcap_{n\in\mathbb{N}} v_n\Gamma_n v_n^{-1}$, where $v_n\in G$ is such that $x\in \sigma^{v_n}C_n$. When  $\Gamma_n$ is normal for every $n\in\mathbb{N}$,  the Toeplitz subshift $(\overline{O_\sigma(\eta)},\sigma, G)$ is free if and only if $\bigcap_{n\in\mathbb{N}}\Gamma_n=\{1_G\}$.  

The relationship between Toeplitz subshifts and countable residually finite groups is given by the following statement:  there exists a free Toeplitz subshift or a free $G$-odometer if and only if $G$ is residually finite (see for example \cite{CoPe08} and \cite{Kr10}). 

From now on, let $G$ denote a countable discrete infinite residually finite group, and let $(\Gamma_n)_{n\in\mathbb{N}}$ denote a decreasing sequence of normal subgroups of finite index of $G$ with trivial intersection.

The relation of Toeplitz subshifts and $G$-odometers is represented in the following Proposition. 

\begin{proposition}[\cite{CoPe08}]\label{pimap}
    Let $\eta\in\Sigma^G$ be a Toeplitz array and let $\overline{O_\sigma(\eta)}$ be the Toeplitz subshift associated to $\eta$. Suppose that $(\Gamma_n)_{n\in\mathbb{N}}$ is a period structure of $\eta$. 
    Let $\overleftarrow{G}$ be the $G$-odometer associated to $(\Gamma_n)_{n\in\mathbb{N}}$. 
    Then, the map $\pi:\overline{O_\sigma(\eta)}\to \overleftarrow{G}$ defined by
        $\pi(x)=(g_n\Gamma_n)_{n\in\mathbb{N}}$, where $x\in\sigma^{g_n}C_n$ for every $n\in\mathbb{N}$,
    is an almost $1$-$1$ factor map. Consequently, $\overleftarrow{G}$ is the maximal e\-qui\-con\-ti\-\-nuous factor of $\overline{O_\sigma(\eta)}$. Moreover, 
    \begin{align*}
        \mathcal{T}=\{x\in\overline{O_\sigma(\eta)}: x\mbox{ is a Toeplitz array}\}=\pi^{-1}\{y\in\overleftarrow{G}: |\pi^{-1}\{y\}|=1\}.
    \end{align*}
\end{proposition}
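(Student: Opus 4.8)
The plan is to verify the ``soft'' assertions --- that $\pi$ is a well-defined continuous $G$-equivariant surjection, hence a factor map --- by bookkeeping with the clopen partitions $\{\sigma^{g^{-1}}C_n:g\in D_n\}$, and then to extract the real content, namely the almost $1$-$1$ property and the description of $\mathcal{T}$, from an analysis of the fibres $\pi^{-1}(\pi(x))=\bigcap_n\sigma^{g_n}C_n$.

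For the soft part: fixing $x$ and $n$, \cite[Lemma 2.5]{CeCoGo23} gives a unique coset $g_n\Gamma_n$ with $x\in\sigma^{g_n}C_n$, equivalently (using normality of $\Gamma_n$) $\Per(x,\Gamma_n,\alpha)=g_n\Per(\eta,\Gamma_n,\alpha)$ for every $\alpha$, so each coordinate of $\pi$ is well defined. For coherence I would first establish $C_{n+1}\subseteq C_n$: if $x\in C_{n+1}$ then $x$ agrees with $\eta$ on $\Per(\eta,\Gamma_{n+1})\supseteq\Per(\eta,\Gamma_n)$, which yields $\Per(\eta,\Gamma_n,\alpha)\subseteq\Per(x,\Gamma_n,\alpha)$; writing $x\in\sigma^{h_n}C_n$, finiteness of $G/\Gamma_n$ upgrades the resulting inclusion $\Per(\eta,\Gamma_n,\alpha)\subseteq h_n\Per(\eta,\Gamma_n,\alpha)$ to an equality for each $\alpha$, and essentiality of $\Gamma_n$ then forces $h_n\in\Gamma_n$, i.e. $x\in C_n$. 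Hence $\sigma^{g_{n+1}}C_{n+1}\subseteq\sigma^{g_{n+1}}C_n$, and since $x$ lies in this set and in $\sigma^{g_n}C_n$, the partition property gives $g_{n+1}\Gamma_n=g_n\Gamma_n=\varphi_n(g_{n+1}\Gamma_{n+1})$, so $\pi(x)\in\overleftarrow{G}$. Continuity of $\pi$ is then clear (each coordinate is constant on the pieces of a clopen partition), $G$-equivariance follows from $\sigma^g(\sigma^{g_n}C_n)=\sigma^{gg_n}C_n$, and since $\eta\in C_n$ for every $n$ we get $\pi(\eta)=(1_G\Gamma_n)_n$, so $\pi(\overline{O_\sigma(\eta)})$ is a closed $G$-invariant subset of $\overleftarrow{G}$ containing the dense orbit $\{(g\Gamma_n)_n:g\in G\}$ and hence equals $\overleftarrow{G}$.

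For the fibres, note that if $z\in\pi^{-1}(\pi(x))$ then $z\in\sigma^{g_n}C_n$ for all $n$, so $z(g_nh)=\alpha$ whenever $h\in\Per(\eta,\Gamma_n,\alpha)$; thus every element of $\pi^{-1}(\pi(x))$ agrees with $x$ on $B_x:=\bigcup_n g_n\Per(\eta,\Gamma_n)=\bigcup_n\Per(x,\Gamma_n)$. In particular $\pi^{-1}(\pi(x))=\{x\}$ whenever $B_x=G$; applied to $x=\eta$, where $B_\eta=\bigcup_n\Per(\eta,\Gamma_n)=G$ because $(\Gamma_n)$ is a period structure (\cite[Corollary 6]{CoPe08}), this gives $\pi^{-1}(\pi(\eta))=\{\eta\}$, so $\pi$ is almost $1$-$1$ by minimality of $\overleftarrow{G}$ (\cite{Ve70}), and therefore $\overleftarrow{G}$ is the maximal equicontinuous factor of $\overline{O_\sigma(\eta)}$ (\cite{Kr10}). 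Conversely, if $g_0\notin B_x$, then for each $n$ one may choose $\gamma_n\in\Gamma_n$ with $x(\gamma_ng_0)\neq x(g_0)$; since $\Sigma$ is finite, some limit point $z$ of $(\sigma^{\gamma_n^{-1}}x)_n$ has $z(g_0)\neq x(g_0)$, while $\pi(z)=\pi(x)$ because $\gamma_n^{-1}\in\Gamma_n$ alters the odometer image only at levels greater than $n$; hence $|\pi^{-1}(\pi(x))|\geq 2$. Thus $|\pi^{-1}(\pi(x))|=1$ exactly when $B_x=G$, and the latter says precisely that every $g\in G$ belongs to some $\Per(x,\Gamma_n)$, i.e. $x$ is a Toeplitz array; this proves $\pi^{-1}\{y:|\pi^{-1}(y)|=1\}\subseteq\mathcal{T}$.

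For the reverse inclusion, let $x\in\mathcal{T}$. It admits its own period structure $(\Delta_m)_m$ (\cite[Corollary 6]{CoPe08}) and $\overline{O_\sigma(x)}=\overline{O_\sigma(\eta)}$; running the construction above for $x$ and $(\Delta_m)$ yields a factor map $\pi_x$ from $\overline{O_\sigma(\eta)}$ onto the $G$-odometer attached to $(\Delta_m)$ with $\pi_x^{-1}(\pi_x(x))=\{x\}$ (as $\bigcup_m\Per(x,\Delta_m)=G$), so by uniqueness of the maximal equicontinuous factor there is a $G$-isomorphism $\theta$ with $\pi_x=\theta\circ\pi$, whence $\pi^{-1}(\pi(x))=\pi_x^{-1}(\pi_x(x))=\{x\}$ and $x\in\pi^{-1}\{y:|\pi^{-1}(y)|=1\}$. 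I expect two points to require care: the inclusion $C_{n+1}\subseteq C_n$, which is where essentiality of the groups of periods is genuinely used, and the limit construction in the previous paragraph, which is the only non-formal step; one must also check that this last paragraph is not circular, but it uses only the factor-map and almost $1$-$1$ statements, both of which rest solely on the elementary fact that a fibre collapses to a point as soon as $B_x=G$.
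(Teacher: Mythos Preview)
The paper does not give its own proof of this proposition; it is quoted verbatim from \cite{CoPe08} and used as a black box. There is therefore no ``paper's proof'' to compare against, only the original Cortez--Petite argument.

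Your reconstruction is correct and is essentially the standard route. A few remarks:

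\begin{itemize}
\item The coherence step $C_{n+1}\subseteq C_n$ is exactly where essentiality enters, and your argument is right: with normality, the inclusion $\Per(\eta,\Gamma_n,\alpha)\subseteq h_n\Per(\eta,\Gamma_n,\alpha)$ between two unions of the same finite number of $\Gamma_n$-cosets forces equality, and then the definition of essential period group gives $h_n\in\Gamma_n$. (Normality is a standing assumption in this paper, stated just before the proposition, so you may use it freely.)
\item The limit construction for $g_0\notin B_x$ is fine; since $\Sigma$ is finite you first pass to a subsequence along which $\sigma^{\gamma_n^{-1}}x(g_0)$ is constant ($\neq x(g_0)$), then to a convergent subsequence, and continuity of $\pi$ together with $\gamma_n\in\Gamma_n\subseteq\Gamma_m$ for $n\geq m$ gives $\pi(z)=\pi(x)$.
\item Your treatment of the reverse inclusion $\mathcal{T}\subseteq\pi^{-1}\{y:|\pi^{-1}(y)|=1\}$ via the universal property of the maximal equicontinuous factor is a clean way to sidestep the question of whether every period group of $x$ contains some $\Gamma_n$. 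The point you need is that the mediating map $\theta$ is an \emph{isomorphism}: apply the universal property in both directions to get $\theta,\theta'$ with $\pi_x=\theta\circ\pi$ and $\pi=\theta'\circ\pi_x$, and then surjectivity of $\pi$ (resp.\ $\pi_x$) forces $\theta'\circ\theta=\id$ (resp.\ $\theta\circ\theta'=\id$). This is implicit in what you wrote, but worth making explicit. There is no circularity: this paragraph invokes only the factor-map and almost $1$-$1$ parts of the proposition, applied to $x$ in place of $\eta$.
\end{itemize}

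In short, your proof is correct and would serve as a self-contained substitute for the citation.
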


The following Proposition is considered as the  non-amenable version of \cite[Lemma 5]{CoPe14} and it plays an important role in the development of this paper.
\begin{proposition}[{\cite[Lemma 2.8]{CeCoGo23}}]\label{decom}
There exist an increasing sequence $(t_i)_{i\in \mathbb{N}}\subseteq \mathbb{N}$ and a sequence $(D_i)_{i\in \mathbb{N}}$ of finite subsets of $G$ such that for every $i\in \mathbb{N}$,
\begin{enumerate}
    \item $\{1_G\}\subseteq D_i\subseteq D_{i+1}$.
    \item $D_i$ is a fundamental domain of $G/\Gamma_{t_i}$, i.e., $D_i$ contains a unique element of every coset in $G/\Gamma_{t_i}$.
    \item $G=\bigcup_{i=1}^\infty D_i$.
    \item $D_j=\bigcup_{v\in D_j\cap \Gamma_{t_i}} vD_i$, for each $j>i\geq 1$.
\end{enumerate}
\end{proposition}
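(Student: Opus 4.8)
The plan is to build $(t_i)_{i\in\mathbb{N}}$ and $(D_i)_{i\in\mathbb{N}}$ by induction, fixing beforehand an enumeration $G=\{g_1,g_2,\dots\}$ of the countable group so that property (3) can be enforced one element at a time. For the base case, set $t_1=1$ and let $D_1$ be any fundamental domain of $G/\Gamma_1$ containing $1_G$. For the inductive step, assume $t_i$ and a finite fundamental domain $D_i$ of $G/\Gamma_{t_i}$ with $1_G\in D_i$ are given, and let $g$ be the first element of the enumeration not lying in $D_i$. Since $\Gamma_{t_i}$ is normal, $g\Gamma_{t_i}=\Gamma_{t_i}g$, so there are unique $\gamma\in\Gamma_{t_i}$ and $d\in D_i$ with $g=\gamma d$. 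Using $\bigcap_n\Gamma_n=\{1_G\}$, pick $t_{i+1}>t_i$ with $\gamma\notin\Gamma_{t_{i+1}}$ when $\gamma\neq 1_G$ (and $t_{i+1}=t_i+1$ otherwise); then choose a fundamental domain $E_{i+1}\subseteq\Gamma_{t_i}$ of the finite group $\Gamma_{t_i}/\Gamma_{t_{i+1}}$ that contains both $1_G$ and $\gamma$, and set $D_{i+1}:=E_{i+1}D_i=\{vd':v\in E_{i+1},\,d'\in D_i\}$.

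Then I would verify the four properties. Property (1) is immediate from $1_G\in E_{i+1}$ (so $D_i\subseteq D_{i+1}$) together with an induction giving $1_G\in D_i$. Property (3) holds because $g=\gamma d\in E_{i+1}D_i=D_{i+1}$, so routine bookkeeping on the enumeration shows every $g_k$ eventually enters the nested union $\bigcup_i D_i$. For property (2), the key computation is: $D_i$ being a fundamental domain and $\Gamma_{t_i}$ normal give $G=\bigsqcup_{d'\in D_i}\Gamma_{t_i}d'$, while $\Gamma_{t_i}=\bigsqcup_{v\in E_{i+1}}v\Gamma_{t_{i+1}}$; substituting and rewriting $v\Gamma_{t_{i+1}}d'=vd'\Gamma_{t_{i+1}}$ (normality again) yields $G=\bigsqcup_{v,d'}vd'\Gamma_{t_{i+1}}$, and the union is disjoint because intersecting with the coarser partition $\{\Gamma_{t_i}d'\}_{d'\in D_i}$ forces $d_1'=d_2'$, and then $v_1=v_2$; thus $D_{i+1}$ is a fundamental domain of $G/\Gamma_{t_{i+1}}$.

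Property (4) would be handled last: for fixed $i$, I would show by induction on $j>i$ that $D_j\cap\Gamma_{t_i}=E_jE_{j-1}\cdots E_{i+1}$ and that this set is a fundamental domain of $\Gamma_{t_i}/\Gamma_{t_j}$ — the inductive step is the same coset computation as in (2), now carried out inside $\Gamma_{t_j}$ rather than inside $G$, using $E_k\subseteq\Gamma_{t_{k-1}}$, and the fact that $v'w\in E_{j+1}D_j$ lies in $\Gamma_{t_i}$ if and only if $w\in\Gamma_{t_i}$ again uses that $1_G$ is the unique $D_i$-representative of the trivial coset. Granting this, $D_j=E_j\cdots E_{i+1}D_i=(D_j\cap\Gamma_{t_i})D_i=\bigcup_{v\in D_j\cap\Gamma_{t_i}}vD_i$, the union being disjoint by the same cancellation argument. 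The construction is elementary; the only point demanding genuine care is property (2) and its iterated form in (4), where one must exploit normality of the $\Gamma_n$ to interchange left and right cosets freely and must check that the refined family of cosets is pairwise disjoint, not merely that it covers $G$.
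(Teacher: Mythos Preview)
The paper does not supply a proof of this proposition; it is quoted verbatim from \cite[Lemma~2.8]{CeCoGo23} and used as a black box. So there is nothing in the present paper to compare your argument against.

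That said, your construction is correct and is the natural one. A couple of minor remarks. First, in your inductive step the case $\gamma=1_G$ is impossible: if $g=\gamma d$ with $\gamma=1_G$ then $g=d\in D_i$, contradicting the choice of $g$; so the clause ``$t_{i+1}=t_i+1$ otherwise'' is vacuous. Second, your verification of~(2) silently uses that the map $(v,d')\mapsto vd'$ from $E_{i+1}\times D_i$ to $G$ is injective, which is exactly what the disjointness of the cosets $vd'\Gamma_{t_{i+1}}$ gives you; this is fine, but worth making explicit since it is what guarantees $|D_{i+1}|=[G:\Gamma_{t_{i+1}}]$. The handling of~(4) via $D_j\cap\Gamma_{t_i}=E_jE_{j-1}\cdots E_{i+1}$ is clean and correct: the key observation that $v_j\cdots v_{i+1}d\in\Gamma_{t_i}$ forces $d=1_G$ (because $1_G$ is the unique $D_i$-representative of the trivial coset) is exactly right.
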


\subsection{Regular Toeplitz subshifts.}\label{Regular-T} 
See \cite[Section 3]{CeCoGo23} for a further development of this subsection. 
Let $\Sigma$ be a finite set with $|\Sigma|\geq 2$. Let $\eta\in\Sigma^G$ be a Toeplitz array such that $(\Gamma_n)_{n\in\mathbb{N}}$ is a period structure of $\eta$.
Recall that  the  Toeplitz subshift $\overline{O_{\sigma}(\eta)}$ is an almost 1-1 extension of the  $G$-odometer $\overleftarrow{G}$ associated with $(\Gamma_n)_{n\in\mathbb{N}}$, and $\mathcal{T}$ denotes the set of Toeplitz arrays in $\overline{O_\sigma(\eta)}$. 
Let $(D_n)_{n\in\mathbb{N}}$ be a sequence as in Proposition \ref{decom}, and if necessary, by taking a subsequence of $(\Gamma_i)_{n\in\mathbb{N}}$, we can suppose that $t_i=i$. 
The sequence $(d_i)_{i\in\mathbb{N}}$, given by $d_i=\dfrac{|D_i\cap \Per(\eta,\Gamma_i)|}{|D_i|}$, is increasing and convergent. We denote $d=\lim_{i\to\infty}d_i$. The Toeplitz array $\eta$ is said to be {\it regular} if any of the following equivalent statements is true: 
\begin{enumerate}
\item $\nu(\pi(\mathcal{T}))=1$.
\item There exists an invariant probability measure $\mu$ of $(\overline{O_\sigma(\eta)}, \sigma, G)$ such that $\mu(\mathcal{T})=1$.
\item There exists a unique invariant probability measure $\mu$ of $(\overline{O_\sigma(\eta)},\sigma, G)$ such that  $\mu(\mathcal{T})=1$.
\item $d=1$.
 \end{enumerate}
If $\eta$ does not satisfy any of the previous statements, we call it {\it irregular} or {\it non-regular}.

As an immediate consequence of the above discussion, we get the following Proposition.
\begin{proposition}\label{regular-topo}
    Every free regular Toeplitz subshift is a topo-isomorphic extension of its maximal e\-qui\-con\-ti\-\-nuous factor.
\end{proposition}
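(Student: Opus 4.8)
The plan is to check that the almost $1$-$1$ factor map $\pi\colon\overline{O_\sigma(\eta)}\to\overleftarrow{G}$ furnished by Proposition \ref{pimap}, which realizes $\overleftarrow{G}$ as the maximal equicontinuous factor of $\overline{O_\sigma(\eta)}$, is itself a topo-isomorphism. By the definition of a topo-isomorphic extension, this amounts to showing that $\pi$ is a measure conjugacy of $\mu$ for \emph{every} invariant measure $\mu$ of $\overline{O_\sigma(\eta)}$.

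So I would fix an arbitrary invariant measure $\mu$ of $\overline{O_\sigma(\eta)}$. Its pushforward $\pi(\mu)$ is an invariant measure of the equicontinuous minimal system $(\overleftarrow{G},\phi,G)$, which is uniquely ergodic; hence $\pi(\mu)=\nu$. By Proposition \ref{pimap} we have $\mathcal{T}=\pi^{-1}(S)$ with $S=\{y\in\overleftarrow{G}:|\pi^{-1}\{y\}|=1\}$, so $\pi(\mathcal{T})=S$, and therefore $\mu(\mathcal{T})=\pi(\mu)(S)=\nu(\pi(\mathcal{T}))$; the right-hand side equals $1$ precisely because $\eta$ is regular (the first of the equivalent conditions listed in Subsection \ref{Regular-T}).

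Now set $Y'=\pi(\mathcal{T})=S$ and $X'=\mathcal{T}=\pi^{-1}(Y')$. Since $\pi$ is a closed map, the function $y\mapsto\diam\pi^{-1}\{y\}$ is upper semicontinuous, so $Y'=\{y\in\overleftarrow{G}:\diam\pi^{-1}\{y\}=0\}$ is a $G_\delta$ (in particular Borel) set; it is $G$-invariant because $\pi$ is equivariant, and $\nu(Y')=1$ by regularity. Correspondingly $X'=\pi^{-1}(Y')$ is Borel, $G$-invariant, and $\mu(X')=1$ by the previous paragraph. By construction $\pi|_{X'}\colon X'\to Y'$ is a bijection; it is continuous, hence Borel, and a Borel injection between standard Borel spaces is a Borel isomorphism onto its image by the Lusin--Souslin theorem, so $\pi|_{X'}$ is bimeasurable. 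Finally, for any Borel $B\subseteq\overleftarrow{G}$ one has $(\pi|_{X'})^{-1}(B)=X'\cap\pi^{-1}(B)$, whence $\pi|_{X'}(\mu)(B)=\mu\bigl(X'\cap\pi^{-1}(B)\bigr)=\mu(\pi^{-1}(B))=\nu(B)$, using $\mu(X')=1$ and $\pi(\mu)=\nu$; that is, $\pi|_{X'}(\mu)=\nu$. Thus $\pi$ is a measure conjugacy of $\mu$, and since $\mu$ was arbitrary, $\pi$ is a topo-isomorphism, so $\overline{O_\sigma(\eta)}$ is a topo-isomorphic extension of its maximal equicontinuous factor.

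The argument is essentially bookkeeping built on Proposition \ref{pimap} and the equivalences of Subsection \ref{Regular-T}; the only mildly technical points, where I would take care, are the Borel measurability of the fibre-counting set $\pi(\mathcal{T})$ (settled by upper semicontinuity of fibre diameters) and of $(\pi|_{X'})^{-1}$ (settled by Lusin--Souslin). As a by-product, since every invariant measure of $\overline{O_\sigma(\eta)}$ gives full mass to $\mathcal{T}$, the third equivalent condition of Subsection \ref{Regular-T} also yields unique ergodicity; freeness of the subshift is not actually needed here beyond being the standing assumption of the section.
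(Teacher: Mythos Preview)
Your proposal is correct and follows precisely the approach the paper leaves implicit: the paper states the proposition as ``an immediate consequence of the above discussion,'' meaning the equivalences in Subsection~\ref{Regular-T} together with Proposition~\ref{pimap}, and you have simply supplied the routine verification (Borel measurability of $\pi(\mathcal{T})$ via upper semicontinuity of fibre diameters, and bimeasurability of $\pi|_{\mathcal{T}}$ via Lusin--Souslin) that the paper omits. Your closing observation that freeness plays no role in the argument is also accurate.
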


\section{Toeplitz subshifts with invariant probability measures}\label{Sectio-3}

 Let $\Sigma$ be a finite set such that $|\Sigma|\geq 2$ and let $(\Gamma_n)_{n\in\mathbb{N}}$ be a decreasing sequence of normal finite index subgroups of $G$ with $\bigcap_{i\in\mathbb{N}}\Gamma_i=\{1_G\}$.
Let $(D_n)_{n\in\mathbb{N}}$ be an increasing sequence as in Proposition \ref{decom} with $(t_n)_{n\in\mathbb{N}}$ as (possibly after taking a subsequence)  $t_n=n$ for every $n\in\mathbb{N}$. For every $n\in\mathbb{N}$, define 
\begin{align}\label{J(n)}
    J(n)=D_n\setminus\bigcup_{i=0}^{n-1} J(i)\Gamma_{i+1},
\end{align}
where $J(0)=\{1_G\}$. Observe that $D_n\subseteq \bigcup_{i=0}^{n}J(i)\Gamma_{i+1}$.

\medskip

\begin{proposition}\label{auxiliar0}
    For every $n\in\mathbb{N}$, we have 
    \begin{align*}
        J(n+1)=\bigcup_{\gamma\in(D_{n+1}\cap \Gamma_n)\setminus \{1_G\}}\gamma J(n).
    \end{align*}
\end{proposition}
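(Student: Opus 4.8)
The plan is to unwind the definition \eqref{J(n)} of $J(n+1)$ and exploit the self-similar structure of the sequence $(D_n)$ provided by Proposition~\ref{decom}(4). Put $X_n=\bigcup_{i=0}^{n-1}J(i)\Gamma_{i+1}$, so that \eqref{J(n)} reads $J(n)=D_n\setminus X_n$ and, since $X_{n+1}=X_n\cup J(n)\Gamma_{n+1}$,
\begin{align*}
    J(n+1)=D_{n+1}\setminus\bigl(X_n\cup J(n)\Gamma_{n+1}\bigr).
\end{align*}
(If $n=0$ is allowed the identity is the immediate $J(1)=D_1\setminus\Gamma_1=D_1\setminus\{1_G\}$, since $D_1$ is a fundamental domain for $G/\Gamma_1$ containing $1_G$; for $n\ge 1$ Proposition~\ref{decom}(4) applies with $j=n+1>i=n$.) That item gives $D_{n+1}=\bigcup_{v\in D_{n+1}\cap\Gamma_n}vD_n$, a union which is moreover disjoint because $\Gamma_n$ is normal and $D_n$ is a fundamental domain for $G/\Gamma_n$. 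It therefore suffices to compute $vD_n\setminus\bigl(X_n\cup J(n)\Gamma_{n+1}\bigr)$ for each $v\in D_{n+1}\cap\Gamma_n$ and to reassemble.

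The decisive observation is that $X_n$ is invariant under left multiplication by $\Gamma_n$. Indeed, normality of $\Gamma_{i+1}$ gives $\Gamma_{i+1}J(i)=J(i)\Gamma_{i+1}$, so $J(i)\Gamma_{i+1}$ is stable under left multiplication by $\Gamma_{i+1}$; and as $i\le n-1$ forces $\Gamma_n\subseteq\Gamma_{i+1}$, each $J(i)\Gamma_{i+1}$, hence also $X_n$, is left $\Gamma_n$-invariant. Consequently, for $v\in\Gamma_n$ we get $vD_n\cap X_n=v(D_n\cap X_n)=v(D_n\setminus J(n))$, and therefore $vD_n\setminus X_n=vJ(n)$. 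It remains to subtract $J(n)\Gamma_{n+1}$. For $v=1_G$ this annihilates the whole piece: $1_G\in\Gamma_{n+1}$ gives $J(n)\subseteq J(n)\Gamma_{n+1}$, so $D_n\setminus\bigl(X_n\cup J(n)\Gamma_{n+1}\bigr)=J(n)\setminus J(n)\Gamma_{n+1}=\emptyset$. For $v\neq 1_G$ I would instead check that $vJ(n)\cap J(n)\Gamma_{n+1}=\emptyset$: if $vj=j'\gamma$ with $j,j'\in J(n)\subseteq D_n$ and $\gamma\in\Gamma_{n+1}$, then $v,\gamma\in\Gamma_n$ and normality force $j$ and $j'$ into the same $\Gamma_n$-coset, whence $j=j'$ as $D_n$ is a fundamental domain; then $v=j\gamma j^{-1}\in\Gamma_{n+1}$, and since $1_G$ and $v$ both lie in $D_{n+1}\cap\Gamma_{n+1}$ and $D_{n+1}$ is a fundamental domain, $v=1_G$, a contradiction. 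Hence $vD_n\setminus\bigl(X_n\cup J(n)\Gamma_{n+1}\bigr)=vJ(n)$ for every $v\in(D_{n+1}\cap\Gamma_n)\setminus\{1_G\}$.

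Reassembling over $v\in D_{n+1}\cap\Gamma_n$, the term $v=1_G$ contributing $\emptyset$ and each $v\neq 1_G$ contributing $vJ(n)$, yields
\begin{align*}
    J(n+1)=\bigcup_{\gamma\in(D_{n+1}\cap\Gamma_n)\setminus\{1_G\}}\gamma J(n),
\end{align*}
as claimed (and the union is automatically disjoint, the sets $\gamma D_n$ being so). The genuinely delicate points are the two in the middle paragraph: that translating by $v\in\Gamma_n$ carries the portion of $D_n$ not yet covered by $X_n$ exactly onto the corresponding portion of $vD_n$, and that the block $J(n)\Gamma_{n+1}$ newly removed at stage $n+1$ meets $vD_n$ only through $v$ itself. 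Both hinge on the normality of the subgroups $\Gamma_k$ and on $D_n,D_{n+1}$ being fundamental domains; everything else is routine bookkeeping with \eqref{J(n)}.
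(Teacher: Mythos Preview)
Your proof is correct and follows essentially the same route as the paper's: both hinge on the left $\Gamma_n$-invariance of $\bigcup_{i=0}^{n-1}J(i)\Gamma_{i+1}$ (which the paper invokes implicitly) and on the fundamental-domain argument showing that $vj\in J(n)\Gamma_{n+1}$ with $v\in\Gamma_n$, $j\in J(n)$ forces $v\in\Gamma_{n+1}\cap D_{n+1}=\{1_G\}$. Your presentation is more set-theoretic (computing $vD_n\setminus(X_n\cup J(n)\Gamma_{n+1})$ block by block) whereas the paper argues element-wise, and you are more careful with the $n=0$ edge case, but the substance is the same.
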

\begin{proof}
    If $u\in D_{n+1}$, then $u=\gamma v$ for some $\gamma\in D_{n+1}\cap \Gamma_n$ and $v\in D_n$. If $u\in J(n+1)$, using (\ref{J(n)}), we obtain that $\gamma\neq 1_G$. Additionally, $v\notin \bigcup_{i=0}^{n-1} J(i)\Gamma_{i+1}$. Otherwise, by (\ref{J(n)}) and normality of $\gamma_n$, we would have that $\gamma v\in \bigcup_{i=0}^{n} J(i)\Gamma_{i+1}$, which is not possible. Now, suppose that $v\in J(n)$ and $\gamma\in (D_{n+1}\cap\Gamma_n)\setminus\{1_G\}$. We will show that $\gamma v\in J(n+1)$. If $\gamma v\in \bigcup_{i=0}^n J(i)\Gamma_{i+1}$, we obtain that $\gamma v\in J(n)\Gamma_{n+1}$.
    Therefore, we can conclude that $\gamma\in\Gamma_{n+1}\cap D_{n+1}=\{1_G\}$, a contradiction.
    Thus, the proposition is proven. 
\end{proof}

\begin{proposition}\label{Per-eq}
Let $\eta\in\Sigma^G$ be a Toeplitz sequence with period structure  $(\Gamma_n)_{n\in\mathbb{N}}$. Then, $J(n)\subseteq \Per(\eta,\Gamma_{n+1})\setminus\Per(\eta,\Gamma_n)$ for every $n\in\mathbb{N},$ if and only if
    \begin{align}\label{T1}
        \Per(\eta,\Gamma_n)=\bigcup_{i=0}^{n-1} J(i)\Gamma_{i+1} \mbox{ for every }n\in\mathbb{N}.
    \end{align}
\end{proposition}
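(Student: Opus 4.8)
The plan is to prove both implications by induction on $n$, using the recursive definition (\ref{J(n)}) of $J(n)$ together with the elementary fact that $\Per(\eta,\Gamma_{n})\subseteq\Per(\eta,\Gamma_{n+1})$ (since $\Gamma_{n+1}\subseteq\Gamma_n$ forces any $\Gamma_n$-periodic position to be $\Gamma_{n+1}$-periodic), and that $\Per(\eta,\Gamma_n)$ is a union of left cosets of $\Gamma_n$, hence in particular $\Gamma_{n+1}$-invariant on the right only in the sense of being saturated: if $g\in\Per(\eta,\Gamma_n,\alpha)$ then $g\Gamma_n\subseteq\Per(\eta,\Gamma_n,\alpha)$. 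First I would record these standard observations, and also note from the paragraph after (\ref{J(n)}) that $D_n\subseteq\bigcup_{i=0}^{n}J(i)\Gamma_{i+1}$, so that $D_n$ meets every coset of $\Gamma_n$ exactly once and the sets $J(i)\Gamma_{i+1}$, $0\le i\le n-1$, together with $J(n)$, partition a full set of coset representatives up to the $\Gamma_n$-saturation.

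For the ``if'' direction, assume (\ref{T1}). Then for every $n$,
\begin{align*}
    \Per(\eta,\Gamma_{n+1})\setminus\Per(\eta,\Gamma_n)
    =\Bigl(\bigcup_{i=0}^{n} J(i)\Gamma_{i+1}\Bigr)\setminus\Bigl(\bigcup_{i=0}^{n-1} J(i)\Gamma_{i+1}\Bigr),
\end{align*}
and since $J(n)=D_n\setminus\bigcup_{i=0}^{n-1}J(i)\Gamma_{i+1}$ by (\ref{J(n)}), every element of $J(n)$ lies in the right-hand set; this gives the inclusion $J(n)\subseteq\Per(\eta,\Gamma_{n+1})\setminus\Per(\eta,\Gamma_n)$. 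Here one must be slightly careful that passing from the coset form $\bigcup J(i)\Gamma_{i+1}$ to membership of the representatives $J(n)$ itself is legitimate, which it is because $1_G\in\Gamma_{n+1}$.

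For the ``only if'' direction, assume $J(n)\subseteq\Per(\eta,\Gamma_{n+1})\setminus\Per(\eta,\Gamma_n)$ for all $n$, and prove (\ref{T1}) by induction; the base case $n=1$ reads $\Per(\eta,\Gamma_1)=J(0)\Gamma_1=\Gamma_1$, which should be arranged to hold by the choice of period structure (or can be folded into the hypothesis). For the inductive step, suppose $\Per(\eta,\Gamma_n)=\bigcup_{i=0}^{n-1}J(i)\Gamma_{i+1}$. The inclusion $\supseteq$ in (\ref{T1}) for $n+1$ follows because $\Per(\eta,\Gamma_n)\subseteq\Per(\eta,\Gamma_{n+1})$ handles the terms $i\le n-1$, while $J(n)\subseteq\Per(\eta,\Gamma_{n+1})$ by hypothesis gives the term $i=n$, and $\Per(\eta,\Gamma_{n+1})$ is saturated under right-multiplication by $\Gamma_{n+1}$. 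For the reverse inclusion $\subseteq$, take $g\in\Per(\eta,\Gamma_{n+1})$; multiplying on the right by a suitable element of $\Gamma_{n+1}$ we may assume $g\in D_{n+1}$, and then $g\in\bigcup_{i=0}^{n}J(i)\Gamma_{i+1}$ by the containment noted after (\ref{J(n)}); if $g\in J(i)\Gamma_{i+1}$ for some $i\le n-1$ we are done, while if $g\in J(n)\Gamma_{n+1}$ the claim is immediate. I expect the main obstacle to be bookkeeping the saturation: one has to consistently argue that statements about coset-unions $J(i)\Gamma_{i+1}$ and statements about the finite representative sets $J(i)$ can be moved back and forth, which relies on $\Per(\eta,\Gamma_m)$ being a union of $\Gamma_m$-cosets and on the partition property (4) of Proposition \ref{decom} linking $D_{n+1}$ to $D_n$ via $D_{n+1}\cap\Gamma_n$; making these compatibilities precise (rather than the induction itself) is where the care is needed.
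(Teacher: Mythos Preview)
Your ``if'' direction is fine and matches the paper. The ``only if'' direction, however, has a real gap: as written, you never use the second half of the hypothesis, namely $J(m)\cap\Per(\eta,\Gamma_m)=\emptyset$, and without it the argument breaks at two points.

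First, the base case $\Per(\eta,\Gamma_1)=\Gamma_1$ is not something that can be ``arranged by the choice of period structure'' or folded into the assumptions: that $\Gamma_1$ is an essential group of periods does not force $\Per(\eta,\Gamma_1)$ to be a single coset. The paper proves it from the hypothesis on $J(1)$: if some $g\in\Per(\eta,\Gamma_1)\setminus\Gamma_1$, then its representative $d\in D_1\setminus\Gamma_1=J(1)$ would lie in $\Per(\eta,\Gamma_1)$, contradicting $J(1)\cap\Per(\eta,\Gamma_1)=\emptyset$. Second, in your inductive step for the inclusion $\Per(\eta,\Gamma_{n+1})\subseteq\bigcup_{i=0}^{n}J(i)\Gamma_{i+1}$, after reducing to $g\in D_{n+1}$ you assert $g\in\bigcup_{i=0}^{n}J(i)\Gamma_{i+1}$ ``by the containment noted after (\ref{J(n)})''. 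But that containment, applied with $n+1$ in place of $n$, only gives $D_{n+1}\subseteq\bigcup_{i=0}^{n+1}J(i)\Gamma_{i+1}$; you are missing exactly the case $g\in J(n+1)$. This is precisely where the paper invokes the hypothesis: $g\in\Per(\eta,\Gamma_{n+1})\cap J(n+1)$ is impossible because $J(n+1)\subseteq\Per(\eta,\Gamma_{n+2})\setminus\Per(\eta,\Gamma_{n+1})$. Once you insert this one line (and the analogous one for the base case), your proof becomes correct and is essentially the paper's.
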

\begin{proof}
Assume $J(n)\subseteq \Per(\eta,\Gamma_{n+1})\setminus \Per(\eta,\Gamma_n)$ for every $n\in\mathbb{N}$.
We use induction on $n$: $n=0$. Note that $\Gamma_1\subseteq \Per(\eta,\Gamma_{1})$. If there exists $g\in \Per(\eta,\Gamma_1)\setminus \Gamma_1$, we can find $d\in D_1$ and $\gamma\in\Gamma_1$ such that $g=d\gamma$. Since $g\in\Per(\eta,\Gamma_1)$, we obtain that $d\in\Per(\eta,\Gamma_1)$. On the other hand, $d\in D_1\setminus\Gamma_1=J(1)\subseteq \Per(\eta,\Gamma_2)\setminus \Per(\eta,\Gamma_1) $, a contradiction. Then, $\Per(\eta,\Gamma_1)=\Gamma_1$. 

Suppose that (\ref{T1}) is true for $n=k$, we will prove it for $n=k+1$. By hypothesis of induction, we have that 
\begin{align*}
    \bigcup_{i=0}^{k-1} J(i)\Gamma_{i+1}=\Per(\eta,\Gamma_k)\subseteq\Per(\eta,\Gamma_{k+1}).
\end{align*}

Moreover, $J(k)\Gamma_{k+1}\subseteq\Per(\eta,\Gamma_{k+1})$. 
Therefore, $\bigcup_{i=0}^{k}J(i)\Gamma_{i+1}\subseteq \Per(\eta,\Gamma_{k+1}).$
Suppose that there exists
$g\in\Per(\eta,\Gamma_{k+1})\setminus\bigcup_{i=0}^{k}J(i)\Gamma_{i+1}$. We know that $g=d\gamma$, for some $d\in D_{k+1}$ and $\gamma\in \Gamma_{k+1}$ 
Thus, $d\in \Per(\eta,\Gamma_{k+1})\cap J(k+1)$, which is not possible.

Now, assume that (\ref{T1}) is true. For every $n\in\mathbb{N}$ observe that
\begin{align*}
    \Per(\eta,\Gamma_{n+1})\setminus\Per(\eta,\Gamma_n)=\left(\bigcup_{i=0}^n J(i)\Gamma_{i+1}\right)\setminus\left( \bigcup_{i=0}^{n-1} J(i)\Gamma_{i+1}\right)=J(n)\Gamma_{n+1}.
\end{align*}
Therefore, $J(n)\subseteq \Per(\eta,\Gamma_{n+1})\setminus\Per(\eta,\Gamma_n)$.
\end{proof}

\begin{lemma}[{\cite[Lemma 4.7]{CeCoGo23}}]\label{good-relation1}
Let $n\in\mathbb{N}$. For every $m\geq n+2$ there exists 
\begin{equation}\label{good-relation}
\gamma\in (\Gamma_{n+1}\cap D_m)\setminus (D_{n+1}\Gamma_{n+2} \cup  \cdots \cup D_{m-1}\Gamma_m).
\end{equation}
Moreover,
$$
\left|(\Gamma_{n+1}\cap D_m)\setminus (D_{n+1}\Gamma_{n+2} \cup  \cdots \cup D_{m-1}\Gamma_m)\right|\geq  \frac{|D_{m}|}{|D_{n+1}|}\prod_{l=1}^{m-n-1}\left(1-\frac{|D_{n+l}|}{|D_{n+l+1}|}   \right).
$$

Furthermore, if $\gamma$ satisfies (\ref{good-relation}), then
$$
\gamma D_{n+1}\subseteq  D_m\setminus (D_{n+1}\Gamma_{n+2} \cup  \cdots \cup D_{m-1}\Gamma_m).
$$
\end{lemma}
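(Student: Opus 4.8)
The plan is to reduce the whole statement to a counting identity \emph{inside the subgroup} $\Gamma_{n+1}$, where it becomes a direct instance of the recursion already established in Proposition \ref{auxiliar0}, and then to handle the last assertion by a short bookkeeping argument about how the partition induced by $D_{n+1}$ refines the one induced by $D_k$ inside $D_m$.

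First I would set up the reduction. For $j\geq n+1$ put $E_j:=D_j\cap\Gamma_{n+1}$. Using Proposition \ref{decom}(4) and normality one checks that $(E_j)_{j\geq n+1}$ is an increasing family of fundamental domains for the chain $\Gamma_{n+1}\supseteq\Gamma_{n+2}\supseteq\cdots$ \emph{inside} $H:=\Gamma_{n+1}$, with $E_{n+1}=\{1_G\}$, and that this family satisfies properties (1)--(4) of Proposition \ref{decom} relative to the shifted chain. The key identity is that, for $k\geq n+1$,
\[
D_k\Gamma_{k+1}\cap\Gamma_{n+1}=(D_k\cap\Gamma_{n+1})\Gamma_{k+1}=E_k\Gamma_{k+1},
\]
which holds because $\Gamma_{k+1}\subseteq\Gamma_{n+1}$. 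Since the set $(\Gamma_{n+1}\cap D_m)\setminus(D_{n+1}\Gamma_{n+2}\cup\cdots\cup D_{m-1}\Gamma_m)$ is contained in $\Gamma_{n+1}$, this identity rewrites it as $E_m\setminus\bigcup_{i=0}^{r-1}E_{n+1+i}\Gamma_{n+2+i}$, where $r=m-n-1\geq 1$.

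Next I would recognize this set as exactly the ``$J$-type'' set for the shifted chain. Running the construction in \eqref{J(n)} inside $H$ with the domains $(E_{n+1+j})_{j\geq 0}$ yields sets $\widehat{J}(j)$ with $\widehat{J}(0)=\{1_G\}$, and one verifies $\bigcup_{i=0}^{j-1}\widehat{J}(i)\Gamma_{n+2+i}=\bigcup_{i=0}^{j-1}E_{n+1+i}\Gamma_{n+2+i}$ (one inclusion from $\widehat{J}(i)\subseteq E_{n+1+i}$, the other from $E_{n+1+i}\subseteq\bigcup_{l\leq i}\widehat{J}(l)\Gamma_{n+2+l}$ together with $\Gamma_{n+2+l}\Gamma_{n+2+i}=\Gamma_{n+2+l}$). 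Hence the set in question equals $\widehat{J}(r)$. Now Proposition \ref{auxiliar0}, whose proof uses only properties (1)--(4) and normality, applies verbatim to the shifted chain: $\widehat{J}(j+1)=\bigcup_{\gamma\in(E_{n+2+j}\cap\Gamma_{n+1+j})\setminus\{1_G\}}\gamma\widehat{J}(j)$, a disjoint union by normality of $\Gamma_{n+1+j}$. Therefore, since $|E_{n+2+j}\cap\Gamma_{n+1+j}|=[\Gamma_{n+1+j}:\Gamma_{n+2+j}]=|D_{n+2+j}|/|D_{n+1+j}|$,
\[
\bigl|\widehat{J}(r)\bigr|=\prod_{j=0}^{r-1}\bigl([\Gamma_{n+1+j}:\Gamma_{n+2+j}]-1\bigr)=\prod_{l=1}^{m-n-1}\frac{|D_{n+l+1}|-|D_{n+l}|}{|D_{n+l}|}=\frac{|D_m|}{|D_{n+1}|}\prod_{l=1}^{m-n-1}\Bigl(1-\frac{|D_{n+l}|}{|D_{n+l+1}|}\Bigr),
\]
the last equality by telescoping $\prod_l |D_{n+l+1}|/|D_{n+l}|=|D_m|/|D_{n+1}|$. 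In particular the claimed bound holds \emph{with equality}; and since the chain is strictly decreasing every factor above is $\geq 1$, so the set is nonempty, giving the existence of $\gamma$.

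For the last assertion, fix $\gamma$ satisfying \eqref{good-relation}. That $\gamma D_{n+1}\subseteq D_m$ is immediate from Proposition \ref{decom}(4) applied with $j=m>i=n+1$, since $\gamma\in D_m\cap\Gamma_{n+1}$. For $x\in D_m$ and $k\in\{n+1,\dots,m-1\}$ let $w_k(x)$ be the unique element of $D_m\cap\Gamma_k$ with $x\in w_k(x)D_k$; a short argument using normality of $\Gamma_k$ and $\Gamma_{k+1}$ shows $x\in D_k\Gamma_{k+1}$ if and only if $w_k(x)\in\Gamma_{k+1}$. Moreover, writing $wD_k=\bigcup_{v\in D_k\cap\Gamma_{n+1}}(wv)D_{n+1}$ with each $wv\in D_m\cap\Gamma_{n+1}$, one sees the partition $\{uD_{n+1}:u\in D_m\cap\Gamma_{n+1}\}$ refines $\{wD_k:w\in D_m\cap\Gamma_k\}$, so $w_k$ is constant on each $uD_{n+1}$. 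Applying this with $u=\gamma$: every element of $\gamma D_{n+1}$ has $w_k$ equal to $w_k(\gamma)$, hence lies in $D_k\Gamma_{k+1}$ iff $\gamma$ does; since $\gamma\notin D_k\Gamma_{k+1}$ we get $\gamma D_{n+1}\cap D_k\Gamma_{k+1}=\emptyset$ for every such $k$, which, combined with $\gamma D_{n+1}\subseteq D_m$, is exactly the claim. I expect the main obstacle to be purely organizational --- keeping the two chains and the reindexing $r=m-n-1$ straight, and checking the refinement of the $D_{n+1}$-partition by the $D_k$-partition inside $D_m$ --- as the dynamical content is entirely absorbed into the recursion of Proposition \ref{auxiliar0}.
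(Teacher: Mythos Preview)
The paper does not supply its own proof of this lemma: it is quoted verbatim as \cite[Lemma 4.7]{CeCoGo23} and used as a black box, so there is nothing to compare your argument against directly. Your proof is correct and self-contained. The reduction to the chain $(\Gamma_{n+1+j})_{j\geq 0}$ inside $H=\Gamma_{n+1}$ via $E_j=D_j\cap\Gamma_{n+1}$ is clean, the identification of the set in question with $\widehat{J}(m-n-1)$ is justified, and the recursion of Proposition~\ref{auxiliar0} transfers because its proof uses only properties (1)--(4) of Proposition~\ref{decom} and normality, all of which hold for $(E_{n+1+j})_j$. Your telescoping computation is accurate and in fact gives the cardinality bound with \emph{equality}, which is slightly sharper than the stated $\geq$. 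The refinement argument for the final assertion---showing that the $D_{n+1}$-blocks $uD_{n+1}$ inside $D_m$ refine the $D_k$-blocks $wD_k$, hence that membership in $D_k\Gamma_{k+1}$ is constant on $\gamma D_{n+1}$---is correct and uses normality of $\Gamma_k$ and $\Gamma_{k+1}$ exactly where needed. The only implicit hypothesis you invoke for nonemptiness is that each index $[\Gamma_{n+l}:\Gamma_{n+l+1}]\geq 2$; this is harmless here since the standing assumptions in Section~\ref{Sectio-3} (taking a subsequence so that $t_n=n$, trivial intersection, $G$ infinite) guarantee a strictly decreasing chain.
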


\medskip

 For every $n\in\mathbb{N}$, we define $\eta_n\in\Sigma^G$ as
$
    \eta_n(\gamma d)=\eta(d) \mbox{ for every }\gamma\in\Gamma_n, d\in D_n.$
    Note that $\sigma^{\gamma}\eta_n=\eta_n$ for every $\gamma\in\Gamma_n$. Therefore,
$   O_\sigma(\eta_n)=\{\sigma^{d^{-1}}\eta_n\mid d\in D_n\}$. 
We define a $\sigma$-invariant measure over $\Sigma^G$, associated with $\eta_n$, as follows 
\begin{align*}
    \mu_n=\dfrac{1}{|D_n|}\sum_{d\in D_n}\delta_{\sigma^{d^{-1}}\eta_n},
\end{align*}
where $\delta_x$ denotes the Dirac measure supported in $x$.
 We define the set $U_n$ as 
 \begin{equation}\label{definition}
 U_n:=\{x\in \Sigma^G: x(D_{n+1})=\eta_n(D_{n+1})\}.
 \end{equation}

The following Lemma can be regarded as a generalization of \cite[Lemma 4.9]{CeCoGo23},  since the Toeplitz array constructed in \cite[Section 4]{CeCoGo23} satisfies  ${J(n)\subseteq \Per(\eta,\Gamma_{n+1})\setminus \Per(\eta,\Gamma_n)}$ for every $n\in\mathbb{N}$.

\begin{lemma}\label{good-patches}
Let $\eta\in\Sigma^G$ be a Toeplitz sequence with a period structure $(\Gamma_n)_{n\in\mathbb{N}}$ such that $J(n)\subseteq \Per(\eta,\Gamma_{n+1})\setminus\Per(\eta,\Gamma_n)$ for every $n\in\mathbb{N}$.
Let $m,n\in \mathbb{N}$ with $m>n+2$.
If $\gamma_0 \in \Gamma_{m+1}\cap D_{n}$ satisfies the relation (\ref{good-relation}) and 
\begin{align}\label{Patching}
    \eta(\gamma_0\gamma u)=\eta(u), \mbox{ for every }u\in J(n)\mbox{ and }\gamma\in D_{n+1}\cap\Gamma_n,
\end{align}then $\sigma^{\gamma_0^{-1}}\eta\in U_{n}$. 
This implies that  $U_{n}\cap O_{\sigma}(\eta)\neq \emptyset.$  
\end{lemma}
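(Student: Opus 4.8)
The plan is to show that the hypothesis $\gamma_0 \in \Gamma_{m+1}\cap D_n$ satisfying (\ref{good-relation}) together with the patching relation (\ref{Patching}) forces $\sigma^{\gamma_0^{-1}}\eta$ to agree with $\eta_n$ on the block $D_{n+1}$, which is precisely the condition defining membership in $U_n$ (see (\ref{definition})). First I would unpack what needs to be checked: for every $w\in D_{n+1}$ we must verify $(\sigma^{\gamma_0^{-1}}\eta)(w)=\eta(\gamma_0 w)=\eta_n(w)$, where by definition of $\eta_n$ we have $\eta_n(w)=\eta(d)$ if $w=\gamma d$ with $\gamma\in\Gamma_n$, $d\in D_n$; since $D_{n+1}=\bigcup_{v\in D_{n+1}\cap\Gamma_n} vD_n$ by Proposition \ref{decom}(4), every $w\in D_{n+1}$ has a unique such decomposition $w=\gamma d$ with $\gamma\in D_{n+1}\cap\Gamma_n$, $d\in D_n$, and there $\eta_n(w)=\eta(d)$. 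So the goal reduces to: for all $\gamma\in D_{n+1}\cap\Gamma_n$ and all $d\in D_n$, $\eta(\gamma_0\gamma d)=\eta(d)$.

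Next I would split $d\in D_n$ using the $J$-decomposition. Since $D_n\subseteq\bigcup_{i=0}^{n}J(i)\Gamma_{i+1}$ (the remark after (\ref{J(n)})), write $d=u\beta$ with $u\in J(i)$, $\beta\in\Gamma_{i+1}$ for some $0\le i\le n$. The patching relation (\ref{Patching}) directly handles the case $i=n$ (when $d\in J(n)$, i.e. $\beta=1_G$), giving $\eta(\gamma_0\gamma u)=\eta(u)$. For $i<n$ I would argue as follows: because $\gamma_0$ satisfies (\ref{good-relation}), Lemma \ref{good-relation1} (its "furthermore" clause) gives $\gamma_0 D_n\subseteq D_m\setminus(D_n\Gamma_{n+1}\cup\cdots)$ — more importantly, I will use that $\gamma_0\in\Gamma_{i+1}$ for every $i\le n$ (since $\gamma_0\in\Gamma_{m+1}\subseteq\Gamma_{n}\subseteq\Gamma_{i+1}$) and $\gamma\in\Gamma_n\subseteq\Gamma_{i+1}$, so $\gamma_0\gamma\in\Gamma_{i+1}$; then by Proposition \ref{Per-eq}, since $J(i)\subseteq\Per(\eta,\Gamma_{i+1})\setminus\Per(\eta,\Gamma_i)$ holds for all $i$, we have $u\in\Per(\eta,\Gamma_{i+1})$, and hence $\eta(\gamma_0\gamma u\beta)=\eta(u\beta)=\eta(d)$ using that left-multiplication by an element of $\Gamma_{i+1}$ (after conjugating, using normality) preserves the value of $\eta$ on a $\Gamma_{i+1}$-periodic coordinate. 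I would be careful here: $\Per(\eta,\Gamma_{i+1})$ is a union of cosets $\Gamma_{i+1}g$, so $g\in\Per(\eta,\Gamma_{i+1})$ and $\gamma'\in\Gamma_{i+1}$ imply $\eta(\gamma' g)=\eta(g)$; applying this with $g=d=u\beta$ (noting $d\in\Per(\eta,\Gamma_{i+1})$ since $u$ is and $\beta\in\Gamma_{i+1}$) and $\gamma'=\gamma_0\gamma$ closes this case.

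Having verified $\eta(\gamma_0\gamma d)=\eta(d)$ for all $\gamma\in D_{n+1}\cap\Gamma_n$, $d\in D_n$, I conclude $(\sigma^{\gamma_0^{-1}}\eta)(w)=\eta_n(w)$ for all $w\in D_{n+1}$, i.e. $\sigma^{\gamma_0^{-1}}\eta\in U_n$; and since $\gamma_0^{-1}\in G$, this element lies in $O_\sigma(\eta)$, so $U_n\cap O_\sigma(\eta)\neq\emptyset$. The main obstacle I anticipate is the bookkeeping in the case $i<n$: making sure the decomposition $d=u\beta$ interacts correctly with normality of the $\Gamma_j$'s and that the relevant periodicity $u\in\Per(\eta,\Gamma_{i+1})$ genuinely follows from Proposition \ref{Per-eq} applied to the standing hypothesis on $\eta$ — in particular checking that (\ref{Patching}) is only needed for the top layer $i=n$ while all lower layers are handled for free by periodicity, since $\gamma_0$ and $\gamma$ both sit deep enough in the subgroup chain to be absorbed by the period subgroup $\Gamma_{i+1}$.
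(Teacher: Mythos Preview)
Your proof is correct and follows the same strategy as the paper's: reduce to $\eta(\gamma_0\gamma d)=\eta(d)$ for $\gamma\in D_{n+1}\cap\Gamma_n$ and $d\in D_n$, then split $d$ into the $\Gamma_n$-periodic part (handled by $\gamma_0\gamma\in\Gamma_n$) and the top layer $J(n)$ (handled by (\ref{Patching})). The paper uses the coarser split $D_n=(D_n\cap\Per(\eta,\Gamma_n))\cup J(n)$ coming from Proposition~\ref{Per-eq} instead of your finer $J$-layer decomposition $d=u\beta$ with $u\in J(i)$, but since $\bigcup_{i<n}J(i)\Gamma_{i+1}=\Per(\eta,\Gamma_n)$ these are the same partition and the normality concern you flag is resolved exactly as you indicate.
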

\begin{proof}
 Let $\gamma_0\in \Gamma_{n+1}\cap D_m$ be an element satisfying the relation (\ref{good-relation}). 
    We aim to prove that $\sigma^{\gamma_0^{-1}}\eta\in U_n$.
     Lemma \ref{decom} implies $D_{n+1}=\bigcup_{\gamma\in D_{n+1}\cap\Gamma_n}\gamma D_n$, and (\ref{T1}) implies $D_n=(D_n\cap\Per(\eta,\Gamma_n))\cup J(n)$.
    Since $D_n\subseteq \Per(\eta,\Gamma_{n+1}) $, we have 
 \begin{equation}\label{eq2}
 \eta(\gamma_0u)=\eta(u) \mbox{ for }u\in D_n.
 \end{equation}
 For $u\in D_n\cap\Per(\eta,\Gamma_n)$ and $\gamma\in D_{n+1}\cap \Gamma_n$, we have that $\gamma_0\gamma\in \Gamma_n$
    and, as a result,
 \begin{equation}\label{eq3}
 \eta(\gamma_0\gamma u)=\eta(u).
 \end{equation}
  Let $u\in J(n)$ and $\gamma\in D_{n+1}\cap\Gamma_n\setminus\{1_G\}$. Thus, according to Lemma \ref{auxiliar0}, we have
 $\gamma_0\gamma u\in J(m)$. Since $\gamma_0$ satisfies (\ref{Patching}), we have
 \begin{align}\label{eq4}
     \eta(\gamma_0\gamma u)=\eta(u).
 \end{align}
 By combining (\ref{eq2}), (\ref{eq3}) and (\ref{eq4}), we deduce
 $$
 \eta(\gamma_0\gamma u)=\eta(u), \mbox{ for every }u\in D_n\mbox{ and } \gamma\in D_{n+1}\cap \Gamma_n. 
 $$ 
This implies that $\sigma^{\gamma_0^{-1}}\eta (w)=\eta(u)=\eta_n(w)$ for $w\in D_{n+1}$, $u\in D_n$ and $\gamma\in D_{n+1}\cap\Gamma_n$ such that $w=\gamma u$. This implies the result.
\end{proof}

 \begin{proposition}\label{Supp-mu}

 Let $\eta\in \Sigma^G$ be a Toeplitz sequence with a period structure $(\Gamma_n)_{n\in\mathbb{N}}$ such that $J(n)\subseteq \Per(\eta,\Gamma_{n+1})\setminus\Per(\eta,\Gamma_n)$ for every $n\in\mathbb{N}$.
If there is an increasing sequence  
 $\mathcal{M}=(n_k)_{k\in\mathbb{N}}\subseteq\mathbb{Z}^+$ such that for every $n_{s},n_{j}\in\mathcal{M}$,  with $n_{j}>n_{s}+2$, there exists $\gamma_0\in\Gamma_{n_{s}+1}\cap D_{n_{j}}$ that satisfies (\ref{good-relation}) and 
\begin{align*}
    \eta(\gamma_0\gamma u)=\eta(u) \mbox{ for every }\gamma\in\Gamma_{n_{s}}\cap D_{n_{s}+1} \mbox{ and  } u\in J(n_{s}),
\end{align*}
then, every limit point of $(\mu_{n_k})_{k\in\mathbb{N}}$ is supported in $\overline{O_\sigma(\eta)}$.
 \end{proposition}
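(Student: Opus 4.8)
\emph{Proof idea.} For $m\in\mathbb N$ let $W_m\subseteq\Sigma^G$ be the set of configurations $x$ whose pattern on $D_m$ occurs in $\eta$, i.e. $x|_{D_m}=(\sigma^g\eta)|_{D_m}$ for some $g\in G$. Since $\Sigma$ is finite, $W_m$ is a finite union of cylinders, hence clopen; and since $\bigcup_mD_m=G$ and the $W_m$ decrease, $\overline{O_\sigma(\eta)}=\bigcap_{m\in\mathbb N}W_m$. Therefore a limit point $\mu$ of $(\mu_{n_k})_k$ is supported in $\overline{O_\sigma(\eta)}$ as soon as $\mu(W_m)=1$ for every $m$; and because $W_m$ is clopen and $\mu$ is the weak-$*$ limit of a subsequence of $(\mu_{n_k})_k$, it is enough to prove that $\mu_{n_k}(W_m)\to1$ for each fixed $m$.

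Fix $m$ and $n_k\in\mathcal M$, pick any $n_j\in\mathcal M$ with $n_j>n_k+2$, and apply Lemma \ref{good-patches} with $n=n_k$ and $m=n_j$ (the hypotheses on $\mathcal M$ are designed precisely for this): there is $\gamma_0\in\Gamma_{n_k+1}\cap D_{n_j}$ with $\sigma^{\gamma_0^{-1}}\eta\in U_{n_k}$, i.e. $\eta(\gamma_0v)=\eta_{n_k}(v)$ for all $v\in D_{n_k+1}$. Thus on the large set $\gamma_0D_{n_k+1}$ the array $\eta$ reproduces a translate of the $\Gamma_{n_k}$-periodic array $\eta_{n_k}$. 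Now suppose $d\in D_{n_k}$ admits some $\gamma\in\Gamma_{n_k}$ with $\gamma dD_m\subseteq D_{n_k+1}$. Using the $\Gamma_{n_k}$-invariance of $\eta_{n_k}$ and the agreement above,
\[
(\sigma^{d^{-1}}\eta_{n_k})(w)\ =\ \eta_{n_k}(dw)\ =\ \eta_{n_k}(\gamma dw)\ =\ \eta(\gamma_0\gamma dw)\ =\ (\sigma^{(\gamma_0\gamma d)^{-1}}\eta)(w)\qquad (w\in D_m),
\]
so $(\sigma^{d^{-1}}\eta_{n_k})|_{D_m}$ coincides with the $D_m$-pattern of the orbit point $\sigma^{(\gamma_0\gamma d)^{-1}}\eta\in O_\sigma(\eta)$, whence $\sigma^{d^{-1}}\eta_{n_k}\in W_m$. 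As $\mu_{n_k}=|D_{n_k}|^{-1}\sum_{d\in D_{n_k}}\delta_{\sigma^{d^{-1}}\eta_{n_k}}$, this gives
\[
1-\mu_{n_k}(W_m)\ \le\ \frac{|B_{n_k}|}{|D_{n_k}|},\qquad B_{n_k}:=\bigl\{\,d\in D_{n_k}:\ \gamma dD_m\not\subseteq D_{n_k+1}\ \text{for every }\gamma\in\Gamma_{n_k}\,\bigr\}.
\]

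The remaining point — and the one I expect to require the real work — is that $|B_{n_k}|/|D_{n_k}|\to0$ along $\mathcal M$. Here one has available only the compatible fundamental domains of Proposition \ref{decom} together with the special structure of $\eta$. Writing $D_{n_k}=\bigsqcup_{v\in D_{n_k}\cap\Gamma_m}vD_m$ and $D_{n_k+1}=\bigsqcup_{v\in D_{n_k+1}\cap\Gamma_m}vD_m$ via Proposition \ref{decom}(2),(4), a point $d=vd'$ lies in $B_{n_k}$ only when its $\Gamma_m$-tile $vD_m$ sits near a ``seam'' of $D_{n_k+1}$ whose direction is not realized inside $D_{n_k+1}$, and one must count these tiles. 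The hypothesis enters again through Proposition \ref{Per-eq} and (\ref{T1}), which yield $D_n\subseteq\Per(\eta,\Gamma_{n+1})$, so that the block $\eta|_{D_{n_k}}$ recurs $\Gamma_{n_k+1}$-periodically in $\eta$ and hence the ``wrapped'' seam windows of $\eta_{n_k}$ also occur in $\eta$; if necessary one first refines $\mathcal M$ to a sparser subsequence so that these seam contributions become negligible. Granting $|B_{n_k}|/|D_{n_k}|\to0$, we obtain $\mu_{n_k}(W_m)\to1$ for each $m$, and therefore $\mu\bigl(\overline{O_\sigma(\eta)}\bigr)=\mu\bigl(\bigcap_mW_m\bigr)=1$, which is the assertion.
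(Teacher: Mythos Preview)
Your overall architecture is the paper's argument, just phrased dually: the paper shows that every basic cylinder $C$ with $\mu(C)>0$ meets $O_\sigma(\eta)$ (hence $\operatorname{supp}\mu\subseteq\overline{O_\sigma(\eta)}$), while you show $\mu(W_m)=1$ for each $m$. These are equivalent reformulations, and in both cases Lemma~\ref{good-patches} is the only real input: one obtains $\gamma_0$ with $\eta(\gamma_0 v)=\eta_{n_k}(v)$ for all $v\in D_{n_k+1}$, so every $D_m$-pattern of $\eta_{n_k}$ seen \emph{inside} $D_{n_k+1}$ occurs in $\eta$.

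The gap is in your last paragraph. You believe that handling the ``bad'' set $B_{n_k}$ requires a F{\o}lner-type seam count and an appeal to $D_n\subseteq\Per(\eta,\Gamma_{n+1})$; neither works and neither is needed. A boundary-density estimate cannot be expected here because $G$ is not assumed amenable, and the periodicity $D_{n_k}\subseteq\Per(\eta,\Gamma_{n_k+1})$ concerns $\Gamma_{n_k+1}$, whereas the ``wrapped'' $D_m$-windows of $\eta_{n_k}$ are produced by its $\Gamma_{n_k}$-periodicity, so that observation gives no control over those seam patterns. The paper resolves this step in one line: after choosing $l$ with $n_{k_l}\ge n$, it records that one may assume $D_{n_{k_l}}\cdot D_n\subseteq D_{n_{k_l}+1}$ (this is harmless because the sequence $(D_i)$ from Proposition~\ref{decom} can always be taken sparse enough). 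In your language this simply means $B_{n_k}=\emptyset$: for every $d\in D_{n_k}$ the choice $\gamma=1_G$ already gives $dD_m\subseteq D_{n_k}D_m\subseteq D_{n_k+1}$, hence $\mu_{n_k}(W_m)=1$ exactly, not just asymptotically. Once you use this instead of the seam count, your argument is complete and coincides with the paper's.
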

 \begin{proof}
Let $\mu$ be a limit point of $(\mu_{n_k})_{k\in\mathbb{N}}$. 
Therefore, there exists a subsequence $\mathcal{N}=(n_{k_j})_{j\in\mathbb{N}}$ of $\mathcal{M}$ such that $(\mu_{n_{k_j}})_{j\in\mathbb{N}}$ converges to $\mu$, when $j\to\infty$.
The set of  cylinders of the form ${V=\{y\in\{0,1\}^G: y(s)=Q(s), s\in S\}}$, where $S$ is a finite subset of $G$ and $Q\in\{0,1\}^S$, forms a basis for the topology of $\{0,1\}^G$. If there exists a cylinder $V$ as described above with $\mu(V)>0$, then there exists $n\in\mathbb{N}$ and a cylinder 
\begin{align}\label{C}
    C=\{y\in\{0,1\}: y(v)=P(v), v\in D_n\}, P\in\{0,1\}^{D_n}
\end{align} 
satisfying $\mu(C)>0$. Let $C$ be a cylinder as in (\ref{C}) with $\mu(C)>0$. Our goal is to prove that there exists an element in $\overline{O_\sigma(\eta)}$ that belongs to $C$.
    Since the sequence $(\mu_{n_{k_j}})_{j\in\mathbb{N}}$ converges to $\mu$, when $j\to\infty$, there exists $j_0\in\mathbb{N}$ such that $\mu_{n_{k_l}}(C)>0$ for every $l\geq j_0$. Hence, ${O_\sigma(\eta_{n_{k_l}})\cap C\neq\emptyset}$. 
     
     Choose $l\geq j_0$ such that $n_{k_l}\geq n$.
     There exists $u\in D_{n_{k_l}}$ such that ${\sigma^{u^{-1}}\eta_{n_{k_l}}(v)=P(v)}$ for every $v\in D_n$. 
     Moreover,  observe that $uD_n\subseteq D_{n_{k_l}}\cdot D_n $, and we can assume that ${D_{n_{k_l}}\cdot D_n\subseteq D_{n_{k_l}+1}}$.  
    According to  Lemma \ref{good-patches}, there exists $g\in G$ such that $\sigma^{g^{-1}}\eta\in U_{n_{k_l}}$. 
     Therefore, $\sigma^{u^{-1}g^{-1}}\eta(v)=\sigma^{g^{-1}}\eta(uv)=\eta_{n_{k_l}}(uv)=P(v)$ for every $v\in D_n$, and we conclude.
 \end{proof}

\section{Irregular Toeplitz subshifts}\label{sec:irregular-construction}
Inspired by the ideas presented in \cite[Example 5.1]{DoKa15} and \cite{CeCoGo23}, we
provide a proof of Theorem \ref{theo:main1} in the remaining Sections.

\medskip

Let $(\Gamma_n)_{n\in\mathbb{N}}$ be a decreasing sequence of finite index normal subgroups  of $G$ with trivial intersection.
Let $(D_n)_{n\in\mathbb{N}}$ be a sequence of finite subsets of $G$ as in Lemma \ref{decom} with (possibly after taking a subsequence)  $t_i=i$. 

\medskip

The following Proposition characterizes the regularity of certain Toeplitz arrays.

\begin{proposition}\label{regular-eta} Let $\eta\in\Sigma^G$ be a Toeplitz sequence with period structure $(\Gamma_n)_{n\in\mathbb{N}}$ such that $J(n)\subseteq \Per(\eta,\Gamma_{n+1})\setminus\Per(\eta,\Gamma_n)$ for every $n\in\mathbb{N}$.
    For each $n\in\mathbb{N}$, we have that
    \begin{align*}
        1-d_{n+1}=\left(1-\dfrac{1}{|D_1|}\right)\prod_{j=1}^n\left(1-\dfrac{|D_j|}{|D_{j+1}|}\right),
    \end{align*}
    with $d_n$ defined as in subsection \ref{Regular-T}.
\end{proposition}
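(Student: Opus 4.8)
The plan is to reduce the identity to a purely combinatorial count of the sets $J(n)$ and then telescope a one-step recursion.

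First I would observe that, under the standing hypothesis, Proposition \ref{Per-eq} gives $\Per(\eta,\Gamma_{n+1})=\bigcup_{i=0}^{n} J(i)\Gamma_{i+1}$, so $D_{n+1}\cap\Per(\eta,\Gamma_{n+1})=D_{n+1}\cap\bigcup_{i=0}^{n} J(i)\Gamma_{i+1}$. On the other hand, applying the defining relation (\ref{J(n)}) with $n+1$ in place of $n$, the set $J(n+1)$ is exactly the complement inside $D_{n+1}$ of this intersection; hence $D_{n+1}$ is the disjoint union of $J(n+1)$ and $D_{n+1}\cap\Per(\eta,\Gamma_{n+1})$. Taking cardinalities and dividing by $|D_{n+1}|$, together with the definition $d_{n+1}=|D_{n+1}\cap\Per(\eta,\Gamma_{n+1})|/|D_{n+1}|$, yields
$$1-d_{n+1}=\frac{|J(n+1)|}{|D_{n+1}|}.$$
So it suffices to show that $|J(n+1)|/|D_{n+1}|$ equals the claimed product.

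Next I would extract a recursion for $|J(n)|$ from Proposition \ref{auxiliar0}, which expresses $J(n+1)=\bigcup_{\gamma\in(D_{n+1}\cap\Gamma_n)\setminus\{1_G\}}\gamma J(n)$. The point to verify is that this union is disjoint and that all translates $\gamma J(n)$ have cardinality $|J(n)|$: since $J(n)\subseteq D_n$, this follows from Proposition \ref{decom}(4), which gives $D_{n+1}=\bigsqcup_{\gamma\in D_{n+1}\cap\Gamma_n}\gamma D_n$ as a disjoint union (because $D_{n+1}$ is a fundamental domain for $G/\Gamma_{n+1}$). In particular $|D_{n+1}\cap\Gamma_n|=|D_{n+1}|/|D_n|$, and therefore
$$|J(n+1)|=\left(\frac{|D_{n+1}|}{|D_n|}-1\right)|J(n)|,\qquad\text{i.e.}\qquad \frac{|J(n+1)|}{|D_{n+1}|}=\left(1-\frac{|D_n|}{|D_{n+1}|}\right)\frac{|J(n)|}{|D_n|}.$$

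Finally I would compute the base case: $J(1)=D_1\setminus J(0)\Gamma_1=D_1\setminus\Gamma_1=D_1\setminus\{1_G\}$, since $1_G\in D_1$ and $D_1$ meets $\Gamma_1$ only in $1_G$ (being a fundamental domain for $G/\Gamma_1$), so $|J(1)|/|D_1|=1-1/|D_1|$. Iterating the one-step recursion from $n=1$ up to $n$ then telescopes to $|J(n+1)|/|D_{n+1}|=\left(1-\tfrac{1}{|D_1|}\right)\prod_{j=1}^{n}\left(1-\tfrac{|D_j|}{|D_{j+1}|}\right)$, which is the assertion. The only delicate points — and the closest thing to an obstacle — are the bookkeeping of disjointness in the two unions (for $\Per(\eta,\Gamma_{n+1})$ and for $J(n+1)$) and the identification $|D_{n+1}\cap\Gamma_n|=|D_{n+1}|/|D_n|$; everything else is a direct telescoping.
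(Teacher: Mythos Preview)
Your proposal is correct and follows essentially the same strategy as the paper: both establish the one-step recursion $1-d_{n+1}=(1-|D_n|/|D_{n+1}|)(1-d_n)$ and then telescope. The only cosmetic difference is that you first identify $1-d_{n+1}=|J(n+1)|/|D_{n+1}|$ and count $|J(n+1)|$ via Proposition~\ref{auxiliar0}, whereas the paper computes $|D_{n+1}\cap\Per(\eta,\Gamma_{n+1})|$ directly by splitting it into the $\Gamma_n$-periodic part $(D_{n+1}\cap\Gamma_n)\cdot(D_n\cap\Per(\eta,\Gamma_n))$ and the new part $J(n)$; these are dual counts of the same partition of $D_{n+1}$.
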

\begin{proof}
     Proposition \ref{Per-eq} implies 
     \begin{align*}
        [\Per(\eta,\Gamma_{n+1})\setminus \Per(\eta,\Gamma_n)]\cap D_{n+1}=J(n)=D_n\setminus(D_n\cap\Per(\eta,\Gamma_n)).
    \end{align*}
    Therefore,
    \begin{align*}
        d_{n+1}&=\dfrac{|D_{n+1}\cap\Per(\eta,\Gamma_{n+1})|}{|D_{n+1}|}\\
        &=\dfrac
{|D_n\cap\Per(\eta,\Gamma_n)||D_{n+1}\cap \Gamma_n|}{|D_{n+1}|}+\dfrac{|D_n|-|D_n\cap\Per(\eta,\Gamma_n)|}{|D_{n+1}|}\\
&=\dfrac{|D_n\cap\Per(\eta,\Gamma_n)|}{|D_n|}+\dfrac{|D_n|}{|D_{n+1}|}\left(1-\dfrac{|D_n\cap\Per(\eta,\Gamma_n)|}{|D_n|}\right)\\
&=d_n+\dfrac{|D_n|}{|D_{n+1}|}(1-d_n).
    \end{align*}
    Thus,
    \begin{align*}
        1-d_{n+1}=(1-d_n)\left(1-\dfrac{|D_n|}{|D_{n+1}|}\right).
    \end{align*}
    Using  induction on $n$ we can conclude.
\end{proof}
\begin{remark}\label{irregularity}
Recall that 
\begin{align*}
    0<\left(1-\dfrac{1}{|D_1|}\right)\prod_{j=1}^\infty\left(1-\dfrac{|D_j|}{|D_{j+1}|}\right)\leq 1\mbox{ if and only if }\dfrac{1}{|D_1|}+\sum_{j=1}^\infty \dfrac{|D_{j}|}{|D_{j+1}|}\mbox{ converges}.
\end{align*}
Moreover, a computation provides that if $L:=\dfrac{1}{|D_1|}+\sum_{j=1}^\infty \dfrac{|D_j|}{|D_{j+1}|}<\infty$, then
\begin{align*}
    e^{-2L}\leq \left(1-\dfrac{1}{|D_1|}\right)\prod_{j=1}^\infty \left(1-\dfrac{|D_j|}{|D_{j+1}}\right)\leq 1.
\end{align*}

Therefore, Proposition \ref{regular-eta} and Subsection \ref{Regular-T} imply that $\eta$ is regular if and only if the series $L$ diverges.
\end{remark}

\subsection{Construction of irregular Toeplitz arrays}\label{Irregular-arr}

From now on, we  assume that the decreasing sequence of normal finite index subgroups $(\Gamma_n)_{n\in\mathbb{N}}$ of $G$ satisfies: 
\begin{enumerate}
    \item  $L=\frac{1}{|D_1|}+\sum_{j=1}^\infty \frac{|D_j|}{|D_{j+1}|}$  converges.
\item\label{LL}
    $1-e^{-2L}<\frac{1}{4}$.
\end{enumerate}

\medskip

 We define $\eta\in\{0,1\}^G$ as follows:

\medskip

{\bf 1\textsuperscript{st} Step:} Set $J(0)=\{1_G\}$ and let $\eta(g)=1$ for every $g\in\Gamma_1$.\\

{\bf 2\textsuperscript{nd} Step:} We define $J(1)=D_1\setminus \Gamma_1$ and $\eta(g)=0$ for every $g\in J(1)\Gamma_2$. Let $m(1)=|J(1)|$. Consider $J(1)=\{g_1^1,g_2^1,\ldots,g_{m(1)}^1\}$.  \\

{\bf\bm{$s+1$}\textsuperscript{th} Step:} There exist $k\in\mathbb{N}$ and $0\leq s'\leq m(k)$ such that $s=m_{k-1}+s'$, which implies that $m_{k-1}\leq s<m_k$, where \begin{align}\label{n-k}
    m_k:=1+k+\sum_{i=0}^km(i),
\end{align} with $m(i):=|J(i)|$ and $m(0)=1$ for every $0\leq i\leq k$.  
Let $J(s)=D_s\setminus (\bigcup_{i=0}^{s-1}J(i)\Gamma_{i+1})$, and $J(k)=\{g_1^{k},g_2^{k},\ldots, g_{m(k)}^{k}\}$. 

 If $m_{k-1}<  s+1< m_{k}$, then it follows that $1\leq s'+1\leq m(k)$. 
 Choose $h_{s'+1}^{k}\in J(s)$ such that $h_{s'+1}^k\in g_{s'+1}^k \Gamma_{k}$. 
 Define $\eta(h_{s'+1}^k\gamma)=1$ and $\eta(g\gamma)=0$ for every $g\in J(s)\setminus\{h_{s'+1}^k\}$ and every $\gamma\in \Gamma_{s+1}$.
 
 If $s+1=m_k$, define $\eta(g\gamma)=0$ for every $g\in J(s)$ and $\gamma\in \Gamma_{s+1}$.

 \medskip

Since $h_{m(k)}^k\in D_{m_k-2}$ for every $k\in\mathbb{N}$, under taking a subsequence of $(\Gamma_n)_{n\in\mathbb{N}}$, we can suppose the following condition:
\begin{align}\label{Linking}
    v^{-1}h_{m(k)}^k\in D_{m_k}\mbox{ for every }v\in (\Gamma_{m_k-2}\cap D_{m_k-1})\setminus\{1_G\}.
\end{align}

\begin{exem}   This example aims to illustrate the previous construction. We consider $[G:\Gamma_1]=[\Gamma_1:\Gamma_2]=[\Gamma_2:\Gamma_3]=3$.  Therefore, ${D_1=\{1_G,g_1^1,g_2^1\}\subseteq G}$, $D_2\cap\Gamma_1=\{1_G,\gamma_1^1,\gamma_2^1\}$, and $D_3\cap \Gamma_2=\{1_G,\gamma_1^2,\gamma_2^2\}$.  Thus, ${D_2=D_1\cup\gamma_1^1 D_1\cup\gamma_2^1 D_1}$, and $D_3=D_2\cup\gamma_1^2 D_2\cup\gamma_2^2 D_2$. In the following figures, the group $G$ is interpreted as an infinite rectangle with infinite cells. 

\begin{figure}[h!]
\begin{tabular}[t]{V{2}c|c|c V{2}}
\hline
     \noalign{\vskip 0 pt}\clineB{1-3}{2}
     \multicolumn{1}{V{3} c |}{\tikzmarknode{D11}{\cellcolor{blue!50}$1$}}
     & & \multicolumn{1}{c V{3}}{} \\\hline
     \noalign{\vskip-1 pt}\clineB{1-3}{3}
     \tikzmarknode{D12}{$1$}&&\\\hline
  \tikzmarknode{D13}{$1$}&&\\\noalign{\vskip 0 pt}\clineB{1-3}{2}
\multicolumn{3}{c}{\smash{\vdots}}\\
\end{tabular}
\caption{In step 1 we have that $\eta(g)=1$ for every $g\in\Gamma_1$. $1_G$ is represented in blue.}
\begin{tikzpicture}[overlay,remember picture]
\node [left=4ex of D11](D11a){$D_1$};
\draw[->] (D11a) -- ([xshift=-2ex]D11.west); 
\node [left=4ex of D12](D12a){$\gamma^1_1D_1$};
\draw[->] (D12a) -- ([xshift=-2ex]D12.west); 
\node [left=4ex of D13](D13a){$\gamma^1_2D_1$};
\draw[->] (D13a) -- ([xshift=-2ex]D13.west);
\draw[decorate,decoration={brace}]
    (D13a.south west) --  ([yshift=2ex]D12a.north west)
    node [midway, left] {\footnotesize{$D_2$}};
\end{tikzpicture}
\end{figure}
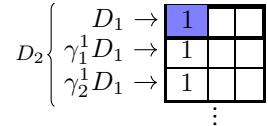

\begin{figure}[h!]
\begin{tabular}[t]{V{2}c|c|c V{2}}
\hline
     \noalign{\vskip 0 pt}\clineB{1-3}{2}
     \multicolumn{1}{V{3} c |}{\tikzmarknode{E11}{$1$}}
     &\cellcolor{blue!50} $0$& \multicolumn{1}{c V{3}}{$\cellcolor{blue!50}0$} \\\hline
     \multicolumn{1}{V{3} c |}{$1$}
    && \multicolumn{1}{c V{3}}{}\\\hline
  \multicolumn{1}{V{3} c |}{\tikzmarknode{E13}{$1$}}
     && \multicolumn{1}{c V{3}}{}\\\hline
     \noalign{\vskip 0 pt}\clineB{1-3}{3}
     \tikzmarknode{E21}{$1$}&$0$&$0$\\\hline
    $1$&&\\\hline
    \tikzmarknode{E23}{$1$}&&\\ \noalign{\vskip 0 pt}\clineB{1-3}{2}
    \tikzmarknode{E31}{$1$}&$0$&$0$\\\hline
    $1$&&\\\hline
    \tikzmarknode{E33}{$1$}&&\\ \noalign{\vskip 0 pt}\clineB{1-3}{2}
\multicolumn{3}{c}{\smash{\vdots}}\\
\end{tabular}
\caption{In step 2 we have that $\eta(d\gamma)=0$ for every $d\in J(1)$ and $\gamma\in\Gamma_2$. In this figure, $J(1)=\{g_1^1,g_2^1\}$ is represented in blue.}
\begin{tikzpicture}[overlay,remember picture]
   \draw[decorate,decoration={brace}]
    ([xshift=-2ex,yshift=-0.5ex]E13.south west) --  ([yshift=0.5ex,xshift=-2ex]E11.north west)
    node [midway, left] {\footnotesize{$D_2$}};
    \draw[decorate,decoration={brace}]
    ([xshift=-2ex,yshift=-0.5ex]E23.south west) --  ([yshift=0.5ex,xshift=-2ex]E21.north west)
    node [midway, left] {\footnotesize{$\gamma_1^2D_2$}};
    
    \draw[decorate,decoration={brace}]
    ([xshift=-2ex,yshift=-0.5ex]E33.south west) --  ([yshift=0.5ex,xshift=-2ex]E31.north west)
    node [midway, left] {\footnotesize{$\gamma_2^2D_2$}};
    
    \draw[decorate,decoration={brace}]
    ([xshift=-8ex,yshift=-0.5ex]E33.south west) --  ([yshift=0.5ex,xshift=-8ex]E11.north west)
    node [midway, left] {\footnotesize{$D_3$}};
\end{tikzpicture}
\end{figure}
\newpage

\begin{figure}[h!]
\begin{tabular}[t]{V{2}c|c|c V{2}}
\hline
     \noalign{\vskip 0 pt}\clineB{1-3}{2}
     $1$
     & $\cellcolor{blue!50}0$&$0$ \\\hline
     $1$
    &$1\cellcolor{red!50}$&$0$ \\\hline
  $1$
     &$0$&$0$ \\\hline
     \noalign{\vskip 0 pt}\clineB{1-3}{2}
     $1$&$0$&$0$\\\hline
    $1$&&\\\hline
   $1$&&\\ \noalign{\vskip 0 pt}\clineB{1-3}{2}
    $1$&$0$&$0$\\\hline
    $1$&&\\\hline
   $1$&&\\ \noalign{\vskip 0 pt}\clineB{1-3}{2}
\multicolumn{3}{c}{\smash{\vdots}}\\
\end{tabular}
\caption{We choose $h_1^1\in D_2$ (red cell) such that $g^1_1\Gamma_1=h_1^1\Gamma_1$. We define $\eta(h_1^1\gamma)=1$ and $\eta(g\gamma)=0$ for every $g\in D_2$ and $\gamma\in\Gamma_3$. $g_1^1$ is represented in blue.}
\end{figure}

\begin{figure}[h!]
\begin{tabular}[t]{V{2}c|c|c V{2}}
\hline
     \noalign{\vskip 0 pt}\clineB{1-3}{2}
     $1$
     & $0$&$\cellcolor{blue!50}0$ \\\hline
     $1$
    &$1$&$0$ \\\hline
  $1$
     &$0$&$0$ \\\hline
     \noalign{\vskip 0 pt}\clineB{1-3}{2}
     $1$&$0$&$0$\\\hline
    $1$&$0$&$0$\\\hline
   $1$&$0$&$\cellcolor{red!50}1$\\ \noalign{\vskip 0 pt}\clineB{1-3}{2}
    $1$&$0$&$0$\\\hline
    $1$&$0$&$0$\\\hline
   $1$&$0$&$0$\\ \noalign{\vskip 0 pt}\clineB{1-3}{2}
\multicolumn{3}{c}{\smash{\vdots}}\\
\end{tabular}
\caption{
We choose $h_2^1\in D_3$ (red cell) such that $g_2^1\Gamma_1=h_2^1\Gamma_1$. We define $\eta(h_2^1\gamma)=1$ and $\eta(g\gamma)=0$ for every $g\in D_3$ and $\gamma\in \Gamma_4$. $g_2^1$ is represented in blue.}
\end{figure}
In Step 5, it is defined $\eta(g\Gamma_5)=0$ for every $g\in D_4$ and $\gamma\in \Gamma_5$. We repeat the process presented in the figures but using  $J(2)$.
\end{exem}

 This construction gives a Toeplitz sequence in $\{0,1\}^G$. Indeed,
let $g\in G$ and $n\in\mathbb{N}$  such that $g\in D_n$. 
If $g\in J(n)$, then our construction applies directly. Otherwise, if $g\notin J(n)$, by definition of $J(n)$, we get that $g\in \bigcup_{i=0}^{n-1} J(i)\Gamma_{i+1}$.

\begin{proposition}\label{period-structure}
    $(\Gamma_n)_{n\in\mathbb{N}}$ is a period structure for the Toeplitz array $\eta$.
\end{proposition}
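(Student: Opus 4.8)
The plan is to show that $(\Gamma_n)_{n\in\mathbb N}$ satisfies the defining conditions of a period structure: each $\Gamma_n$ is an essential group of periods of $\eta$, the sequence is decreasing, and $G=\bigcup_n\Gamma_n$. The last two requirements are immediate from the standing hypotheses on $(\Gamma_n)_{n\in\mathbb N}$, so the work is in verifying that each $\Gamma_n$ is a group of periods of $\eta$ and, more substantially, that it is \emph{essential}. First I would record what the construction gives directly at each stage: after the step in which we have processed $J(s)$ with $m_{k-1}\le s<m_k$, every $g\in D_s$ lies in some $J(i)\Gamma_{i+1}$ with $i\le s$, and on the whole coset $g\Gamma_{s+1}$ the value of $\eta$ is constant (either the value $1$ forced on $\Gamma_1$-translates of $h^{k}_{s'+1}$, or $0$). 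Hence by construction $J(i)\Gamma_{i+1}\subseteq\Per(\eta,\Gamma_{i+1})$ for every $i$, so that $\bigcup_{i=0}^{n-1}J(i)\Gamma_{i+1}\subseteq\Per(\eta,\Gamma_n)$; in particular $\Gamma_n\subseteq\Per(\eta,\Gamma_n)$, so $\Gamma_n$ is a group of periods. I would also note that $J(n)\subseteq\Per(\eta,\Gamma_{n+1})\setminus\Per(\eta,\Gamma_n)$: on $J(n)$ the construction makes $\eta$ constant on $\Gamma_{n+1}$-cosets, but a point of $J(n)$ (say $g^k_j$ for the appropriate $k$) is the translate chosen to receive value $1$ at only one $\Gamma_{n}$-subcoset and $0$ elsewhere, so it is not $\Gamma_n$-periodic. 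By Proposition \ref{Per-eq} this yields the clean description $\Per(\eta,\Gamma_n)=\bigcup_{i=0}^{n-1}J(i)\Gamma_{i+1}$, which I expect to use repeatedly.

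Next I would prove essentiality of $\Gamma_n$. Suppose $g\in G$ satisfies $\Per(\eta,\Gamma_n,\alpha)\subseteq\Per(\sigma^g\eta,\Gamma_n,\alpha)$ for both $\alpha\in\{0,1\}$; since both sides of the inclusion are unions of $\Gamma_n$-cosets and $\Gamma_n$ is normal, and the two sets $\Per(\eta,\Gamma_n,0)$, $\Per(\eta,\Gamma_n,1)$ partition $\Per(\eta,\Gamma_n)$, the inclusions must in fact be equalities: $g\Per(\eta,g^{-1}\Gamma_n g,\alpha)=\Per(\eta,\Gamma_n,\alpha)$, i.e. $g\Per(\eta,\Gamma_n,\alpha)=\Per(\eta,\Gamma_n,\alpha)$ by normality. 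So $g$ stabilizes the set $\Per(\eta,\Gamma_n)=\bigcup_{i<n}J(i)\Gamma_{i+1}$ and preserves the partition of it into the fiber over value $1$ and the fiber over value $0$. I would then argue that this forces $g\in\Gamma_n$. The idea is to locate the unique $\Gamma_1$-coset on which $\eta\equiv1$: that coset is $\Gamma_1$ itself, and it is a union of $\Gamma_n$-cosets contained in $\Per(\eta,\Gamma_n,1)$, whereas — because at every subsequent stage only a single $\Gamma_1$-subcoset of the processed block receives a $1$ and the rest receive $0$, and the "$1$" positions sit along $\Gamma_1$ — the structure of $\Per(\eta,\Gamma_n,1)$ relative to $\Per(\eta,\Gamma_n,0)$ pins down $g\Gamma_1=\Gamma_1$, hence $g\in\Gamma_1$. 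Then I would iterate: having $g\in\Gamma_{m}$ for some $m<n$, examine the subcoset structure of $\Per(\eta,\Gamma_{m+1})$ inside $\Gamma_m$, where again a single $\Gamma_{m+1}$-subcoset carries the value-$1$ pattern and the rest carry $0$, to conclude $g\Gamma_{m+1}=\Gamma_{m+1}$, i.e. $g\in\Gamma_{m+1}$; after $n-1$ steps this gives $g\in\Gamma_n$, as desired.

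The main obstacle I anticipate is exactly this last inductive argument: making rigorous the claim that a $\Gamma_n$-coset-preserving bijection of $\Per(\eta,\Gamma_n)$ respecting the $0/1$ fibers must be translation by an element of $\Gamma_n$. The combinatorial content is that the "tower" $J(0)\Gamma_1\subseteq J(0)\Gamma_1\cup J(1)\Gamma_2\subseteq\cdots$ is rigid because at each level the value-$1$ cells are a single thin coset asymmetrically placed; one should phrase this as: $\sigma^g\eta$ and $\eta$ agree on $\Per(\eta,\Gamma_n)$ (up to the inclusion), so $g^{-1}\cdot$ maps the minimal nonempty "1-level" of $\eta$ to itself, and that level is $\Gamma_1\setminus\bigl(\text{lower-index }0\text{-material}\bigr)$, which has $\Gamma_1$ as its $\Gamma_1$-coset — hence $g\in\Gamma_1$ — and then proceed by descending through the $\Gamma_j$'s. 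Alternatively, one can invoke the general fact that a period structure exists for any Toeplitz array together with the observation that the $J$-decomposition already forces $\Per(\eta,\Gamma_n)=\bigcup_{i<n}J(i)\Gamma_{i+1}$; since an essential period structure $(\Gamma'_m)$ must satisfy $\Per(\eta,\Gamma'_m)\supseteq\Gamma'_m$ and the $\Gamma_n$ already realize all the distinct period sets arising, a minimality/uniqueness argument for essential groups of periods (the subgroup $\mathrm{Stab}$ of the partition $\{\Per(\eta,\Gamma_n,\alpha)\}_\alpha$) identifies it with $\Gamma_n$. I would write out the direct descending argument, as it is the most transparent and uses only the explicit construction of $\eta$.
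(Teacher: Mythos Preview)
The paper does not actually give a proof here: it defers entirely to \cite[Proposition~4.3]{CeCoGo23}. Your plan is the natural direct argument and matches what that reference carries out, so in that sense you are on the right track.

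Two remarks are worth making. First, a small slip that you inherited from a typo in the paper's Section~\ref{subsec:deftoeplitzodometers}: the third defining condition for a period structure is not $G=\bigcup_{n}\Gamma_n$ (which fails for any strictly decreasing chain) but $G=\bigcup_{n}\Per(\eta,\Gamma_n)$. This is not ``immediate from the standing hypotheses'' on $(\Gamma_n)$; it follows because $\eta$ is a Toeplitz array, which you already checked.

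Second, on the essentiality descent. Your reduction from the inclusion hypothesis to the equalities $g\Per(\eta,\Gamma_n,\alpha)=\Per(\eta,\Gamma_n,\alpha)$ is correct (both sides are finite unions of the same number of $\Gamma_n$-cosets). The base step $g\in\Gamma_1$ also goes through cleanly: writing $g=d\gamma$ with $d\in D_1$, $\gamma\in\Gamma_1$, if $d\neq 1_G$ then $d\in J(1)\subseteq\Per(\eta,\Gamma_2,0)$ while $d\in d\Gamma_1=g\Gamma_1\subseteq\Per(\eta,\Gamma_n,1)$, a contradiction. However, your description of the inductive step (``a single $\Gamma_{m+1}$-subcoset carries the value-$1$ pattern and the rest carry $0$'') is not quite accurate, since all of $\Gamma_m\subseteq\Gamma_1$ lies in $\Per(\eta,\Gamma_n,1)$. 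The descent must instead use the designated witnesses $h^k_t$ of the construction: each $h^k_t$ lies in $\Per(\eta,\Gamma_{n_{k-1}+1+t},1)$ but in no coarser period set for value $1$, and this is precisely what pins down $g\in\Gamma_{m+1}$ from $g\in\Gamma_m$. The argument is essentially the one carried out later in the proof of Lemma~\ref{Good-D's} (the inductive ``Case~1/Case~2'' analysis there), so you can model your write-up directly on that.
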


\begin{proof}
See proof of \cite[Proposition 4.3]{CeCoGo23}.
\end{proof}

\begin{proposition}\label{J-sub}
    Let $\eta\in\{0,1\}^G$ be the Toeplitz array defined above. For every $n$, it holds $J(n)\subseteq\Per(\eta,\Gamma_{n+1})\setminus\Per(\eta,\Gamma_n)$.
\end{proposition}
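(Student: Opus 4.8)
The claim is that $J(n)\subseteq\Per(\eta,\Gamma_{n+1})\setminus\Per(\eta,\Gamma_n)$ for every $n$, where $\eta$ is the explicitly constructed array. The plan is to verify the two required containments separately, by unwinding exactly what the step-by-step construction does to an element $g\in J(n)$. First I would recall the bookkeeping: by the definition of $m_k$ in \eqref{n-k}, the indices $n$ with $m_{k-1}\le n<m_k$ are precisely the $m(k)$ steps during which the construction ``processes'' the set $J(k)$, handing off one element $h_{s'+1}^k$ of the current $J(s)$ at a time; and the step $s+1=m_k$ is the ``reset'' step setting $\eta\equiv 0$ on all of $J(m_k-1)\Gamma_{m_k}$. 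The crucial structural input is Proposition \ref{auxiliar0} together with \eqref{J(n)}: the sets $J(i)\Gamma_{i+1}$, $i=0,\dots,n$, partition $D_n$'s relevant coset data, and $J(n)$ is exactly the ``new'' part of $D_n$ not already handled.

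For the containment $J(n)\subseteq\Per(\eta,\Gamma_{n+1})$: take $g\in J(n)$ and write $n$ in the form $n=m_{k-1}+s'$ (or $n=m_k$). In either case, inspecting the $(n+1)$-th step of the construction, $\eta$ is defined on $J(n)\Gamma_{n+1}$ to be constant on each coset $g\Gamma_{n+1}$ — either $\eta(g\gamma)=1$ for the chosen element $g=h_{s'+1}^k$ and $0$ for the others, or uniformly $0$ in the reset step. In all cases $\eta(g\gamma)$ depends only on $g\Gamma_{n+1}$, i.e.\ $\eta(\gamma g')=\eta(g')$ is constant as $\gamma$ ranges over $\Gamma_{n+1}$ — wait, one must be careful here: $\Per$ is defined via left-multiplication by $\Gamma_{n+1}$, namely $g\in\Per(\eta,\Gamma_{n+1})$ iff $\eta(\gamma g)$ is independent of $\gamma\in\Gamma_{n+1}$. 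Since $\Gamma_{n+1}$ is normal and $g\in D_n$, for $\gamma\in\Gamma_{n+1}$ we have $\gamma g\in g\Gamma_{n+1}$, still lying in $J(n)\Gamma_{n+1}$, and the construction fixed $\eta$ to be constant on that coset; hence $g\in\Per(\eta,\Gamma_{n+1})$. I would also handle $n=0$ separately as the base case: $J(0)=\{1_G\}$ and $\eta\equiv 1$ on $\Gamma_1$, so $1_G\in\Per(\eta,\Gamma_1)$.

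For the containment $J(n)\cap\Per(\eta,\Gamma_n)=\emptyset$: here the content is that the value $\eta(\gamma g)$ is \emph{not} constant over $\gamma\in\Gamma_n$. Fix $g\in J(n)$. The point is that within one coset $g\Gamma_n$, the finer sets $g'\Gamma_{n+1}$ for $g'\in J(n)\cap g\Gamma_n$ receive mixed values in the construction: by Proposition \ref{auxiliar0}, $J(n)$ is a union of translates $\gamma J(n-1)$ for $\gamma\in(D_n\cap\Gamma_{n-1})\setminus\{1_G\}$, and more to the point, during the relevant step exactly one sub-coset (through $h^k_{s'+1}$) is assigned $1$ while the sibling sub-cosets in the same $\Gamma_k$-coset are assigned $0$; since $\Gamma_{n+1}\subsetneq\Gamma_n$, this produces two elements $\gamma_1 g,\gamma_2 g$ with $\eta(\gamma_1 g)\ne\eta(\gamma_2 g)$. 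I expect this to be the main obstacle: one must check that $J(n)$ genuinely splits a single $\Gamma_n$-coset into sub-cosets carrying different $\eta$-values, i.e.\ that the index $[\Gamma_n:\Gamma_{n+1}]\ge 2$ (true, since the $\Gamma_i$ are strictly decreasing with trivial intersection) and that the construction never accidentally assigns all of $g\Gamma_n\cap(J(n)\Gamma_{n+1})$ the same value — in the reset step $s+1=m_k$ everything on $J(m_k-1)\Gamma_{m_k}$ is $0$, but there the relevant $J$-set is $J(m_k-1)$ and we need $\Per$-failure at level $m_k-1$, not $m_k$; one traces back to the earlier step that first put a $1$ on some sub-coset of $g\Gamma_{m_k-1}$. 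Cleanly, since Proposition \ref{period-structure} already asserts $(\Gamma_n)_{n\in\mathbb N}$ is a period structure (so each $\Gamma_n$ is an \emph{essential} group of periods, in particular $\Gamma_n$ is the smallest group of periods detecting a given $g\in\Per(\eta,\Gamma_n)$), an element $g\in J(n)$ that lay in $\Per(\eta,\Gamma_n)$ would, via \eqref{T1} of Proposition \ref{Per-eq} applied in reverse, force $g\in\bigcup_{i=0}^{n-1}J(i)\Gamma_{i+1}$, contradicting $g\in J(n)=D_n\setminus\bigcup_{i=0}^{n-1}J(i)\Gamma_{i+1}$ together with normality. I would phrase the final argument in exactly this way: assume $g\in J(n)\cap\Per(\eta,\Gamma_n)$, use the structure of the construction to exhibit two $\Gamma_n$-translates of $g$ with distinct $\eta$-values, and derive a contradiction; alternatively, cite Proposition \ref{period-structure} and argue that essentiality of $\Gamma_n$ together with the identity $\Per(\eta,\Gamma_n)=\bigcup_{i=0}^{n-1}J(i)\Gamma_{i+1}$ (which one establishes simultaneously by induction, as in Proposition \ref{Per-eq}) is incompatible with $g\in J(n)$.
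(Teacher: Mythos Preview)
Your argument for $J(n)\subseteq\Per(\eta,\Gamma_{n+1})$ is correct and matches the paper. The second half has a real gap. Your main mechanism --- ``within one coset $g\Gamma_n$, the finer sets $g'\Gamma_{n+1}$ for $g'\in J(n)\cap g\Gamma_n$ receive mixed values'' --- cannot work: since $J(n)\subseteq D_n$ and $D_n$ is a fundamental domain for $G/\Gamma_n$, the set $J(n)\cap g\Gamma_n$ is the singleton $\{g\}$, so step $n+1$ of the construction (which fills in $J(n)\Gamma_{n+1}$) tells you nothing about any other point of the coset $g\Gamma_n$. The $\Gamma_k$-coset you invoke is also of no help, because $n\ge m_{k-1}>k$ forces $\Gamma_n\subsetneq\Gamma_k$, so being in the same $\Gamma_k$-coset does not give a $\Gamma_n$-translate.

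The paper's proof instead splits on the value $\eta(g)$ and looks at \emph{other} steps of the construction. If $\eta(g)=0$: write $g=g_j^n$ in the enumeration $J(n)=\{g_1^n,\dots,g_{m(n)}^n\}$; the much later step that processes $J(s)$ for $s=m_{n-1}+j-1$ selects $h_j^n\in g_j^n\Gamma_n=g\Gamma_n$ with $\eta(h_j^n)=1$, producing the needed $\Gamma_n$-translate with value $1$. (Thus your ``traces back to the earlier step'' should read ``forward to a later step''; the $1$'s in the coset $g\Gamma_n$ are planted \emph{after} step $n+1$, when the enumeration of $J(n)$ itself is being consumed.) If $\eta(g)=1$: then $g$ is the distinguished element $h_{s'}^k$ of its step, and for $\gamma\in(\Gamma_n\cap D_{n+1})\setminus\{1_G\}$ Proposition~\ref{auxiliar0} gives $\gamma g\in J(n+1)$; one then checks $\eta(\gamma g)=0$ at step $n+2$, using either that $h_{s'}^k\Gamma_k\ne h_{s'+1}^k\Gamma_k$ or that step $n+2$ is a reset. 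Finally, your alternative route through Proposition~\ref{period-structure} and Proposition~\ref{Per-eq} is circular as written: Proposition~\ref{Per-eq} makes the identity $\Per(\eta,\Gamma_n)=\bigcup_{i<n}J(i)\Gamma_{i+1}$ \emph{equivalent} to the present claim, and essentiality of $\Gamma_n$ is a statement about shifts of $\eta$, not about membership of a fixed $g$ in $\Per(\eta,\Gamma_n)$.
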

\begin{proof}
    The construction of $\eta$ implies $J(n)\subseteq \Per(\eta,\Gamma_{n+1})$. 
    
    If $g\in J(n)\cap\Per(\eta,\Gamma_n,0)$, we have that there exists ${h\in J(m_n+l)}$, for some $1\leq l\leq m(n)$, such that $h\in g\Gamma_n$ and $\eta(h)=1$, a contradiction.
    
    If $g\in J(n)\cap\Per(\eta,\Gamma_n,1)$, then $n\neq m_{k-1}-1$ for every $k\in\mathbb{N}$. If $n=m_{k-1}-2$ for some $k\in\mathbb{N}$, then $\gamma g\in J(n+1)$, and hence $\eta(\gamma g)=0$ for every $\gamma\in(\Gamma_n\cap D_{n+1})\setminus\{1_G\}$, a contradiction.
    Therefore, $n= m_{k-1}-1+s'$ for some $k\in\mathbb{N}$ and $1\leq s'< m(k)$. 
    Hence, we can deduce that $g=h_{s'}^k$.
    By the construction of $\eta$ we have that $h_{s'+1}^k\Gamma_k\neq h_{s'}^k\Gamma_k$.
    Therefore, $\gamma g\in J(n+1)$ and $\eta(\gamma g)=0$, where $\gamma\in(\Gamma_n\cap D_{n+1})\setminus\{1_G\}$, and again, we obtain a contradiction. 
    We conclude that $J(n)\cap\Per(\eta,\Gamma_n)=\emptyset$, which implies the Proposition.
\end{proof}

\begin{proposition}\label{T1T2}
    Let $\eta\in\{0,1\}^G$ be the Toeplitz array   previously defined. Let $\mathcal{M}=(n_k)_{k\in\mathbb{N}}$ be the increasing sequence of $\mathbb{N}$ defined by $n_k=m_k-1$, where $m_k$ is defined in (\ref{n-k}) for every $k\in\mathbb{N}$.
    For each $n_k,n_j\in\mathcal{M}$ and $\gamma_0\in \Gamma_{n_k}\cap D_{n_j}$ that satisfies (\ref{good-relation}), we obtain 
\begin{align*}
    \eta(\gamma_0\gamma u)=\eta(u) \mbox{ for every }\gamma\in\Gamma_{n_k}\cap D_{n_k+1}\mbox{ and }u\in J(n_k).
\end{align*}
\end{proposition}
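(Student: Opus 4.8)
The plan is to reduce the assertion to the single fact that $\eta$ vanishes identically on $J(n_k)$, and then to show that $\gamma_0\gamma u$ always lands in another block $J(l)\Gamma_{l+1}$ on which $\eta$ is likewise identically $0$. For the first point, since $n_k=m_k-1$, the set $J(n_k)\Gamma_{n_k+1}$ is exactly the set on which the construction declares $\eta\equiv 0$ (the case $s+1=m_k$, i.e. $s=n_k$); and the blocks $\{J(l)\Gamma_{l+1}\}_{l\in\N}$ are pairwise disjoint (if $i<l$ and $a\in J(l)\Gamma_{l+1}\cap J(i)\Gamma_{i+1}$, write $a=a'\gamma'$ with $a'\in J(l)$, $\gamma'\in\Gamma_{l+1}$; then $\Gamma_{l+1}\subseteq\Gamma_{i+1}$ forces $a'\in J(i)\Gamma_{i+1}$, contradicting $a'\in J(l)$), so no later step of the construction modifies $\eta$ on $J(n_k)\Gamma_{n_k+1}$. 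Hence $\eta(u)=0$ for every $u\in J(n_k)$, and it suffices to prove $\eta(\gamma_0\gamma u)=0$ for all $\gamma\in\Gamma_{n_k}\cap D_{n_k+1}$ and $u\in J(n_k)$.

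Next I would locate $\gamma_0\gamma u$. Since $\gamma\in\Gamma_{n_k}\cap D_{n_k+1}$ and $u\in J(n_k)\subseteq D_{n_k}$, Proposition \ref{decom}(4) gives $\gamma u\in\gamma D_{n_k}\subseteq D_{n_k+1}$. Applying the last assertion of Lemma \ref{good-relation1} to $\gamma_0$ — which satisfies (\ref{good-relation}) with parameters $n=n_k$ and $m=n_j$, so in particular $\gamma_0\in\Gamma_{n_k+1}$ — one gets
\[
\gamma_0\gamma u\ \in\ \gamma_0 D_{n_k+1}\ \subseteq\ D_{n_j}\setminus\bigl(D_{n_k+1}\Gamma_{n_k+2}\cup\cdots\cup D_{n_j-1}\Gamma_{n_j}\bigr).
\]
As $D_{n_j}\subseteq\bigcup_{l=0}^{n_j}J(l)\Gamma_{l+1}$ (observed just after (\ref{J(n)})) and this union is disjoint, $\gamma_0\gamma u$ lies in exactly one block $J(l)\Gamma_{l+1}$ with $0\le l\le n_j$; since $J(l)\Gamma_{l+1}\subseteq D_l\Gamma_{l+1}$, the exclusion above forces $l\le n_k$ or $l=n_j$.

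It then remains to exclude $l\le n_k-1$. For such $l$ we have $\gamma_0\in\Gamma_{n_k+1}\subseteq\Gamma_{l+1}$ and $\gamma\in\Gamma_{n_k}\subseteq\Gamma_{l+1}$ (using $l+1\le n_k$), so by normality of $\Gamma_{l+1}$, $\gamma_0\gamma u\in u\Gamma_{l+1}$; hence, if $\gamma_0\gamma u\in J(l)\Gamma_{l+1}$, then $u\Gamma_{l+1}=(\gamma_0\gamma u)\Gamma_{l+1}\subseteq J(l)\Gamma_{l+1}$, so $u\in J(l)\Gamma_{l+1}$, contradicting $u\in J(n_k)=D_{n_k}\setminus\bigcup_{i=0}^{n_k-1}J(i)\Gamma_{i+1}$ (with $l=0$ giving the impossible $u\in\Gamma_1$, as $n_k\ge 1$). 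Consequently $l\in\{n_k,n_j\}$, and both $n_k$ and $n_j$ belong to $\mathcal{M}$, so $J(l)\Gamma_{l+1}$ is again a block on which $\eta$ was defined to be identically $0$. Therefore $\eta(\gamma_0\gamma u)=0=\eta(u)$, which is the claim.

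I do not expect a single sharp obstacle here: the proof is essentially bookkeeping. The two points that need attention are the index matching in the use of Lemma \ref{good-relation1} — one must invoke the good relation with parameter $n=n_k$, which is why the pertinent hypothesis on $\gamma_0$ is $\gamma_0\in\Gamma_{n_k+1}$ — and the coset computation in the last step, where normality of the subgroups $\Gamma_{l+1}$ is exactly what lets one transfer the information "$\gamma_0\gamma u$ lies in a forbidden block" back onto $u$. Condition (\ref{Linking}) of the construction does not appear to be needed for this particular statement.
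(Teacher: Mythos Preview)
Your proof is correct and follows essentially the same approach as the paper: both reduce to showing $\eta$ vanishes at $\gamma_0\gamma u$ by locating this element in a zero block via Lemma~\ref{good-relation1} and the construction at step $s+1=m_k$. The only difference is organizational---the paper splits on whether $\gamma=1_G$ (handled via $u\in\Per(\eta,\Gamma_{n_k+1})$) or $\gamma\neq 1_G$ (using Proposition~\ref{auxiliar0} to conclude $\gamma_0\gamma u\in J(n_j)$ directly), whereas you treat all $\gamma$ uniformly through the block decomposition of $D_{n_j}$ and narrow down to $l\in\{n_k,n_j\}$.
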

\begin{proof}
  For $g\in J(n_k)$, we have that $\eta(g)=0$.  For each $n_{k},n_{j}\in\mathcal{M}$, with $n_j>n_k$, it is satisfied $n_{j}>n_{k}+2$.
  By Proposition \ref{auxiliar0}, we can conclude that  for all $\gamma_0\in \Gamma_{n_{k}+1}\cap D_{n_{j}}$ satisfying (\ref{good-relation1}), we have that $\gamma_0\gamma u\in J(n_{j})$ for all $\gamma\in (\Gamma_{n_{k}}\cap D_{n_{k}+1})\setminus\{1_G\}$ and $u\in J({n_{k}})$.
  Therefore, we have
\begin{align*}
    \eta(\gamma_0\gamma u)=0=\eta(u).
\end{align*}
If $u\in J(n_k)\subseteq \Per(\eta,\Gamma_{n_k+1})$, then $\eta(\gamma_0 u)=\eta(u)$. Therefore, 
$\eta(\gamma_0\gamma u)=\eta(u)$ for every $\gamma\in\Gamma_{n_k}\cap D_{n_k+1}$ and $u\in J(n_k)$.
\end{proof}
\begin{corollary}\label{no-empty}
    The Toeplitz array $\eta\in\{0,1\}^G$ constructed above is  irregular with $d<1-d$ and $M_G(\overline{O_\sigma(\eta)})\neq \emptyset$.
\end{corollary}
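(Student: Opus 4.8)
The plan is to assemble the three assertions from results already in hand. For the irregularity and the inequality $d<1-d$: by Proposition \ref{J-sub} the array $\eta$ satisfies $J(n)\subseteq\Per(\eta,\Gamma_{n+1})\setminus\Per(\eta,\Gamma_n)$ for every $n$, so Proposition \ref{regular-eta} applies and gives the product formula for $1-d_{n+1}$. Letting $n\to\infty$ and invoking Remark \ref{irregularity} together with the standing hypotheses (1) and (2) in Subsection \ref{Irregular-arr} — namely that $L=\frac{1}{|D_1|}+\sum_{j\ge 1}\frac{|D_j|}{|D_{j+1}|}$ converges and that $1-e^{-2L}<\frac14$ — I get that $1-d=\bigl(1-\frac{1}{|D_1|}\bigr)\prod_{j\ge1}\bigl(1-\frac{|D_j|}{|D_{j+1}|}\bigr)$ satisfies $1-d\ge e^{-2L}>\frac34$, hence $d<\frac14<\frac34<1-d$. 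In particular $d<1$, so by the equivalence in Subsection \ref{Regular-T} the array $\eta$ is irregular. (One should also record that $d$ here is literally the quantity $\lim d_i$ from Subsection \ref{Regular-T}, which is exactly what Proposition \ref{regular-eta} is computing, so no translation is needed.)

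For the nonemptiness of $M_G(\overline{O_\sigma(\eta)})$: the strategy is to apply Proposition \ref{Supp-mu} to the sequence $\mathcal{M}=(n_k)_{k\in\mathbb{N}}$ with $n_k=m_k-1$. Its hypotheses are: first, $J(n)\subseteq\Per(\eta,\Gamma_{n+1})\setminus\Per(\eta,\Gamma_n)$ for all $n$, which is Proposition \ref{J-sub}; second, for $n_s,n_j\in\mathcal{M}$ with $n_j>n_s+2$ there is $\gamma_0\in\Gamma_{n_s+1}\cap D_{n_j}$ satisfying (\ref{good-relation}) with $\eta(\gamma_0\gamma u)=\eta(u)$ for all $\gamma\in\Gamma_{n_s}\cap D_{n_s+1}$ and $u\in J(n_s)$. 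The existence of $\gamma_0$ satisfying (\ref{good-relation}) is guaranteed by Lemma \ref{good-relation1} (applied with $n\leftarrow n_s$, $m\leftarrow n_j$; here $n_j>n_s+2$ so $m\ge n+2$), and the patching identity $\eta(\gamma_0\gamma u)=\eta(u)$ for exactly such a $\gamma_0$ is the content of Proposition \ref{T1T2}. Thus Proposition \ref{Supp-mu} yields that every limit point $\mu$ of $(\mu_{n_k})_{k}$ is supported in $\overline{O_\sigma(\eta)}$. Since $M_G(\{0,1\}^G)$ is compact in the weak-$*$ topology and each $\mu_{n_k}$ is a $\sigma$-invariant probability measure on $\{0,1\}^G$, the sequence $(\mu_{n_k})_k$ has a weak-$*$ limit point $\mu$, which is $\sigma$-invariant; being supported in the closed invariant set $\overline{O_\sigma(\eta)}$, its restriction lies in $M_G(\overline{O_\sigma(\eta)})$, so this set is nonempty.

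The only genuine point requiring care — and the step I would expect a referee to scrutinize — is matching the index conventions in Proposition \ref{T1T2} and Proposition \ref{Supp-mu}: Proposition \ref{T1T2} is stated with $\gamma_0\in\Gamma_{n_k}\cap D_{n_j}$ while Lemma \ref{good-relation1} and Proposition \ref{Supp-mu} use $\Gamma_{n_s+1}\cap D_{n_j}$, and one must check $n_k$ versus $n_k+1$ and that $J(n_k)\subseteq\Per(\eta,\Gamma_{n_k+1})$ is being used in the last line of the proof of Proposition \ref{T1T2}. Granting that these conventions line up (as the proofs make clear, the relevant group is $\Gamma_{n_s+1}$, and the step $n_j>n_s+2$ is automatic because consecutive elements of $\mathcal{M}$ are spaced by at least $m(k)+1\ge 2$), everything fits together and the corollary follows. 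No new estimates or constructions are needed beyond this bookkeeping.
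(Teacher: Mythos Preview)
Your proposal is correct and follows essentially the same approach as the paper's own proof: irregularity and the bound $d<1-d$ come from Proposition~\ref{regular-eta} plus Remark~\ref{irregularity} and the standing hypothesis $1-e^{-2L}<\tfrac14$, while $M_G(\overline{O_\sigma(\eta)})\neq\emptyset$ is obtained by feeding Propositions~\ref{J-sub} and~\ref{T1T2} into Proposition~\ref{Supp-mu}. Your write-up is in fact more explicit than the paper's (which just cites the three propositions for the second part), and you are right to flag the index discrepancy in the statement of Proposition~\ref{T1T2}: the $\Gamma_{n_k}$ there is a typo for $\Gamma_{n_k+1}$, as the proof of that proposition and the hypothesis of Proposition~\ref{Supp-mu} both confirm.
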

\begin{proof}
    The irregularity of $\eta$ follows from Remark \ref{irregularity} and condition (\ref{LL}). Moreover, Proposition \ref{regular-eta} implies
    \begin{align*}
        d=1-\left(1-\dfrac{1}{|D_1|}\right)\prod_{j=1}^\infty \left(1-\dfrac{|D_j|}{|D_{j+1}|}\right)\leq 1-e^{-2L}<\dfrac{1}{4},
    \end{align*}
    and consequently, $d<1-d$.

The second part of this Corollary follows directly from Propositions \ref{J-sub}, \ref{T1T2}, and \ref{Supp-mu}.
\end{proof}

 Let $\mu\in M_G(\overline{O_\sigma(\eta)})$ be a limit point of the sequence $(\mu_{n_k})_{k\in\mathbb{N}}$, where $\mathcal{M}=(n_k)_{k\in\mathbb{N
 }}$ is defined in Proposition \ref{T1T2}. For each $i\in\{0,1\}$, $[i]=\{x\in\overline{O_\sigma(\eta)}\mid x(1_G)=i\}$ is a clopen set in $\overline{O_\sigma(\eta)}$. Therefore,
\begin{align*}
    \mu([0])&=\lim_{k\to\infty}\dfrac{|J(n_k)|+|\Per(\eta,\Gamma_{n_k},0)\cap D_{n_k}|}{|D_{n_k}|}=1-d+\lim_{k\to\infty}\dfrac{|\Per(\eta,\Gamma_{n_k},0)\cap D_{n_k}|}{|D_{n_k}|},\\
    \mu([1])&=\lim_{k\to\infty}\dfrac{|\Per(\eta,\Gamma_{n_k},1)\cap D_{n_k}|}{|D_{n_k}|}.
\end{align*}
Henceforth, we fix $\mu\in M_G(\overline{O_\sigma(\eta)})$ and we consider $\mathcal{N}=(n_{k_j})_{j\in\mathbb{N}}$ a subsequence of $\mathcal{M}$ such that $(\mu_{n_{k_j}})_{j\in\mathbb{N}}$ converges to $\mu$ when $j\to\infty$.

 \begin{lemma}[{\cite[Lemma 4.10]{CeCoGo23}}]\label{auxiliar}
For every $i\geq 1$ and $\gamma\in \Gamma_i$, there exists $l\geq i$ such that $\gamma J(i)\subseteq J(l)\Gamma_{l+1}$. 
\end{lemma}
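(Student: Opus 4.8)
The plan is to show that, for a fixed $\gamma\in\Gamma_i$, whether a left translate $\gamma g$ with $g\in J(i)$ lands in a given block $J(l)\Gamma_{l+1}$ of the partition $G=\bigsqcup_{l\geq 0}J(l)\Gamma_{l+1}$ \emph{depends only on $\gamma$, not on the choice of $g$}. First I would iterate Proposition \ref{auxiliar0}: for every $l\geq i$ one gets $J(l)=\bigcup_{v\in P(i,l-i)}vJ(i)$, where $P(i,0)=\{1_G\}$ and, for $k\geq 1$, $P(i,k)$ is the set of products $v_k\cdots v_1$ with $v_s\in(D_{i+s}\cap\Gamma_{i+s-1})\setminus\{1_G\}$; note $P(i,k)\subseteq\Gamma_i$. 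Normality of the $\Gamma_n$ together with the fact that the elements of each $D_n$ lie in distinct $\Gamma_n$-cosets makes all these unions, as well as the partition $G=\bigsqcup_{l\geq 0}J(l)\Gamma_{l+1}$, disjoint: for $i<j$ any element of $J(i)\Gamma_{i+1}\cap J(j)\Gamma_{j+1}$ would force an element of $J(j)$ to lie in $J(i)\Gamma_{i+1}$, contradicting the definition of $J(j)$ since $\Gamma_{j+1}\subseteq\Gamma_{i+1}$.

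The key observation is that, because $\Gamma_{l+1}$ is normal, $J(i)\Gamma_{l+1}=\Gamma_{l+1}J(i)$, hence $vJ(i)\Gamma_{l+1}=v\Gamma_{l+1}J(i)$ for any $v$, and so $J(l)\Gamma_{l+1}=\bigcup_{v\in P(i,l-i)}v\Gamma_{l+1}J(i)$. Moreover the map $\Gamma_i\times J(i)\to G$, $(\gamma,g)\mapsto\gamma g$, is injective: if $\gamma g=\gamma'g'$ then (using normality) $g\Gamma_i=\gamma g\Gamma_i=\gamma'g'\Gamma_i=g'\Gamma_i$, so $g=g'$ as both lie in $D_i$, whence $\gamma=\gamma'$. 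Combining these, for $g\in J(i)$ and $l\geq i$ I would establish the dichotomy
\begin{align*}
\gamma g\in J(l)\Gamma_{l+1}\quad\Longleftrightarrow\quad \gamma\in\bigcup_{v\in P(i,l-i)}v\Gamma_{l+1},
\end{align*}
a condition on $\gamma$ alone: if $\gamma=v\gamma_{l+1}$ with $v\in P(i,l-i)$ and $\gamma_{l+1}\in\Gamma_{l+1}$, then $\gamma g=vg(g^{-1}\gamma_{l+1}g)\in vJ(i)\Gamma_{l+1}\subseteq J(l)\Gamma_{l+1}$; conversely, writing $\gamma g=v\gamma_{l+1}j$ with $v\in P(i,l-i)$, $\gamma_{l+1}\in\Gamma_{l+1}$, $j\in J(i)$, and noting $v\gamma_{l+1}\in\Gamma_i$, the injectivity just proved forces $\gamma=v\gamma_{l+1}\in v\Gamma_{l+1}$.

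To finish, I would fix any $g_0\in J(i)$ (if $J(i)=\emptyset$ the statement is vacuous). Since $\bigcup_{j<i}J(j)\Gamma_{j+1}$ is invariant under left multiplication by $\Gamma_i$ (again by normality, using $\Gamma_i\subseteq\Gamma_{j+1}$ for $j<i$) and $g_0\notin\bigcup_{j<i}J(j)\Gamma_{j+1}$, we get $\gamma g_0\notin\bigcup_{j<i}J(j)\Gamma_{j+1}$; hence by the partition of $G$ there is a unique $l\geq i$ with $\gamma g_0\in J(l)\Gamma_{l+1}$. By the dichotomy this $l$ satisfies $\gamma\in\bigcup_{v\in P(i,l-i)}v\Gamma_{l+1}$, and applying the dichotomy in the other direction gives $\gamma g\in J(l)\Gamma_{l+1}$ for every $g\in J(i)$, that is, $\gamma J(i)\subseteq J(l)\Gamma_{l+1}$.

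I expect the main obstacle to be the bookkeeping underlying the dichotomy — checking that the unions of translates $vJ(i)$ and $v\Gamma_{l+1}J(i)$ are genuinely disjoint and that the decomposition $x=\gamma g$ with $\gamma\in\Gamma_i$, $g\in J(i)$ is unique, since this is precisely what makes ``the block containing $x$'' a well-defined function of $\gamma$. One should also make sure the procedure stops at a finite $l$, which is guaranteed exactly because $G=\bigsqcup_{l\geq 0}J(l)\Gamma_{l+1}$ is a genuine countable partition.
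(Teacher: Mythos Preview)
Your argument is correct. The paper does not supply its own proof of this lemma---it is quoted verbatim from \cite[Lemma~4.10]{CeCoGo23}---so there is no in-paper proof to compare against. Your route (iterating Proposition~\ref{auxiliar0} to write $J(l)=\bigcup_{v\in P(i,l-i)}vJ(i)$ with $P(i,l-i)\subseteq\Gamma_i$, then using normality of $\Gamma_{l+1}$ and the uniqueness of the decomposition $\gamma g$ with $\gamma\in\Gamma_i$, $g\in D_i$ to reduce membership of $\gamma g$ in $J(l)\Gamma_{l+1}$ to a condition on $\gamma$ alone) is the natural one, and each step---the partition $G=\bigsqcup_{l\geq 0}J(l)\Gamma_{l+1}$, the $\Gamma_i$-invariance of $\bigcup_{j<i}J(j)\Gamma_{j+1}$, and the dichotomy itself---is properly justified.
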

 \begin{proposition}\label{Partitions-C}
   Let $k\in\mathbb{Z}^+$. For each $\gamma\in\Gamma_k$, there exists at most one $g\in J(k)$ such that $\eta(\gamma g)=1$.
 \end{proposition}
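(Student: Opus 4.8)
The plan is to trace through the construction of $\eta$ in Subsection \ref{Irregular-arr} and show that for a fixed coset representative $\gamma \in \Gamma_k$, the level-$k$ block $J(k)$ contains at most one element $g$ on which $\gamma g$ carries the symbol $1$. First I would recall how the value of $\eta$ on $J(s)\Gamma_{s+1}$ (for indices $s$ with $m_{k-1} \le s < m_k$) is assigned: at step $s+1$ a single element $h_{s'+1}^k \in J(s)$ is chosen inside the coset $g_{s'+1}^k\Gamma_k$, and $\eta$ is set to $1$ on $h_{s'+1}^k\Gamma_{s+1}$ and to $0$ on $(J(s)\setminus\{h_{s'+1}^k\})\Gamma_{s+1}$; at step $m_k$ the whole block $J(m_k-1)\Gamma_{m_k}$ gets the value $0$. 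So within a single level the ``$1$'' appears on exactly one coset-translate. The subtlety the Proposition addresses is that $J(k)$ itself, as a subset of $D_k$, meets many of the larger blocks $D_s$ for $s \ge k$ through the decomposition in Proposition \ref{auxiliar0} (and its iterates), and we must control how the sparse $1$'s distributed across the steps $s \ge k$ land inside the single translate $\gamma J(k)$.

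The key step is to use Lemma \ref{auxiliar}: for the given $\gamma \in \Gamma_k$ there exists $l \ge k$ with $\gamma J(k) \subseteq J(l)\Gamma_{l+1}$. Once we know $\gamma J(k)$ sits inside a single $J(l)\Gamma_{l+1}$, the value of $\eta$ on $\gamma J(k)$ is governed entirely by step $l+1$ of the construction. By the construction there is at most one element $h \in J(l)$ with $\eta(h\Gamma_{l+1}) = 1$ (namely $h = h_{s'+1}^{k'}$ if $l = m_{k'-1}-1+s'$ with $1 \le s' < m(k')$, and none at all if $l = m_{k'}-1$). Therefore at most one coset $h\Gamma_{l+1}$ inside $J(l)\Gamma_{l+1}$ carries the symbol $1$. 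Now I would argue that the map $g \mapsto \gamma g$ sends distinct elements of $J(k)$ into distinct cosets mod $\Gamma_{l+1}$: indeed $\gamma g \equiv \gamma g' \pmod{\Gamma_{l+1}}$ forces $g \equiv g' \pmod{\Gamma_{l+1}}$ (left translation by $\gamma$ is a bijection on cosets since $\Gamma_{l+1}$ is normal, or simply $g^{-1}g' \in \Gamma_{l+1}$), and since $g,g' \in J(k) \subseteq D_k$ are distinct coset representatives mod $\Gamma_k \supseteq \Gamma_{l+1}$... wait — here one must be careful, as $D_k$ need not be a transversal for $\Gamma_{l+1}$. The correct observation is that distinct $g, g' \in J(k) \subseteq D_k$ lie in distinct cosets of $\Gamma_k$, hence a fortiori in distinct cosets of $\Gamma_{l+1}$ since $\Gamma_{l+1} \subseteq \Gamma_k$. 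Thus $g \mapsto \gamma g \bmod \Gamma_{l+1}$ is injective on $J(k)$, so at most one $g \in J(k)$ can have $\gamma g$ in the unique ``$1$''-coset of $J(l)\Gamma_{l+1}$, giving the claim.

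I expect the main obstacle to be the bookkeeping around Lemma \ref{auxiliar}: verifying that $\gamma J(k)$ really lands inside a \emph{single} $J(l)\Gamma_{l+1}$ rather than spilling across several, and then matching the index $l$ with the correct step of the construction to conclude the ``at most one $1$'' property. A second point requiring care is the edge case where $\gamma J(k)$ could in principle meet the portion of $D_l$ lying in $\Per(\eta,\Gamma_l)$ rather than in $J(l)$; but $\gamma J(k)$ consists of translates of $J(k)$-elements by an element of $\Gamma_k \subseteq \Gamma_{\text{lower}}$, and Lemma \ref{auxiliar} is precisely the statement that pins it inside $J(l)\Gamma_{l+1}$, so this is handled. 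Everything else — injectivity of left translation on cosets, normality of the $\Gamma_n$, and the single-$1$-per-step feature of the construction — is routine.
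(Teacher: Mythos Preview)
Your proposal is correct and follows exactly the paper's route: invoke Lemma~\ref{auxiliar} to place $\gamma J(k)$ inside a single $J(l)\Gamma_{l+1}$, then use that step $l+1$ of the construction puts at most one $1$ in $J(l)\Gamma_{l+1}$. The injectivity observation you spell out (distinct $g,g'\in J(k)\subseteq D_k$ lie in distinct $\Gamma_k$-cosets, hence distinct $\Gamma_{l+1}$-cosets) is left implicit in the paper's one-line proof but is indeed what makes the conclusion go through.
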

 \begin{proof}
Let $k\geq 1$ and $\gamma\in \Gamma_k$. By applying Lemma \ref{auxiliar}, there exists $l\geq k$ such that $\gamma J(k)\subseteq J(l)\Gamma_{l+1}$. Since $\eta$ was defined in $J(l)\Gamma_{l+1}$ at the step $l+1$ and there exists at most one $d\in J(l)$ such that $\eta(d\gamma')=1$ for every $\gamma'\in \Gamma_{l+1}$, we conclude.
 \end{proof}

Let us recall the partition of clopen sets of $\overline{O_\sigma(\eta)}$, given by $\{\sigma^{v^{-1}}C_n\mid v\in D_n\}$, where 
 \begin{align*}
     C_n=\{x\in \overline{O_\sigma(\eta)}: \Per(x,\Gamma_n,\alpha)=\Per(\eta,\Gamma_n,\alpha), \mbox{ for every }\alpha\in\{0,1\}\}.
 \end{align*}
 For each $n\geq 1$  and $g\in J(n)$, we define:
\begin{itemize}
    \item $C_{n,g}=\{x\in C_n: x(g)=1\}$.
    \item $ C_n^0=\{x\in C_n: x(g)=0,\mbox{ for every } g\in J(n)\}$.
    \item $C_n^1=\bigcup_{g\in J(n)}C_{n,g}.$
\end{itemize}

It follows that $C_n=C_n^0\cup C_n^1$. 
Consequently,  for every $n\geq 1$, the collection
\begin{align*}
   \mathcal{P}_n= \{\sigma^{v^{-1}}C_n^i: v\in D_n, i\in\{0,1\}\}
\end{align*}
forms a clopen partition of $\overline{O_\sigma(\eta)}$.
 \begin{proposition}\label{at-least}
     The map $\pi: M_G(\overline{O_\sigma (\eta)})\to [0,1]^2, \pi(\mu’)=(\mu’([0]),\mu’([1]))$, is constant.
 \end{proposition}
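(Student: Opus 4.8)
The plan is to show that any invariant measure $\mu'$ of $\overline{O_\sigma(\eta)}$ is forced to have $\mu'([1])=1-d$ (equivalently $\mu'([0])=d$), so that $\pi$ takes the single value $(d,1-d)$. The strategy is to compute $\mu'([1])$ through the clopen partitions $\mathcal{P}_n$ and use the structural constraints on $\eta$ — chiefly Proposition \ref{Partitions-C} (for each $\gamma\in\Gamma_k$ at most one $g\in J(k)$ has $\eta(\gamma g)=1$) together with Proposition \ref{J-sub} and Proposition \ref{Per-eq} (so that $J(n)$ exactly accounts for the newly-periodic coordinates at level $n+1$, and $\Per(\eta,\Gamma_n)=\bigcup_{i<n}J(i)\Gamma_{i+1}$). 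The key point is that the ``$1$'' symbols in $\eta$ are extremely sparse within each block $J(n)$: essentially only one coset of $\Gamma_n$ inside $J(n)\Gamma_{n+1}$ carries a $1$, and that coordinate is in fact eventually aperiodic (it belongs to the set of positions that never stabilize). This should pin down the measure of $[1]$ regardless of which invariant measure we started with.

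First I would write $[1]$ as a disjoint union over the partition $\mathcal{P}_n$: since $\sigma^{v^{-1}}C_n^i$ is supported on sequences that, at coordinate $1_G$, read off the value $\eta$ takes at $v$ translated appropriately, I can identify exactly which atoms of $\mathcal{P}_n$ lie inside $[1]$. Concretely, $x\in[1]\cap\sigma^{v^{-1}}C_n^i$ means $x(1_G)=1$, and by the definition of $C_n$ and $C_n^i$ this value is determined: if $v\in D_n\cap\Per(\eta,\Gamma_n)$ then it is $\eta(v)\in\{0,1\}$ and the $i$-label is irrelevant, while if $v\in J(n)$ then the value at $1_G$ is $1$ precisely on $\sigma^{v^{-1}}C_{n,v}\subseteq\sigma^{v^{-1}}C_n^1$. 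By $G$-invariance, $\mu'(\sigma^{v^{-1}}C_n^i)=\mu'(C_n^i)$ for every $v$, so the sum collapses to a count: $\mu'([1])$ equals $|D_n\cap\Per(\eta,\Gamma_n,1)|\cdot\mu'(C_n)$ plus a term of the form $|J(n)|\cdot\mu'(C_{n,g})$-type contributions, once I check (using Proposition \ref{Partitions-C} transported to the level of $C_n$, i.e. that the sets $C_{n,g}$, $g\in J(n)$, behave compatibly) that these pieces add up cleanly. Since $\{\sigma^{v^{-1}}C_n : v\in D_n\}$ partitions the space, $\mu'(C_n)=1/|D_n|$, and so the ``periodic part'' contributes $|D_n\cap\Per(\eta,\Gamma_n,1)|/|D_n|\to \lim_k |\Per(\eta,\Gamma_{n_k},1)\cap D_{n_k}|/|D_{n_k}|$, which by the displayed formula just before Lemma \ref{auxiliar} is exactly what $\mu([1])$ is for the distinguished limit measure $\mu$. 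The remaining task is to show the ``aperiodic part'' — the $\mu'$-mass of $\bigcup_{v\in J(n)}\sigma^{v^{-1}}C_n^1$ — tends to $0$ as $n\to\infty$.

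For that last point I would argue that $\sum_{v\in J(n)}\mu'(\sigma^{v^{-1}}C_n^1)=|J(n)|\,\mu'(C_n^1)$, and bound $\mu'(C_n^1)$. Here Proposition \ref{Partitions-C} is decisive: for $x\in C_n$ the positions $g\in J(n)$ with $x(g)=1$ are governed, after translating by the relevant $\gamma\in\Gamma_n$ identifying $x$ with a piece of $\eta$, by the fact that $\eta$ puts at most one $1$ per $\Gamma_n$-coset in $J(n)$ — and more importantly the total number of $1$'s appearing among $J(n)$-positions across all of $\overline{O_\sigma(\eta)}$ is $O(1)$ relative to the $|D_n|$ atoms, so $|J(n)|\,\mu'(C_n^1)\to 0$. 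An alternative, perhaps cleaner, route: show directly that $\mu'(\mathcal{T})\ge d$ is impossible to exceed for the positions counted by $J(n)$, or invoke that $C_n^1$ has $\mu'$-measure at most $(\text{number of }1\text{'s in }\eta\text{ on }J(n))/|D_n|$, which is $o(1/|D_n|)\cdot|D_n|$... — the arithmetic is exactly the content of the $d_n$-recursion in Proposition \ref{regular-eta}. The main obstacle I anticipate is the bookkeeping in the second paragraph: correctly matching atoms of $\mathcal{P}_n$ with their contribution to $[1]$ and verifying that the $C_{n,g}$ are pairwise disjoint with controllable total measure (this is where one must use that $\eta$ has at most one $1$ per $J(n)$-coset, i.e. Proposition \ref{Partitions-C}, rather than naively summing). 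Once that is in place, letting $n=n_{k_j}\to\infty$ along $\mathcal{N}$ and comparing with the explicit formula for $\mu([1])$ forces $\mu'([1])=\mu([1])=1-d$ and $\mu'([0])=d$, so $\pi\equiv(d,1-d)$ is constant.
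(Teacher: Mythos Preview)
Your approach is essentially the paper's: decompose $[1]$ through the partition $\mathcal{P}_n$, identify a ``periodic'' part whose measure is $a_{n,1}/|D_n|$ (with $a_{n,1}=|D_n\cap\Per(\eta,\Gamma_n,1)|$) independently of $\mu'$, and show the residual vanishes. Your second paragraph carries this out correctly: for $v\in J(n)$ the intersection of $\sigma^{v^{-1}}C_n$ with $[1]$ is exactly $\sigma^{v^{-1}}C_{n,v}$, and Proposition~\ref{Partitions-C} (transported to $C_n$ via a limiting argument) gives disjointness of the $C_{n,g}$, so $\sum_{g\in J(n)}\mu'(C_{n,g})=\mu'(C_n^1)$.

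There are, however, two concrete slips. First, in your third paragraph you misidentify the residual set as $\bigcup_{v\in J(n)}\sigma^{v^{-1}}C_n^1$ and set out to show $|J(n)|\,\mu'(C_n^1)\to 0$. This is the wrong target: the residual contributing to $[1]$ is $\bigcup_{v\in J(n)}\sigma^{v^{-1}}C_{n,v}$, whose measure is $\mu'(C_n^1)$, not $|J(n)|\,\mu'(C_n^1)$. The distinction matters because $|J(n)|\,\mu'(C_n^1)$ need not tend to $0$ (one only has $|J(n)|\,\mu'(C_n^1)\le |J(n)|/|D_n|\to 1-d>0$), whereas $\mu'(C_n^1)\le\mu'(C_n)=1/|D_n|\to 0$ is immediate --- no appeal to Proposition~\ref{Partitions-C} or to the sparsity of $1$'s is needed at this step. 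Second, your claimed constant value $(\mu'([0]),\mu'([1]))=(d,1-d)$ is incorrect. The limit gives
\[
\mu'([1])=\lim_{n\to\infty}\frac{a_{n,1}}{|D_n|},\qquad \mu'([0])=(1-d)+\lim_{n\to\infty}\frac{a_{n,0}}{|D_n|},
\]
and since $a_{n,1}/|D_n|\le d_n\to d<1/4$ one has $\mu'([1])\le d$, not $1-d$. This does not affect the proposition (which only asserts constancy), but it is worth correcting.
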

 \begin{proof}
     For each $n\in\mathbb{N}$ and $i\in\{0,1\}$, let $a_{n,i} =|D_n\cap Per(\eta, \Gamma_n, i)|$.
 We define $C_0^i=[i]\cap \overline{O_\sigma (\eta)}$. Note that
 \begin{align*}
     C_0^0=&\bigcup_{g\in \Per(\eta,\Gamma_n,0)\cap D_n}\sigma^{g^{-1}}C_n\cup \bigcup_{g\in J(n)} \sigma^{g^{-1}}\big(C_n^0\cup \bigcup_{h\in J(n)\setminus\{g\} } C_{n,h}\big).\\
     C_0^1=&\bigcup_{g\in \Per(\eta,\Gamma_n,1)\cap D_n}\sigma^{g^{-1}}C_n\cup \bigcup_{g\in J(n)} \sigma^{g^{-1}}  C_{n,g}.
 \end{align*}
 Therefore, for every $\mu'\in M_G(\overline{O_\sigma (\eta)})$, we have
 \begin{align*}
     \mu’(C_0^0)=&\dfrac{a_{n,0}}{|D_n|}+|J(n)|\mu’(C_{n}^0)+|J(n)|\mu'(C_{n}^1)-\sum_{g\in J(n)}\mu’(C_{n,g}) \\
     =&\dfrac{a_{n,0}}{|D_n|}+\dfrac{|J(n)|}{|D_n|}-\sum_{g\in J(n)}\mu’(C_{n,g})\\
     =&\dfrac{a_{n,0}}{|D_n|}+\dfrac{|J(n)|}{|D_n|}-\mu’(C_n^1)\\
     \mu’(C_0^1)=&\dfrac{a_{n,1}}{|D_n|}+\mu’(C_n^1).
 \end{align*}
 Since $\lim\limits_{n\to\infty}\mu’(C_n^1)=0$, we can conclude
 \begin{align*}
     \mu’(C_0^0)=&\lim\limits_{n\to\infty}\dfrac{a_{n,0}}{|D_n|}+\dfrac{|J(n)|}{|D_n|}=1-d+\lim_{n\to\infty} \dfrac{a_{n,0}}{|D_n|},\\
     \mu’(C_0^1)=&\lim\limits_{n\to\infty}\dfrac{a_{n,1}}{|D_n|}.
 \end{align*}
This completes the proof.
 \end{proof}
 \begin{corollary}\label{equals-partitions}
 Let $\mu'$ be an invariant probability measure of $\overline{O_\sigma(\eta)}$. For every $n\in\mathbb{N}$ and $i\in\{0,1\}$, we have that $\mu'(C_n^i)=\mu(C_n^i)$. 
 \end{corollary}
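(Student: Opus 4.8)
The plan is to deduce \Cref{equals-partitions} from \Cref{at-least} together with the explicit formulas established in the proof of \Cref{at-least}. The key observation is that in that proof we obtained, for \emph{every} invariant probability measure $\mu'$ and \emph{every} $n\in\mathbb{N}$, the identities
\begin{align*}
    \mu'(C_0^0)&=\dfrac{a_{n,0}}{|D_n|}+\dfrac{|J(n)|}{|D_n|}-\mu'(C_n^1),\\
    \mu'(C_0^1)&=\dfrac{a_{n,1}}{|D_n|}+\mu'(C_n^1),
\end{align*}
where $C_0^i=[i]\cap\overline{O_\sigma(\eta)}$ and the quantities $a_{n,i}=|D_n\cap\Per(\eta,\Gamma_n,i)|$ and $|J(n)|$ depend only on $\eta$ and $n$, not on $\mu'$. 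First I would solve the second identity for $\mu'(C_n^1)$, obtaining
\[
\mu'(C_n^1)=\mu'(C_0^1)-\dfrac{a_{n,1}}{|D_n|}=\mu'([1])-\dfrac{a_{n,1}}{|D_n|}.
\]
By \Cref{at-least} the value $\mu'([1])$ is the same for all $\mu'\in M_G(\overline{O_\sigma(\eta)})$; in particular it equals $\mu([1])$. Hence $\mu'(C_n^1)=\mu([1])-a_{n,1}/|D_n|=\mu(C_n^1)$, which settles the case $i=1$.

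For the case $i=0$, I would use that $\mathcal{P}_n=\{\sigma^{v^{-1}}C_n^i:v\in D_n,\ i\in\{0,1\}\}$ is a clopen partition of $\overline{O_\sigma(\eta)}$ (established in the discussion preceding \Cref{at-least}) together with the fact that $\{\sigma^{v^{-1}}C_n:v\in D_n\}$ is a clopen partition coming from \cite[Lemma 2.5]{CeCoGo23}, so that $C_n=C_n^0\sqcup C_n^1$ and $\mu'(\sigma^{v^{-1}}C_n^i)=\mu'(C_n^i)$ by invariance. Summing over $v\in D_n$ gives $|D_n|\,(\mu'(C_n^0)+\mu'(C_n^1))=1$, i.e.
\[
\mu'(C_n^0)=\dfrac{1}{|D_n|}-\mu'(C_n^1).
\]
Since the right-hand side has already been shown to be independent of $\mu'$ (the term $\mu'(C_n^1)$ is, and $1/|D_n|$ is a constant), we conclude $\mu'(C_n^0)=\mu(C_n^0)$ as well. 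This completes the argument.

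I do not expect a serious obstacle here: the proposition is essentially a bookkeeping corollary that repackages the computations internal to the proof of \Cref{at-least}. The only point requiring a little care is making sure the formulas for $\mu'(C_0^0)$ and $\mu'(C_0^1)$ in that proof are valid for each fixed finite $n$ (before passing to the limit), which they are — the limit was taken only at the very end to identify the common value with $1-d+\lim a_{n,0}/|D_n|$. An alternative, even shorter route would be to argue directly that $\mu'(C_n^1)$ is determined by $\mu'([1])$ and then invoke \Cref{at-least}; the partition-sum identity then disposes of $C_n^0$ for free. Either way the proof is two lines once the right identity is isolated.
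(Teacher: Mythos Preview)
Your proposal is correct and follows essentially the same approach as the paper: both extract, from the computations in the proof of \Cref{at-least}, linear relations expressing $\mu'(C_0^0),\mu'(C_0^1)$ in terms of $\mu'(C_n^0),\mu'(C_n^1)$ and then use the constancy of the left-hand sides to conclude. The only cosmetic difference is that the paper packages the two relations as an invertible $2\times 2$ system (with determinant $|D_n|$), whereas you solve the single identity $\mu'(C_0^1)=a_{n,1}/|D_n|+\mu'(C_n^1)$ directly and then recover $\mu'(C_n^0)$ from the partition constraint $\mu'(C_n^0)+\mu'(C_n^1)=1/|D_n|$; your route is marginally more economical but not genuinely different.
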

 \begin{proof}
     From the proof of Proposition 
     \ref{at-least}, we can also deduce the following expressions
     \begin{align*}
         \mu’(C_0^0)=&a_{n,0}(\mu’(C_n^0)+\mu’(C_n^1))+|J(n)|\mu’(C_n^0)+(|J(n)|-1)\mu’(C_n^1)\\
         =&(a_{n,0}+|J(n)|)\mu’(C_n^0)+(a_{n,0}+|J(n)|-1)\mu’(C_n^1),
     \end{align*}
     and
     \begin{align*}
         \mu’(C_0^1)=&a_{n,1}(\mu’(C_n^0)+\mu’(C_n^1))+\mu’(C_n^1)\\
         =&a_{n,1}\mu’(C_n^0)+(a_{n,1}+1)\mu’(C_n^1).
     \end{align*}
     Let
     \begin{align*}
         A_n=\left(\begin{array}{cc}
             a_{n,0}+|J(n)| &a_{n,0}+|J(n)|-1  \\
              a_{n,1}& a_{n,1}+1 
         \end{array}\right).
     \end{align*}
     Note that $A_n$ is an invertible matrix since $\det(A_n)=|D_n|$. Hence, we can deduce that $\mu'(C_n^i)=\mu(C_n^i)$ for every $i\in\{0,1\}$, since the map $\pi$ defined in Proposition \ref{at-least} is constant.    
 \end{proof}

 \section{Topo-isomorphism of $\overline{O_\sigma(\eta)}$}\label{sec:Topo}
 Recall the sequence of natural numbers $\mathcal{M}=(n_k)_{k\in\mathbb{N}}$ defined in  Proposition \ref{T1T2}, $\mu$ the fixed invariant probability measure of $\overline{O_\sigma(\eta)}$, and the subsequence $\mathcal{N}=(n_{k_j})_{j\in\mathbb{N}}$ of $\mathcal{M}$ such that $(\mu_{n_{k_j}})_{j\in\mathbb{N}}$ converges to $\mu$, when $j\to\infty$.
 For every $n\geq 1$, the set $U_n$ was defined in (\ref{definition}) as $U_n=\{x\in\{0,1\}^G: x(D_{n+1})=\eta_n(D_{n+1})\}$.
 
The following Lemma is similar to Lemma 5.1 in \cite{CeCoGo23}.
\begin{lemma}\label{U'ns}
    Let $n_{k_j}\in\mathcal{N}$, with $j\in\mathbb{N}$. Then,
    \begin{align*}
        \mu(U_{n_{k_j}})\geq \lim_{j\to\infty}\dfrac{1}{D_{n_{k_j}+1}} \prod_{l=1}^{j}\left(1-\dfrac{|D_{n_{k_j}+l}|}{|D_{n_{k_j}+1+l}|}\right),
    \end{align*}
    and
    \begin{align*}
        \mu\left(\bigcup_{v\in D_{n_{k_j}+1}}\sigma^{v^{-1}} U_{n_{k_j}}\right)\geq \lim_{j\to\infty}\prod_{l=1}^{j}\left(1-\dfrac{|D_{n_{k_j}+l}|}{|D_{n_{k_j}+1+l}|}\right).
    \end{align*}
\end{lemma}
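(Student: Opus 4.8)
The statement is a lower bound for $\mu(U_{n_{k_j}})$ and for the measure of the ``full tower'' over $U_{n_{k_j}}$. The natural route is to express $U_{n_{k_j}}$ in terms of the cylinder sets $C_n^i$ introduced before Proposition \ref{at-least}, to which Corollary \ref{equals-partitions} applies, so that $\mu$-measures of those sets can be evaluated along the approximating sequence $\mu_{n_{k_j}}$ and then estimated by Lemma \ref{good-relation1}. I would proceed as follows.

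\textbf{Step 1: locate a Toeplitz orbit point inside $U_{n_{k_j}}$.} By Proposition \ref{T1T2} the hypotheses of Lemma \ref{good-patches} are met for the sequence $\mathcal{M}=(n_k)$, so for every pair $n_{k_j}<n_{k_m}$ in $\mathcal{N}$ with $n_{k_m}>n_{k_j}+2$ there is $\gamma_0\in\Gamma_{n_{k_j}+1}\cap D_{n_{k_m}}$ satisfying \eqref{good-relation} with $\sigma^{\gamma_0^{-1}}\eta\in U_{n_{k_j}}$. The quantitative part of Lemma \ref{good-relation1} says that the number of such $\gamma_0$ in $D_{n_{k_m}}$ is at least $\frac{|D_{n_{k_m}}|}{|D_{n_{k_j}+1}|}\prod_{l=1}^{m-j-1}\bigl(1-\frac{|D_{n_{k_j}+l}|}{|D_{n_{k_j}+1+l}|}\bigr)$ (here one uses that along $\mathcal{N}$ the relevant indices are $\geq n_{k_j}+l$, so the factors are bounded below by the displayed product), and moreover $\gamma_0 D_{n_{k_j}+1}$ is contained in a ``fresh'' part of $D_{n_{k_m}}$, so distinct $\gamma_0$'s give disjoint translated blocks.

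\textbf{Step 2: count the mass of $U_{n_{k_j}}$ under $\mu_{n_{k_m}}$.} The measure $\mu_{n_{k_m}}=\frac{1}{|D_{n_{k_m}}|}\sum_{d\in D_{n_{k_m}}}\delta_{\sigma^{d^{-1}}\eta_{n_{k_m}}}$ assigns to $U_{n_{k_j}}$ the fraction of $d\in D_{n_{k_m}}$ with $\sigma^{d^{-1}}\eta_{n_{k_m}}\in U_{n_{k_j}}$, i.e. $\eta_{n_{k_m}}(dw)=\eta_{n_{k_j}}(w)$ for $w\in D_{n_{k_j}+1}$; by the block structure of $\eta_{n_{k_m}}$ and Step 1 this fraction is at least $\frac{1}{|D_{n_{k_j}+1}|}\prod_{l}\bigl(1-\frac{|D_{n_{k_j}+l}|}{|D_{n_{k_j}+1+l}|}\bigr)$. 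Since $U_{n_{k_j}}$ is clopen, $\mu(U_{n_{k_j}})=\lim_{m\to\infty}\mu_{n_{k_m}}(U_{n_{k_j}})$, and passing to the limit yields the first inequality. For the second inequality I would note $\bigcup_{v\in D_{n_{k_j}+1}}\sigma^{v^{-1}}U_{n_{k_j}}$ is a disjoint union (the $\sigma^{v^{-1}}U_{n_{k_j}}$ for distinct $v\Gamma_{n_{k_j}+1}$ are pairwise disjoint, being contained in distinct atoms $\sigma^{v^{-1}}C_{n_{k_j}+1}$ of the clopen partition), and by $\sigma$-invariance of $\mu$ each piece has the same mass; hence the union has mass $|D_{n_{k_j}+1}|\,\mu(U_{n_{k_j}})$, which multiplies out the $\frac{1}{|D_{n_{k_j}+1}|}$ factor and gives the stated bound.

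\textbf{Main obstacle.} The delicate point is the bookkeeping in Steps 1--2: one must check that distinct valid $\gamma_0$ (and distinct $d\in D_{n_{k_m}}$) really do produce \emph{disjoint} occurrences of the pattern $\eta_{n_{k_j}}$ on $D_{n_{k_j}+1}$-translates inside $D_{n_{k_m}}$, so that the count in Lemma \ref{good-relation1} translates into an honest lower bound on $\mu_{n_{k_m}}(U_{n_{k_j}})$ rather than an overcount. This is exactly where the ``furthermore'' clause of Lemma \ref{good-relation1} (that $\gamma D_{n+1}$ lands in $D_m\setminus(D_{n+1}\Gamma_{n+2}\cup\cdots\cup D_{m-1}\Gamma_m)$) and Proposition \ref{decom}(4) are used; making the indexing along the subsequence $\mathcal{N}$ consistent with the product $\prod_{l=1}^{j}$ appearing in the statement also requires care, but is otherwise routine.
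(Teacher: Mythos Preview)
Your approach to the first inequality is essentially the paper's: count the admissible $\gamma_0$'s via Lemma~\ref{good-relation1}, observe (using Proposition~\ref{T1T2} and Lemma~\ref{good-patches}) that each such $\gamma_0$ gives $\sigma^{\gamma_0^{-1}}\eta_{m}\in U_{n}$ (here you need the ``furthermore'' clause $\gamma_0 D_{n+1}\subseteq D_m$ so that $\eta_m$ agrees with $\eta$ on $\gamma_0 D_{n+1}$, which you correctly flag), hence $\mu_m(U_n)\geq N_{m,n}/|D_m|$, and then pass to the limit using that $U_n$ is clopen.

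For the second inequality, however, your shortcut has a gap. You claim that the sets $\sigma^{v^{-1}}U_{n_{k_j}}$, $v\in D_{n_{k_j}+1}$, are pairwise disjoint because $U_{n_{k_j}}\cap\overline{O_\sigma(\eta)}\subseteq C_{n_{k_j}+1}$. This containment is not established anywhere in the paper (and Lemma~\ref{Containing U_n}, which is in any case proved \emph{after} the present lemma, only yields $U_{n_{k_j}}\cap\overline{O_\sigma(\eta)}\subseteq Y_{n_{k_j}}\subseteq C_{n_{k_j}}$, one level too coarse for the partition indexed by $D_{n_{k_j}+1}$). Without disjointness you only get the wrong-way inequality $\mu(\bigcup_v\sigma^{v^{-1}}U_n)\le |D_{n+1}|\,\mu(U_n)$.

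The paper sidesteps this entirely: instead of arguing disjointness at the level of the sets $\sigma^{v^{-1}}U_n$, it counts witnesses directly in the orbit of $\eta_m$. For each admissible $\gamma_0$ and each $v\in D_{n+1}$, one has $\sigma^{(\gamma_0 v)^{-1}}\eta_m\in\sigma^{v^{-1}}U_n$, and the elements $\gamma_0 v$ are pairwise distinct in $D_m$ (because $\gamma_0\in\Gamma_{n+1}$, $v\in D_{n+1}$, and $D_{n+1}$ is a fundamental domain; combined with $\gamma_0 D_{n+1}\subseteq D_m$). This immediately gives
\[
\mu_m\Bigl(\bigcup_{v\in D_{n+1}}\sigma^{v^{-1}}U_n\Bigr)\;\ge\;\frac{N_{m,n}\,|D_{n+1}|}{|D_m|},
\]
and taking the limit finishes the argument. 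Your ``main obstacle'' paragraph already isolates exactly the ingredient needed for this count; you simply have to apply it to the union rather than invoking an unproved disjointness.
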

\begin{proof}
Denote $n=n_{k_{j'}}$ and $m=n_{k_{j}}$ in $\mathcal{N}$ such that  $m>n$. According to Lemma  \ref{good-relation1},
 \begin{align*}
     N_{m,n}\geq \dfrac{|D_{m}|}{|D_{n+1}|}\prod_{l=1}^{m-n-1}\left(1-\dfrac{|D_{n+l}|}{|D_{n+l+1}|}\right).
 \end{align*}
From Lemma \ref{good-patches} and Proposition \ref{T1T2}, we deduce that for every $\gamma\in \Gamma_{n+1}\cap D_m$ satisfying (\ref{good-relation}),
 \begin{align*}
     \eta_n(D_{n+1})=\eta(\gamma D_{n+1})=\eta_{m}(\gamma D_{n+1}).
 \end{align*} 
 Consequently, it follows  that $\sigma^{\gamma^{-1}}\eta_{m}\in U_n$ and $\sigma^{(\gamma v)^{-1}}\eta_{m}\in\sigma^{v^{-1}} U_n$ for all $v\in D_{n+1}$. Therefore, we have
 \begin{align*}
     \mu_{m}(U_n)\geq N_{m,n}\dfrac{1}{|D_{m}|}\geq \dfrac{1}{|D_{n+1}|}\prod_{l=1}^{m-n-1}\left(1-\dfrac{|D_{n+l}|}{|D_{n+l+1}|}\right),
 \end{align*}
 and
 \begin{align*}
     \mu_{m}\left(\bigcup_{v\in D_{n+1}}\sigma^{v^{-1}}U_n\right)\geq N_{m,n}\dfrac{|D_{n+1}|}{|D_{m}|}\geq \prod_{l=1}^{m-n-1}\left(1-\dfrac{|D_{n+l}|}{|D_{n+l+1}|}\right).
 \end{align*}
We observe that $\lim\limits_{j\to\infty}\prod_{l=1}^{j}(1-\frac{|D_{n+l}|}{|D_{n+l+1}|})=\lim\limits_{j\to\infty}\prod_{l=1}^{n_{k_{j}}-n-1}(1-\frac{|D_{n+l}|}{|D_{n+l+1}|})$. Since $\mu$ is the limit of $(\mu_{n_{k_j}})_{j\in\mathbb{N}}$ and $U_n$ is clopen, we conclude the Lemma.
\end{proof}

   Let $k\in\mathbb{N}$. Suppose that $n_{k-1}+1<s< n_{k}+1$, with $n_{k-1},n_k$ two consecutive elements in $\mathcal{M}$.  We can deduce from the definition of $\eta$ that
    \begin{align}\label{Periodo-1}
        \Per(\eta,\Gamma_s,1)= \Gamma_1\cup\bigcup_{i=1}^{k-1}\bigcup_{t=1}^{m(i)} h_t^{i}\Gamma_{n_{i-1}+1+t}\cup\bigcup_{t=1}^{s-(n_{k-1}+1)} h_t^{k}\Gamma_{n_{k-1}+1+t}.
    \end{align}
    We assume that
    $m(0)=1$, and $h_1^0=1_G$. Moreover, since $n_{k-1}+1+m(k)=n_k$ and $\eta(g\gamma)=0$ for every $g\in D_{n_k} $ and $\gamma\in \Gamma_{n_k+1}$, we conclude $$\Per(\eta,\Gamma_{n_{k-1}+1+m(k)},1)=\Per(\eta,\Gamma_{n_k},1)=\Per(\eta,\Gamma_{n_{k}+1},1).$$

\begin{lemma}\label{Good-D's}
    Let $n_{k}\in\mathcal{M}$ be such that $n_{k}\geq 2$. For every $w\in D_{n_{k}-1}\setminus\{1_G\}$, there exists $g_w\in \Per(\eta,\Gamma_{n_{k}-1},1)$ such that  $wg_w\notin \Per(\eta,\Gamma_{n_{k}+1},1)$ and $wg_w\in D_{n_k+1}$. 
\end{lemma}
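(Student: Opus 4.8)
The plan is to construct $g_w$ explicitly using the structure of $\Per(\eta,\Gamma_{n_k-1},1)$ described in (\ref{Periodo-1}) together with the linking condition (\ref{Linking}). Fix $n_k\in\mathcal{M}$ with $n_k\ge 2$, say $n_k=m_k-1$, and fix $w\in D_{n_k-1}\setminus\{1_G\}$. By the construction of $\eta$, the final "fresh" symbol $1$ placed before level $n_k$ is located at $h_{m(k)}^k\in D_{n_k-1}$ (this is the point with $h_{m(k)}^k\in J(n_k-2)$ and $\eta(h_{m(k)}^k\gamma)=1$ for all $\gamma\in\Gamma_{n_k-1}$), so $h_{m(k)}^k\in\Per(\eta,\Gamma_{n_k-1},1)$. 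The natural candidate is therefore $g_w:=h_{m(k)}^k$. First I would verify $g_w\in\Per(\eta,\Gamma_{n_k-1},1)$ directly from (\ref{Periodo-1}) (it is one of the listed cosets $h_t^k\Gamma_{n_{k-1}+1+t}$ with $t=m(k)$, noting $n_{k-1}+1+m(k)=n_k$ so this coset's stabilizer group is $\Gamma_{n_k}\supseteq\Gamma_{n_k-1}$, hence it lies in $\Per(\eta,\Gamma_{n_k-1},1)$).

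Next I would show $wg_w\in D_{n_k+1}$. Since $w\in D_{n_k-1}\setminus\{1_G\}$ and $g_w=h_{m(k)}^k$, we want $w h_{m(k)}^k\in D_{n_k+1}$. This is precisely what condition (\ref{Linking}) is designed to give: rewriting, $v^{-1}h_{m(k)}^k\in D_{m_k}=D_{n_k+1}$ for every $v\in(\Gamma_{m_k-2}\cap D_{m_k-1})\setminus\{1_G\}=(\Gamma_{n_k-1}\cap D_{n_k})\setminus\{1_G\}$. Here I need to reconcile "$w\in D_{n_k-1}$" with "$v\in\Gamma_{n_k-1}\cap D_{n_k}$": using Proposition \ref{decom}(4), $D_{n_k}$ decomposes along $\Gamma_{n_k-1}$-cosets, and one multiplies $h_{m(k)}^k$ on the left by a suitable $\Gamma_{n_k-1}$-translate to land back in $D_{n_k+1}$; a careful bookkeeping of which fundamental-domain representative $w$ corresponds to shows the hypothesis applies. (If the statement intends left multiplication by $D_{n_k-1}$ rather than $\Gamma_{n_k-1}\cap D_{n_k}$, one first absorbs $w$ into the appropriate coset representative.) I would lay out this index-matching step carefully, as it is essentially unwinding the definitions.

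Finally I would show $wg_w\notin\Per(\eta,\Gamma_{n_k+1},1)$. The key point: $g_w=h_{m(k)}^k$ is, by construction, a point whose $\Gamma_{n_k}$-orbit carries the value $1$, but $w g_w$ with $w\neq 1_G$ gets displaced into $J(n_k)\Gamma_{n_k+1}$ at level $n_k+1$ — more precisely, applying Proposition \ref{auxiliar0}, a nontrivial $\Gamma_{n_k-1}\cap D_{n_k}$-translate of an element of $J(n_k-1)$ lands in $J(n_k)$, and by the construction at step $n_k+1$ (the case $s+1=m_k$) we have $\eta(g\gamma)=0$ for every $g\in J(n_k)$ and $\gamma\in\Gamma_{n_k+1}$; hence $wg_w\in\Per(\eta,\Gamma_{n_k+1},0)$, so it is not in $\Per(\eta,\Gamma_{n_k+1},1)$. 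I would need to check that $w\neq 1_G$ indeed forces the displacement to be nontrivial (this uses that $\Gamma_{n_k-1}\cap D_{n_k}$ meets each coset once and $w$ is not the identity representative), which is where the hypothesis $w\in D_{n_k-1}\setminus\{1_G\}$ is used.

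The main obstacle I anticipate is the second step — the precise matching between the index $w\in D_{n_k-1}\setminus\{1_G\}$ and the form of condition (\ref{Linking}), i.e.\ correctly tracking how $w$ sits relative to the $\Gamma_{n_k-1}$-coset decomposition of $D_{n_k}$ and ensuring that the product $wg_w$ genuinely lands in $D_{n_k+1}$ rather than merely in some $\Gamma_{n_k+1}$-translate of it. The other two steps are essentially direct consequences of (\ref{Periodo-1}), Proposition \ref{auxiliar0}, and the step-$m_k$ clause of the construction.
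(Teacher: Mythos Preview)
Your explicit candidate $g_w=h_{m(k)}^k$ does not work, and the error is already in the first step. You claim that $h_{m(k)}^k\in\Per(\eta,\Gamma_{n_k-1},1)$ because ``$\Gamma_{n_k}\supseteq\Gamma_{n_k-1}$''; this inclusion is reversed. The sequence $(\Gamma_n)$ is \emph{decreasing}, so $\Gamma_{n_k}\subseteq\Gamma_{n_k-1}$, and hence $\Per(\eta,\Gamma_{n_k-1},1)\subseteq\Per(\eta,\Gamma_{n_k},1)$, not the other way around. In fact $h_{m(k)}^k\in J(n_k-1)$ (not $J(n_k-2)$ as you wrote), and by Proposition~\ref{J-sub} one has $J(n_k-1)\cap\Per(\eta,\Gamma_{n_k-1})=\emptyset$, so $h_{m(k)}^k\notin\Per(\eta,\Gamma_{n_k-1},1)$. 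Equivalently, in formula~(\ref{Periodo-1}) with $s=n_k-1$ the last union runs only over $t\le m(k)-1$; the term $t=m(k)$ does not appear.

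The index mismatch you flagged in the second step is not a technicality but a genuine obstruction: the elements $w\in D_{n_k-1}\setminus\{1_G\}$ and the elements $v\in(\Gamma_{n_k-1}\cap D_{n_k})\setminus\{1_G\}$ appearing in~(\ref{Linking}) are \emph{disjoint} (since $D_{n_k-1}\cap\Gamma_{n_k-1}=\{1_G\}$), so~(\ref{Linking}) says nothing about the products $wh_{m(k)}^k$. The same issue blocks your third step, which relies on Proposition~\ref{auxiliar0} applied to a $\Gamma_{n_k-1}$-translate. More fundamentally, no single element of $\Per(\eta,\Gamma_{n_k-1},1)$ can serve as $g_w$ for all $w$: for instance $g_w=1_G$ fails whenever $w$ itself equals some $h_t^i$ with small $i$. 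The paper's argument is non-constructive: assuming $w\Per(\eta,\Gamma_{n_k-1},1)\subseteq\Per(\eta,\Gamma_{n_k+1},1)$, it tests $w$ successively against $1_G,h_1^1,h_2^1,\ldots,h_{m(k)-1}^k$ (using~(\ref{Periodo-1}) and Proposition~\ref{J-sub}) to force $w\in\Gamma_1,\Gamma_2,\ldots,\Gamma_{n_k-1}$, whence $w=1_G$. Only after this contradiction yields \emph{some} $g_w'$ is a $\Gamma_{n_k+1}$-adjustment made to land in $D_{n_k+1}$.
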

\begin{proof} First, we prove that for every $w\in D_{n_k-1}\setminus \{1_G\}$ there exists $g_w'\in \Per(\eta,\Gamma_{n_k-1},1)$ such that $wg_w'\notin \Per(\eta,\Gamma_{n_k+1},1)$.
   We proceed by contradiction. Let $w\in D_{n_k-1}\setminus \{1_G\} $ and suppose that for every $g\in \Per(\eta,\Gamma_{n_{k}-1},1)$ it is satisfied that $wg\in \Per(\eta,\Gamma_{n_{k}+1},1)$. 
   We claim that $w\in \Gamma_{n_k-1}$. We know that $\Gamma_1\subset \Per(\eta,\Gamma_1,1)\subset \Per(\eta, \Gamma_{n_{k}-1},1)$. 
   Thus,  $w\Gamma_1\subseteq \Per(\eta,\Gamma_{n_{k}+1},1)$, and consequently, using equation (\ref{Periodo-1}), there exist  $0\leq i\leq k$ and $1\leq l\leq m(i)$ such that ${w\Gamma_1=h_t^i\Gamma_1}$ with $\eta(h_t^i\gamma)=1$ for every $\gamma\in\Gamma_1$. 
   However, according to  Proposition \ref{J-sub},  $h_t^i=h_1^0=1_G$. 
Thus, we have $w\in\Gamma_1$. 

   \medskip
   
 Suppose $w\in \Gamma_{n_{s-1}+(r+1)}$, for some $1\leq s<k,1\leq r\leq m(s)$ or ($s=k$ and $1\leq r< m(k)$) such that $n_{s-1}+(r+1)\leq  n_{k}-2$. 
   
        {\bf Case $1$: }$r<m(s)$.\\
        Since $h_{r+1}^s\in \Per(\eta,\Gamma_{n_{s-1}+(r+1)+1},1)\subset \Per(\eta,\Gamma_{n_k-1},1)$, we deduce that $w h_{r+1}^s\gamma'$ belongs to $\Per(\eta,\Gamma_{n_k+1},1)$ for every $\gamma'\in \Gamma_{n_{s-1}+r+1+1}$.
        Equation (\ref{Periodo-1}) implies $w h_{r+1}^s=h_t^i\gamma'$ for some $0\leq i \leq k$, $1\leq t\leq m(i)$ and $\gamma'\in \Gamma_{n_{i-1}+1+t}$. \\
        If $s<i\leq k$ or  ($i=s$ and $r+1<t\leq m(i)$), we have $n_{s-1}+(r+1)+1\leq n_{i-1}+t$. 
        In this case, we obtain $w h_{r+1}^s\Gamma_{n_{s-1}+1+(r+1)}=h_t^i\Gamma_{n_{s-1}+1+(r+1)}$ with $\eta(h_t^i \gamma)=1$ for every $\gamma\in \Gamma_{n_{s-1}+1+(r+1)}$. 
        In particular, $\eta(h_{t}^i\gamma)=1$ for  $\gamma\in \Gamma_{n_{i-1}+t}$. This is a contradiction since $h_t^i\in J(n_{i-1}+t)$ and $\eta |_{h_t^i\Gamma_{n_{i-1}+t}}$ is not constant by Proposition \ref{J-sub}.
        Therefore, $i<s$ or ($i=s$ and $1\leq t\leq r+1$). \\
        Now, suppose that $i<s$ or ($i=s$ and $1\leq t<r+1$). Thus, $n_{s-1}+(r+1)\geq n_{i-1}+t+1$. $w\in\Gamma_{n_{s-1}+(r+1)}$ implies $h_{r+1}^s\Gamma_{n_{i-1}+1+t}=h_t^i\Gamma_{n_{i-1}+1+t}$. Consequently, $\eta(h_{r+1}^s\gamma'')=1$ for every $\gamma''\in \Gamma_{n_{i-1}+t+1}$ and again, this is impossible by Proposition \ref{J-sub}.
        Thus, $i=s$ and $t=r+1$. We conclude that   $w\in \Gamma_{n_{s-1}+(r+1)+1}$.
    
     {\bf Case $2$:}  $r=m(s)$. Under this assumption, we have that $0\leq s\leq k-1$.\\
        Since $h_1^{s+1}\in\Per(\eta,\Gamma_{n_s+1+1},1)\subset\Per(\eta,\Gamma_{n_k-1},1)$, we have that $wh_1^{s+1}\gamma$ belongs to $\Per(\eta,\Gamma_{n_k+1},1)$ for every $\gamma\in \Gamma_{n_s+1+1}$.
        Thus, $w h_1^{s+1}=h_t^i\gamma'$ for some $0\leq i\leq k-1$, $1\leq t\leq m(i)$, and $\gamma'\in \Gamma_{n_{i-1}+1+t}$.\\ 
       If $i>s+1$ or ($i=s+1$ and $1<t\leq m(s+1)$), we have that $n_{s}+2\leq n_{i-1}+t$.
        Thus, $h_{1}^{s+1}\Gamma_{n_{s}+2}=h_t^i\Gamma_{n_{s}+2}$ with $\eta(h_t^i \gamma)=1$ for every $\gamma\in \Gamma_{n_{s}+2}$, which is impossible by Proposition \ref{J-sub}. 
        Hence, $i< s+1$ or ($i=s+1$ and $t=1$). \\
        Consider the case $i<s+1$. Our hypothesis says that $w\in \Gamma_{n_{s-1}+1+m(s)}$. Since $n_s+1>n_{s-1}+m(s)\geq n_{i-1}+t$, we obtain that 
    $h_{1}^{s+1}\Gamma_{n_{i-1}+1+t}=h_t^i\Gamma_{n_{i-1}+1+t}$, which implies $\eta(h_{1}^{s+1}\gamma'')=1$ for every $\gamma''\in \Gamma_{n_{i-1}+1+t}$.

    Proposition \ref{J-sub} implies that this is only true when $i=s+1$ and $t=1$. Consequently, $w\in \Gamma_{n_{s}+2}$.
    
Applying repeatedly Case 1 and 2, we obtain that $w\in \Gamma_{n_k-1}$.
Therefore, $w\in D_{n_k-1}\cap \Gamma_{n_k-1}=\{1_G\}$, a contradiction.
Hence, there exists $g_w'\in \Per(\eta,\Gamma_{n_k-1},1)$ such that $wg_w'\notin \Per(\eta,\Gamma_{n_k+1},1)$. 

There exists $d\in D_{n_k+1}$ and $\gamma\in \Gamma_{n_k+1}$ such that $wg_w'=d\gamma$. Note that $d\notin\Per(\eta,\Gamma_{n_k+1},1)$. Indeed, if $d\in\Per(\eta,\Gamma_{n_k+1},1)$, it would imply $wg_w'\in \Per(\eta,\Gamma_{n_k+1},1)$. Moreover, $g_w'\gamma^{-1}\in\Per(\eta,\Gamma_{n_k-1},1)$. Therefore, $g_w=g_w'\gamma^{-1}$ satisfies the Lemma.
\end{proof}
For  $n_{k}\in\mathcal{M}$, $k\in\mathbb{N}$, we denote 
\begin{align*}
    Y_{n_{k}}=\bigcap_{\gamma\in\Gamma_{n_{k}}\cap D_{n_{k}+1}}\sigma^\gamma C_{n_{k}}^0.
\end{align*}

\begin{lemma}\label{Containing U_n}
For every $n_{k}\in\mathcal{M}$, $k\in\mathbb{N}$, we have that 
$U_{n_{k}}\cap \overline{O_\sigma(\eta)}\subseteq Y_{n_{k}}.$
\end{lemma}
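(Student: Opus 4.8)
The plan is to unwind the definitions: an element $x\in U_{n_k}$ is one whose values on $D_{n_k+1}$ agree with $\eta_{n_k}$, and $\eta_{n_k}$ is the $\Gamma_{n_k}$-periodization of $\eta|_{D_{n_k}}$; so for $x\in U_{n_k}\cap\overline{O_\sigma(\eta)}$ and any $\gamma\in\Gamma_{n_k}\cap D_{n_k+1}$ and $u\in D_{n_k}$ we have $x(\gamma u)=\eta_{n_k}(\gamma u)=\eta(u)$. I want to conclude that $\sigma^{\gamma^{-1}}x\in C_{n_k}^0$ for every such $\gamma$, i.e. that $\sigma^{\gamma^{-1}}x\in C_{n_k}$ and additionally vanishes on all of $J(n_k)$.

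First I would check that $\sigma^{\gamma^{-1}}x\in C_{n_k}$: since $x\in\overline{O_\sigma(\eta)}$ and $\gamma\in\Gamma_{n_k}\cap D_{n_k+1}$, I need $\Per(\sigma^{\gamma^{-1}}x,\Gamma_{n_k},\alpha)=\Per(\eta,\Gamma_{n_k},\alpha)$ for $\alpha\in\{0,1\}$. Because $n_k=m_k-1$, the construction of $\eta$ gives $\eta(g\gamma')=0$ for every $g\in J(n_k)$ and $\gamma'\in\Gamma_{n_k+1}$; combined with $J(n_k)\subseteq\Per(\eta,\Gamma_{n_k+1})$ (Proposition \ref{J-sub}) and equation (\ref{T1}), this means $\Per(\eta,\Gamma_{n_k})=\Per(\eta,\Gamma_{n_k+1})\supseteq D_{n_k}$, so in fact every element of $D_{n_k}$ is $\Gamma_{n_k}$-periodic for $\eta$. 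Using the relation $x(\gamma u)=\eta(u)$ for all $u\in D_{n_k}$ and $\gamma\in\Gamma_{n_k}\cap D_{n_k+1}$ (together with normality of $\Gamma_{n_k}$ and the fact that $\Gamma_{n_k}\cap D_{n_k+1}$ meets every coset of $\Gamma_{n_k+1}$ inside $\Gamma_{n_k}$, via Proposition \ref{decom}(4)), I would propagate periodicity to conclude $\sigma^{\gamma^{-1}}x$ has exactly the same $\Gamma_{n_k}$-periodic structure as $\eta$, hence lies in $C_{n_k}$.

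Next, since $J(n_k)\subseteq D_{n_k}$ and $\eta|_{J(n_k)}\equiv 0$ (again because $n_k=m_k-1$ forces the ``all zeros'' clause in Step $s+1$ of the construction), the agreement $\sigma^{\gamma^{-1}}x(g)=x(\gamma g)=\eta(g)=0$ for all $g\in J(n_k)$ shows $\sigma^{\gamma^{-1}}x\in C_{n_k}^0$. As this holds for every $\gamma\in\Gamma_{n_k}\cap D_{n_k+1}$, we get $x\in\bigcap_{\gamma}\sigma^{\gamma}C_{n_k}^0=Y_{n_k}$, which is the claim. (A small bookkeeping point: one must be careful that $\sigma^{\gamma^{-1}}x\in\sigma^\gamma C_{n_k}^0$ is phrased with the correct side — $x\in U_{n_k}$ gives the values of $x$ on $\gamma\cdot D_{n_k}$, so $\sigma^{\gamma^{-1}}x$ has the right values on $D_{n_k}$; the definition of $Y_{n_k}$ as written applies $\sigma^\gamma$ to $C_{n_k}^0$, so one checks the indices match up to replacing $\gamma$ by $\gamma^{-1}$, which is fine since $\gamma\mapsto\gamma^{-1}$ permutes $\Gamma_{n_k}\cap D_{n_k+1}$ up to adjusting the fundamental domain.)

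The main obstacle I anticipate is the verification that membership in $U_{n_k}$ plus membership in $\overline{O_\sigma(\eta)}$ is enough to force the \emph{full} $\Gamma_{n_k}$-periodicity needed for $C_{n_k}$, rather than just agreement on the finite window $D_{n_k+1}$; the key leverage is that for this special index $n_k=m_k-1$ one has $\Per(\eta,\Gamma_{n_k})\cap D_{n_k}=D_{n_k}$ (equivalently $J(n_k)$ consists of ``newly frozen'' positions whose $\Gamma_{n_k+1}$-orbit is already monochromatic in a way that makes them $\Gamma_{n_k}$-periodic in $\eta$), so the finite window actually determines the periodic part completely. Once that is in hand the rest is a direct translation of indices. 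This is why the lemma is stated only for $n_k\in\mathcal{M}$ and not for arbitrary $n$.
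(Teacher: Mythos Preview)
Your argument has a genuine gap at the point you yourself flag as ``the main obstacle.'' The claim that $\Per(\eta,\Gamma_{n_k})=\Per(\eta,\Gamma_{n_k+1})$, equivalently $D_{n_k}\subseteq\Per(\eta,\Gamma_{n_k})$, is \emph{false}: Proposition~\ref{J-sub} says precisely that $J(n_k)\subseteq\Per(\eta,\Gamma_{n_k+1})\setminus\Per(\eta,\Gamma_{n_k})$, and $J(n_k)\neq\emptyset$. Concretely, by Proposition~\ref{auxiliar0} we have $J(n_k+1)=\bigcup_{\gamma\in(\Gamma_{n_k}\cap D_{n_k+1})\setminus\{1_G\}}\gamma J(n_k)$, and at step $n_k+2=m_k+1$ the construction places the value $1$ at $h_1^{k+1}\in J(n_k+1)$; writing $h_1^{k+1}=\gamma_0 g_0$ with $g_0\in J(n_k)$ gives $\eta(g_0)=0$ but $\eta(\gamma_0 g_0)=1$ with $\gamma_0\in\Gamma_{n_k}$, so $g_0\notin\Per(\eta,\Gamma_{n_k})$. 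Your parenthetical that the $\Gamma_{n_k+1}$-orbit of $J(n_k)$ being monochromatic ``makes them $\Gamma_{n_k}$-periodic'' is therefore wrong, and with it the ``propagation of periodicity'' step collapses: knowing $x$ only on the finite window $D_{n_k+1}$ gives no control over $\Per(x,\Gamma_{n_k},\alpha)$, which is a global condition.

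The paper's proof does not try to read off membership in $C_{n_k}$ from the window. Instead it uses that $\{\sigma^{v^{-1}}C_{n_k}:v\in D_{n_k}\}$ is a clopen partition of $\overline{O_\sigma(\eta)}$, so any $x\in U_{n_k}\cap\overline{O_\sigma(\eta)}$ is $\sigma^{v^{-1}}y$ for some $y\in C_{n_k}$ and $v\in D_{n_k}$; the real work is ruling out $v\neq 1_G$. This is done by a contradiction argument that exploits the fine placement of the $1$'s in $\eta$: Lemma~\ref{Good-D's} produces, for each nontrivial $w\in D_{n_k-1}$, a witness $g_w\in\Per(\eta,\Gamma_{n_k-1},1)$ with $wg_w\in D_{n_k+1}\setminus\Per(\eta,\Gamma_{n_k+1},1)$, and condition~(\ref{Linking}) handles the residual case $w=1_G$, $v\in\Gamma_{n_k-1}\setminus\{1_G\}$. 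In both cases one finds a coordinate in $D_{n_k+1}$ where the $U_{n_k}$ pattern forces $x=0$ while the shifted periodic structure of $y\in C_{n_k}$ forces $x=1$. This combinatorial separation of $\eta$ from its nontrivial shifts is the missing idea; once $v=1_G$ is established, the conclusion $x\in Y_{n_k}$ follows exactly as in your last computation.
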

\begin{proof}
Let $n_{k}\in\mathcal{M}$ for some $k\geq 1$ and $x\in U_{n_{k}}\cap \overline{O_\sigma(\eta)}$. 
Recall that $\{\sigma^{v^{-1}}C_{n_k}\mid v\in D_{n_k}\}$ is a partition of $\overline{O_\sigma(\eta)}$. Therefore, there exist  $y\in C_{n_k}$ and $v\in D_{n_k}$ such that $x=\sigma^{v^{-1}}y$. 
Suppose that $v\neq 1_G$.
There exists $w\in D_{n_k-1}$ satisfying $vw\in \Gamma_{n_k-1}$. If $w=1_G$, then $v\in \Gamma_{n_k-1}\cap D_{n_k}$.  Let $h:=h_{m(k)}^{k}\in J(n_k-1)$ and set $g:=v^{-1}h$, which is in $D_{n_k+1}$ by the condition given in \ref{Linking}. Since $h\in\Per(\eta,\Gamma_{n_k},1)$, we have
\begin{align}\label{vgh}
    y(vg)=y(h)=1.
\end{align}
On the other hand, $g=\gamma d$ for some $\gamma\in\Gamma_{n_k}\cap D_{n_k+1}$ and $d\in D_{n_k}$.

If $d\in \Per(\eta,\Gamma_{n_k})$,  Lemma \ref{Per-eq} implies that there exist $d'\in D_{n_k-1}$ and $\gamma'\in \Gamma_{n_k}$ such that $d=\gamma' d'$. 
Thus, $g=v^{-1}h=\gamma\gamma' d'$. 
Since $h, d'\in D_{n_k-1}$, and $v^{-1},\gamma\gamma'$ are in $\Gamma_{n_k-1}$, we obtain $h=d'$.
For that reason, $v^{-1}=\gamma\gamma'\in\Gamma_{n_k}$, and consequently, $v\in \Gamma_{n_k}\cap D_{n_k}=\{1_G\}$, a contradiction.
Therefore, $d\in J(n_k)$, and this implies $\eta(d)=0$.
Using equation (\ref{vgh}), the previous argument and the fact that $x\in U_{n_k}$, we obtain that
\begin{align*}
1=y(h)=y(vg)=x(g)=x(\gamma d)=\eta(d)=0,
\end{align*}
a contradiction.

Suppose that $w\neq1_G$. By Lemma \ref{Good-D's} there exists $g_w\in \Per(\eta,\Gamma_{n_k-1},1)$ such that $wg_w\notin \Per(\eta,\Gamma_{n_k+1},1)$ and $wg_w\in D_{n_k+1}$. 
Thus, $wg_w\in J(n_k+1)\cup \Per(\eta, \Gamma_{n_k+1},0)$. 
There exist $d\in D_{n_k}$ and $\gamma\in \Gamma_{n_k}\cap D_{n_k+1}$ such that $wg_w=\gamma d$. 
If $wg_w\in J(n_k+1)$, then $d\in J(n_k)$ by Proposition \ref{auxiliar0}. Since $x\in U_{n_k}$, we obtain
\begin{align*}
    x(wg_w)=x(\gamma d)=\eta(d)=0.
\end{align*}
If $wg_w\in\Per(\eta,\Gamma_{n_k+1},0)$, then $d\in\Per(\eta,\Gamma_{n_k},0)\cup J(n_k)$. If $d\in J(n_k)$, we obtain that $\gamma=1_G$ by Proposition \ref{auxiliar0}. We conclude $wg_w\in J(n_k)\subset D_{n_k}$. Since $x\in U_{n_k}$,
\begin{align*}
    x(wg_w)=\eta(wg_w)=0.
\end{align*}
When $d\in \Per(\eta,\Gamma_{n_k},0)$, we have
\begin{align*}
    x(wg_w)=x(\gamma d)=\eta(d)=0.
\end{align*}
In any case, we conclude that
\begin{align}\label{wg0}
    x(wg_w)=0.
\end{align}

 On the other hand, since $g_w\in \Per(\eta,\Gamma_{n_k-1},1)$, $y\in C_{n_k}$ and $vw\in \Gamma_{n_k-1}$, we obtain 
\begin{align}\label{wg1}
    x(wg_w)=y(vw g_w)=1.
\end{align}

We have a contradiction with equations (\ref{wg0}) and (\ref{wg1}). 

Therefore, we conclude that $v=1_G$. Hence, $x\in C_{n_k}$. Note that $x\in U_{n_k}$ implies 
\begin{align*}
    x(\gamma g)=\eta(g)=0 \mbox{ for every }g\in J(n_k)\mbox{ and }\gamma\in \Gamma_{n_k}\cap D_{n_k+1}.
\end{align*}
Thus, $x\in \bigcap_{\gamma\in \Gamma_{n_k}\cap D_{n_k+1}}\sigma^{\gamma} C_{n_k}^0=Y_{n_k}$.
\end{proof}

\medskip

\begin{lemma}\label{Y'n containing}  Let $n_{k_j}\in \mathcal{N}$ with $j\in\mathbb{N}$. Then the following relationship holds
\begin{align*}
    \bigcup_{v\in D_{n_{k_j}+1}}\sigma^{v^{-1}} Y_{n_{k_j}}\subseteq \bigcup_{u\in D_{n_{k_j}}}\sigma^{u^{-1}}C_{n_{k_j}}^0.
\end{align*}    
\end{lemma}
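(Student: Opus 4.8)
The plan is to deduce the inclusion purely from the nested structure of the fundamental domains in Proposition~\ref{decom} together with the very definition of $Y_{n_{k_j}}$. Write $n=n_{k_j}$ for brevity. Since we have arranged $t_i=i$, part~(4) of Proposition~\ref{decom} applied with $j=n+1$ and $i=n$ gives
\begin{align*}
    D_{n+1}=\bigcup_{\gamma\in D_{n+1}\cap\Gamma_n}\gamma D_n .
\end{align*}
Thus every $v\in D_{n+1}$ can be factored as $v=\gamma u$ with $\gamma\in D_{n+1}\cap\Gamma_n$ and $u\in D_n$.

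Next I would take an arbitrary point $x$ in the left-hand side, say $x=\sigma^{v^{-1}}y$ with $v\in D_{n+1}$ and $y\in Y_n$, and use the factorization $v=\gamma u$ above. Because $\gamma$ lies in $\Gamma_n\cap D_{n+1}$, it is precisely one of the indices over which the intersection $Y_n=\bigcap_{\gamma'\in\Gamma_n\cap D_{n+1}}\sigma^{\gamma'}C_n^0$ is taken, so $y\in\sigma^{\gamma}C_n^0$, i.e.\ $\sigma^{\gamma^{-1}}y\in C_n^0$. Since $\sigma$ is a left action, $\sigma^{v^{-1}}=\sigma^{u^{-1}\gamma^{-1}}=\sigma^{u^{-1}}\circ\sigma^{\gamma^{-1}}$, and therefore
\begin{align*}
    x=\sigma^{v^{-1}}y=\sigma^{u^{-1}}\bigl(\sigma^{\gamma^{-1}}y\bigr)\in\sigma^{u^{-1}}C_n^0\subseteq\bigcup_{u'\in D_n}\sigma^{u'^{-1}}C_n^0 ,
\end{align*}
which is exactly the right-hand side. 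As $x\in\bigcup_{v\in D_{n+1}}\sigma^{v^{-1}}Y_n$ was arbitrary, the inclusion follows.

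I do not expect a genuine obstacle here: the statement is essentially a bookkeeping consequence of how $Y_n$ is built — as an intersection over exactly the translates $\sigma^\gamma C_n^0$ with $\gamma\in\Gamma_n\cap D_{n+1}$ — combined with the way $D_{n+1}$ decomposes over $D_n$ along coset representatives in $\Gamma_n$. The only points requiring a little care are keeping the left-action convention straight, so that $\sigma^{v^{-1}}$ factors correctly as $\sigma^{u^{-1}}\circ\sigma^{\gamma^{-1}}$ along $v=\gamma u$, and checking that the element $\gamma$ produced by Proposition~\ref{decom} really is admissible as an index of the intersection defining $Y_n$, which it is since $\gamma\in D_{n+1}\cap\Gamma_n$.
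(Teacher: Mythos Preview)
Your argument is correct and is essentially identical to the paper's proof: both factor $v\in D_{n+1}$ as $v=\gamma u$ via Proposition~\ref{decom} and then use that $\gamma\in\Gamma_n\cap D_{n+1}$ is one of the indices in the intersection defining $Y_n$ to conclude $\sigma^{v^{-1}}Y_n\subseteq\sigma^{u^{-1}}C_n^0$. You have simply spelled out the action-composition step $\sigma^{v^{-1}}=\sigma^{u^{-1}}\circ\sigma^{\gamma^{-1}}$ in more detail than the paper does.
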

\begin{proof}
    Let $x\in \sigma^{v^{-1}}Y_{n_{k_j}}$ for some $v\in D_{n_{k_j}+1}$. 
    Therefore, there exist $u\in D_{n_{k_j}}$ and $\gamma\in \Gamma_{n_{k_j}}\cap D_{n_{k_j}+1}$ such that $v=\gamma u$.
    The definition of $Y_{n_{k_j}}$ implies that $x\in \sigma^{u^{-1}} C_{n_{k_j}}^0$.
\end{proof}

\medskip

Corollary \ref{no-empty} implies that $\eta$ is irregular. Therefore, Remark \ref{irregularity} implies
\begin{align*}
    \lim_{n\to\infty}\lim_{k\to\infty}\prod_{l=1}^{k-1}\left(1-\dfrac{|D_{n+l}|}{|D_{n+l+1}|}\right)=1.
\end{align*}
For every $n\in\mathbb{N}$, we define
\begin{align*}
    Z_n=\bigcup_{v\in D_n}\sigma^{v^{-1}} C_{n}^0.
\end{align*}

\begin{lemma}\label{Measure-1} Let $\mu'\in M_G(\overline{O_\sigma(\eta)})$. We have that
        $\lim_{j\to\infty} \mu'(Z_{n_{k_j}})=1$.
\end{lemma}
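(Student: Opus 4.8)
The plan is to show that the sequence of sets $(Z_{n_{k_j}})_{j\in\mathbb{N}}$, or at least a cofinal family of them, has measure tending to $1$, and then to upgrade this to the statement for an arbitrary invariant measure $\mu'$ via Corollary~\ref{equals-partitions}. First I would work with the fixed measure $\mu$ (the limit point of $(\mu_{n_{k_j}})_j$) and combine the previous three lemmas in a chain: by Lemma~\ref{Containing U_n}, $U_{n_{k_j}}\cap\overline{O_\sigma(\eta)}\subseteq Y_{n_{k_j}}$, and hence $\bigcup_{v\in D_{n_{k_j}+1}}\sigma^{v^{-1}}(U_{n_{k_j}}\cap\overline{O_\sigma(\eta)})\subseteq\bigcup_{v\in D_{n_{k_j}+1}}\sigma^{v^{-1}}Y_{n_{k_j}}$, which by Lemma~\ref{Y'n containing} is contained in $\bigcup_{u\in D_{n_{k_j}}}\sigma^{u^{-1}}C_{n_{k_j}}^0=Z_{n_{k_j}}$. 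Therefore
\begin{align*}
    \mu(Z_{n_{k_j}})\;\geq\;\mu\!\left(\bigcup_{v\in D_{n_{k_j}+1}}\sigma^{v^{-1}}U_{n_{k_j}}\right).
\end{align*}
By the second inequality in Lemma~\ref{U'ns}, the right-hand side is bounded below by $\lim_{j\to\infty}\prod_{l=1}^{j}\bigl(1-\frac{|D_{n_{k_j}+l}|}{|D_{n_{k_j}+1+l}|}\bigr)$, and the irregularity of $\eta$ (Corollary~\ref{no-empty}) together with Remark~\ref{irregularity} gives that this liminf-type expression equals $1$, i.e. $\lim_{n\to\infty}\lim_{k\to\infty}\prod_{l=1}^{k-1}\bigl(1-\frac{|D_{n+l}|}{|D_{n+l+1}|}\bigr)=1$. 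Hence $\lim_{j\to\infty}\mu(Z_{n_{k_j}})=1$.

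Next I would pass from $\mu$ to an arbitrary $\mu'\in M_G(\overline{O_\sigma(\eta)})$. The point is that $Z_{n_{k_j}}$ is, by construction, a union over the partition $\mathcal{P}_{n_{k_j}}$ of exactly the atoms of the form $\sigma^{v^{-1}}C_{n_{k_j}}^0$; by invariance of $\mu'$ and the fact that $\{\sigma^{v^{-1}}C_n : v\in D_n\}$ is a partition into $|D_n|$ translates of $C_n$ (so each translate has $\mu'$-measure $\frac{1}{|D_n|}\mu'(\text{something})$ — more precisely $\mu'(\sigma^{v^{-1}}C_n^0)=\mu'(C_n^0)/|D_n|\cdot|D_n|$ sums correctly), we get $\mu'(Z_{n_{k_j}})=|D_{n_{k_j}}|\cdot\frac{1}{|D_{n_{k_j}}|}\,\mu'(C_{n_{k_j}}^0)$ — concretely $\mu'(Z_n)=\mu'(C_n^0)$ once one accounts for invariance, since the $|D_n|$ translates partition $\overline{O_\sigma(\eta)}$ and each carries measure $\mu'(C_n^0)/|D_n|$ only after normalizing; the clean way is: $\mu'(Z_n)=\sum_{v\in D_n}\mu'(\sigma^{v^{-1}}C_n^0)=|D_n|\,\mu'(\sigma^{1_G^{-1}}C_n^0)$ is wrong in general, so instead observe that $C_n^0$ meets a single partition class and $\mu'(Z_n)$ depends only on $\mu'(C_n^i)$, $i\in\{0,1\}$, via a fixed affine formula. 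By Corollary~\ref{equals-partitions}, $\mu'(C_n^i)=\mu(C_n^i)$ for all $n$ and $i\in\{0,1\}$; therefore $\mu'(Z_{n_{k_j}})=\mu(Z_{n_{k_j}})$ for every $j$, and the conclusion $\lim_{j\to\infty}\mu'(Z_{n_{k_j}})=1$ follows from the computation for $\mu$.

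The main obstacle I anticipate is the bookkeeping in the last paragraph: making precise that $\mu'(Z_n)$ is determined by the pair $(\mu'(C_n^0),\mu'(C_n^1))$ through an affine relation that is the same for all invariant measures, so that Corollary~\ref{equals-partitions} can be applied. This requires unwinding how $Z_n=\bigcup_{v\in D_n}\sigma^{v^{-1}}C_n^0$ decomposes relative to the partition $\{\sigma^{v^{-1}}C_n^i : v\in D_n,\ i\in\{0,1\}\}$ of $\overline{O_\sigma(\eta)}$ and using $\mu'(\sigma^{v^{-1}}C_n^i)=\mu'(C_n^i)$ for a single representative — i.e., that $\sigma$-invariance makes $\mu'(\sigma^g A)=\mu'(A)$, so each of the $|D_n|$ translates $\sigma^{v^{-1}}C_n^0$ has the same measure, giving $\mu'(Z_n)$ as $|D_n|$ times that common value, which in turn is $\mu'(C_n^0)$; combined with $\mu'(C_n^0)+\mu'(C_n^1)=\mu'(C_n)=1/|D_n|$, this yields $\mu'(Z_n)=|D_n|\,\mu'(C_n^0)$. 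Then $\mu'(Z_{n_{k_j}})=|D_{n_{k_j}}|\mu'(C_{n_{k_j}}^0)=|D_{n_{k_j}}|\mu(C_{n_{k_j}}^0)=\mu(Z_{n_{k_j}})\to 1$, completing the proof. Everything else is a direct concatenation of Lemmas~\ref{Containing U_n}, \ref{Y'n containing}, \ref{U'ns} and the irregularity identity from Remark~\ref{irregularity}.
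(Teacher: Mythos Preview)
Your proposal is correct and follows essentially the same approach as the paper: chain Lemmas~\ref{Containing U_n}, \ref{Y'n containing}, and \ref{U'ns} to get $\mu(Z_{n_{k_j}})\geq \lim_{s\to\infty}\prod_{l=1}^{s}\bigl(1-\frac{|D_{n_{k_j}+l}|}{|D_{n_{k_j}+1+l}|}\bigr)$, invoke the irregularity of $\eta$ (via Remark~\ref{irregularity}) to conclude $\lim_j\mu(Z_{n_{k_j}})=1$, and then transfer to an arbitrary $\mu'$ using Corollary~\ref{equals-partitions}. Your bookkeeping in the final paragraph is the right one---$\mu'(Z_n)=|D_n|\,\mu'(C_n^0)$ by invariance and disjointness of the $\sigma^{v^{-1}}C_n^0$---and the paper uses exactly this (implicitly) when it writes $\mu(Z_{n_{k_j}})=\mu'(Z_{n_{k_j}})$; you should simply drop the hesitant middle paragraph and keep the clean computation.
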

\begin{proof}
    By lemmas \ref{Y'n containing}, \ref{Containing U_n} and \ref{U'ns}, we get
    \begin{align*}
        \mu\left(\bigcup_{u\in D_{n_{k_j}}}\sigma^{u^{-1}}C_{n_{k_j}}^0\right)\geq& \;\mu\left(\bigcup_{v\in D_{n_{k_j}+1}}\sigma^{v^{-1}}Y_{n_{k_j}}\right)\geq \mu\left(\bigcup_{v\in D_{n_{k_j}+1}}\sigma^{v^{-1}}U_{n_{k_j}}\right)\\
        \geq & \lim_{s\to\infty}\prod_{l=1}^{s}\left(1-\dfrac{|D_{n_{k_j}+l}|}{|D_{n_{k_j}+1+l}|}\right).
    \end{align*}
    Therefore, $ \lim_{j\to\infty}\mu(Z_{n_{k_j}})=1$.
 Corollary \ref{equals-partitions} implies that 
 $\mu(C_{n}^i)=\mu'(C_n^i)$ for every 
$C_n^i\in\mathcal{P}_n$, $n\in \mathbb{N}$, $i\in\{0,1\}$.
 In particular, $\mu(Z_{n_{k_j}})=\mu'(Z_{n_{k_j}})$ for every $j\in\mathbb{N}$, and thus, the Lemma is proven.
\end{proof}

\begin{lemma}\label{Containings}
Let $n\geq 1$, $\gamma,\widetilde{\gamma}\in (\Gamma_{n}\cap D_{n+1})\setminus\{1_G\}$ and $g\in J(n)$. We have that
\begin{enumerate}
    \item $\sigma^{\gamma^{-1}} C_{n+1}^0\subseteq C_{n}^0$.
    \item $\sigma^{\gamma^{-1}} C_{n+1,\gamma g}\subseteq C_{n,g}$.
    \item $\sigma^{{\gamma}^{-1}} C_{n+1,\widetilde{\gamma} g}\subseteq C_{n}^0$, when $\gamma\neq \widetilde{\gamma}$.
    \item If $n\in\mathcal{M}$, then $C_{n+1}\subseteq C_{n}^0$.
    \item If $n=n_{k-1}+s'$, with $n_{k-1}\in\mathcal{M}$ and $1\leq s'\leq m(k)$, then $C_{n+1}\subseteq C_{n,h_{s'}^k}$.
\end{enumerate}
\end{lemma}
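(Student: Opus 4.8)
The plan is to reduce all five inclusions to two preliminary observations about the cylinders $C_m$, and then, for the inclusions involving a shift, to move a point of $\sigma^{\gamma^{-1}}(\,\cdot\,)$ by $\gamma$ back inside $C_{n+1}$ and to read off its values on $J(n)$ from the relation $\gamma J(n)\subseteq J(n+1)$ provided by Proposition \ref{auxiliar0}. The first observation I would record is that $C_{n+1}\subseteq C_n$, and hence $\sigma^{\gamma^{-1}}C_{n+1}\subseteq C_n$ for every $\gamma\in\Gamma_n\cap D_{n+1}$. The inclusion $C_{n+1}\subseteq C_n$ holds because, for $z\in\overline{O_\sigma(\eta)}$ and $g\in G$, the coset $g\Gamma_n$ is the union of the left cosets $g\gamma\Gamma_{n+1}$, $\gamma\in\Gamma_n$, so $g\in\Per(z,\Gamma_n,\alpha)$ exactly when $g\gamma\in\Per(z,\Gamma_{n+1},\alpha)$ for every $\gamma\in\Gamma_n$; thus $\Per(z,\Gamma_n,\alpha)$ is determined by $\Per(z,\Gamma_{n+1},\alpha)$, and $x\in C_{n+1}$ forces $\Per(x,\Gamma_n,\alpha)=\Per(\eta,\Gamma_n,\alpha)$ for $\alpha\in\{0,1\}$. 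Since $\Gamma_n$ is normal, \cite[Lemma 2.5]{CeCoGo23} gives $\sigma^{\gamma^{-1}}C_n=C_n$ for $\gamma\in\Gamma_n$, which upgrades this to the shifted statement.

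The second observation, which I expect to be the technical heart, is that \emph{if $y\in C_{n+1}$ then $y(h)=1$ for at most one $h\in J(n+1)$}. I would first check that $\sigma^a\eta\in C_{n+1}$ if and only if $a\in\Gamma_{n+1}$; this follows from Proposition \ref{pimap} (the factor map $\pi$ is equivariant and sends $\eta$ to the identity element of $\overleftarrow{G}$) together with \cite[Lemma 2.5]{CeCoGo23}. Since $C_{n+1}$ is clopen and $O_\sigma(\eta)$ is dense, $C_{n+1}$ is the closure of $\{\sigma^a\eta:a\in\Gamma_{n+1}\}$. For $a\in\Gamma_{n+1}$, Proposition \ref{Partitions-C} applied with $k=n+1$ to the element $a^{-1}\in\Gamma_{n+1}$ yields at most one $h\in J(n+1)$ with $\sigma^a\eta(h)=\eta(a^{-1}h)=1$; as $J(n+1)$ is finite, the function $z\mapsto\#\{h\in J(n+1):z(h)=1\}$ is locally constant and hence stays $\le 1$ on the closure $C_{n+1}$. (Incidentally this is precisely the disjointness of the family $\{C_{n+1,h}:h\in J(n+1)\}$ used tacitly in the proof of Proposition \ref{at-least}.)

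With both observations in hand each item is short. Given $x$ in the left-hand set, put $y:=\sigma^\gamma x$, with $\gamma=1_G$ in items~(4) and~(5); then $y$ lies in the corresponding sub-cylinder of $C_{n+1}$, so $x=\sigma^{\gamma^{-1}}y\in C_n$ by the first observation, and $x(g)=y(\gamma g)$ for all $g$. For~(1), $y\in C_{n+1}^0$ and $\gamma g\in J(n+1)$ for every $g\in J(n)$ (Proposition \ref{auxiliar0}, $\gamma\neq 1_G$), so $x\equiv 0$ on $J(n)$ and $x\in C_n^0$. For~(2), $y\in C_{n+1,\gamma g}$ gives $x(g)=y(\gamma g)=1$, so $x\in C_{n,g}$. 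For~(3), one first checks $\widetilde\gamma g\neq\gamma h$ for every $h\in J(n)$ (otherwise $\gamma^{-1}\widetilde\gamma=hg^{-1}\in\Gamma_n$, which, since $g,h\in D_n$, forces $g=h$ and then $\gamma=\widetilde\gamma$), so $\widetilde\gamma g\notin\gamma J(n)\subseteq J(n+1)$; since $y(\widetilde\gamma g)=1$, the second observation makes $y$ vanish on $J(n+1)\setminus\{\widetilde\gamma g\}$, in particular on $\gamma J(n)$, so $x\equiv 0$ on $J(n)$ and $x\in C_n^0$. For~(4), if $n\in\mathcal{M}$ then $n+1=m_k$ and step $n+1$ of the construction sets $\eta\equiv 0$ on $J(n)\Gamma_{n+1}$, i.e.\ $J(n)\subseteq\Per(\eta,\Gamma_{n+1},0)=\Per(x,\Gamma_{n+1},0)$ for $x\in C_{n+1}$, whence $x\equiv 0$ on $J(n)$ and $x\in C_n^0$. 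For~(5), if $n=n_{k-1}+s'$ then step $n+1=m_{k-1}+s'$ is a middle step selecting $h_{s'}^k\in J(n)$ with $\eta\equiv 1$ on $h_{s'}^k\Gamma_{n+1}$, i.e.\ $h_{s'}^k\in\Per(\eta,\Gamma_{n+1},1)=\Per(x,\Gamma_{n+1},1)$ for $x\in C_{n+1}$, so $x(h_{s'}^k)=1$ and $x\in C_{n,h_{s'}^k}$.

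The one place that needs genuine care is the second observation: realising $C_{n+1}$ as the closure of $\{\sigma^a\eta:a\in\Gamma_{n+1}\}$ while keeping track of the left/right and inverse conventions in Proposition \ref{pimap} and \cite[Lemma 2.5]{CeCoGo23}. Everything else is bookkeeping with Proposition \ref{auxiliar0} and the explicit step-by-step definition of $\eta$.
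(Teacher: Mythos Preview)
Your argument is correct and follows the same skeleton as the paper's: use $C_{n+1}\subseteq C_n$ together with $\sigma^{\gamma^{-1}}C_n=C_n$ for $\gamma\in\Gamma_n$ to land in $C_n$, then read off values on $J(n)$ via $\gamma J(n)\subseteq J(n+1)$ from Proposition~\ref{auxiliar0}, and for (4)--(5) use directly that $J(n)\subseteq\Per(\eta,\Gamma_{n+1})$ so that $x|_{J(n)}=\eta|_{J(n)}$ for $x\in C_{n+1}$.

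The one genuine difference is your ``second observation'': that any $y\in C_{n+1}$ takes the value $1$ on at most one element of $J(n+1)$ (equivalently, the sets $C_{n+1,h}$, $h\in J(n+1)$, are pairwise disjoint). The paper's proof of item~(3) tacitly uses this---from $y\in C_{n+1,\widetilde\gamma g}$ and $\gamma g'\in J(n+1)$ it jumps to $y(\gamma g')=0$ without comment---whereas you supply a clean justification by realising $C_{n+1}$ as the closure of $\{\sigma^a\eta:a\in\Gamma_{n+1}\}$, invoking Proposition~\ref{Partitions-C} on the dense orbit points, and passing to the closure by continuity of $z\mapsto\#\{h\in J(n+1):z(h)=1\}$. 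You also make explicit the check that $\widetilde\gamma g\notin\gamma J(n)$ when $\gamma\neq\widetilde\gamma$, which the paper omits. So your proof is essentially the paper's, but with a gap in item~(3) properly closed; this same disjointness is used later (e.g.\ in Proposition~\ref{at-least}) and is worth having on record.
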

\begin{proof} 
   Using that $C_{n+1}^0\subseteq C_{n+1}\subseteq C_{n}$, we obtain that $\sigma^{\overline{\gamma}^{-1}} C_{n+1}^0\subseteq C_{n}$  for every $\overline{\gamma}\in\Gamma_{n}$.
   Let $x\in C_{n+1}^0$. 
   Then, for every $g\in J(n)$ we have that $\gamma g\in J(n+1)$. Thus, $\sigma^{\gamma^{-1}}x(g)=x(\gamma g)=0$. Therefore, $\sigma^{\gamma^{-1}} C_{n+1}^0\subseteq C_{n}^0$.

   \medskip
   
   For \textit{(2)} and \textit{(3)}, note that $\sigma^{\gamma^{-1}}C_{n+1}\subseteq C_{n}$.
Let $x\in \sigma^{\gamma^{-1}} C_{n+1,\widetilde{\gamma} g}$.
This implies there exists $y\in C_{n+1,\widetilde{\gamma} g}$ such that $x=\sigma^{\gamma^{-1}}y$. Let $g'\in J(n)$.
Thus, $\gamma g'\in J(n+1)$, and hence, $\sigma^{\gamma^{-1}}y(g')=y(\gamma g')$. Therefore, $\sigma^{\gamma^{-1}} C_{n+1,\widetilde{\gamma} g}\subseteq C_{n,g}$ when $\gamma=\widetilde{\gamma}$, or $\sigma^{{\gamma}^{-1}} C_{n+1,\widetilde{\gamma} g}\subseteq C_{n}^0$  when $\gamma\neq\widetilde{\gamma}$.

\medskip

Suppose that $n\in\mathcal{N}$. 
Proposition \ref{J-sub} implies that $D_{n}\subseteq \Per(\eta,\Gamma_{n+1})$. Thus, if $x\in C_{n+1}$, then $x(g)=\eta(g)=0$ for every $g\in J(n)$. Therefore, $C_{n+1}\subseteq C_{n}^0$, and we obtain \textit{(4)}.

   \medskip

    For the proof of \textit{(5)}, let $x\in C_{n+1}\subseteq C_{n}$, and note that $x(g)=\eta(g)$ for every $g\in J(n)$. 
    Thus, $x\in C_{n,h_{s'}^k}$.
\end{proof}

We denote \begin{align*}
W_n=\bigcup_{v\in D_{n-1}}\sigma^{v^{-1}}\bigcup_{g\in J(n-1)}\bigcup_{\widetilde{\gamma} \in (\Gamma_{n-1}\cap D_n)\setminus \{1_G\}}\bigcup_{\gamma\in (\Gamma_{n-1}\cap D_n)\setminus \{1_G,\widetilde{\gamma}\}} \sigma^{\gamma^{-1}} C_{n,\widetilde{\gamma} g},
\end{align*}
and $[n,m)=\{s\in\mathbb{Z}\mid n\leq s< m\}$ for every $n,m\in\mathbb{N}$.

\begin{lemma}\label{Z's}
 If $n\in\mathcal{M}$, then
        \begin{align}\label{cont-1}
            Z_{n}=Z_{n+1}\cup W_{n+1}\cup\bigcup_{v\in D_{n}}\sigma^{v^{-1}} C_{n+1}^1.
        \end{align}
         When $n\notin\mathcal{M}$,
        \begin{align}\label{cont-2}
            Z_{n}\subseteq Z_{n+1}\cup W_{n+1}.
        \end{align}

    Furthermore, if $n_{k_j},n_{k_{j+1}}$ are two consecutives elements in $\mathcal{N}$ with $n_{k_j}<n_{k_{j+1}}$, then 
    \begin{align}\label{cont-3}
        Z_{n_{k_j}}\subseteq  Z_{n_{k_{j+1}}}\cup\bigcup_{r=n_{k_j}+1}^{n_{k_{j+1}}} W_r\cup\bigcup_{m\in\mathcal{M}\cap[n_{k_j}, n_{k_{j+1}})}\bigcup_{v\in D_{m}}\sigma^{v^{-1}}C_{m+1}^1.
    \end{align}
\end{lemma}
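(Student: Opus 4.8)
The plan is to prove the three inclusions by carefully tracking the partition refinements governed by Lemma \ref{Containings}. The starting point is the observation that the partition $\mathcal{P}_{n+1}$ refines $\mathcal{P}_n$: every atom $\sigma^{v^{-1}}C_{n+1}^i$ with $v\in D_{n+1}$, $i\in\{0,1\}$, sits inside a unique atom of $\mathcal{P}_n$. Writing $v=\gamma u$ with $u\in D_n$ and $\gamma\in(\Gamma_n\cap D_{n+1})$, the five cases in Lemma \ref{Containings} tell us exactly which $\mathcal{P}_n$-atom contains $\sigma^{v^{-1}}C_{n+1}^i$, according to whether $\gamma=1_G$ or not, whether $i=0$ or $i=1$, and whether $n\in\mathcal{M}$ or $n$ is of the form $n_{k-1}+s'$.

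\textbf{Inclusion (\ref{cont-1}) for $n\in\mathcal{M}$.} First I would decompose $Z_n=\bigcup_{u\in D_n}\sigma^{u^{-1}}C_n^0$ by refining each $C_n^0$ along the partition $\{\sigma^{\gamma^{-1}}C_{n+1}^i: \gamma\in\Gamma_n\cap D_{n+1},\ i\in\{0,1\}\}$ of $\overline{O_\sigma(\eta)}$ restricted to $C_n$ (using that $C_{n+1}\subseteq C_n$). The atoms $\sigma^{\gamma^{-1}}C_{n+1}^0$ with $\gamma\neq 1_G$ land in $C_n^0$ by item (1), while $\sigma^{\gamma^{-1}}C_{n+1}^1=\bigcup_{g\in J(n+1)}\sigma^{\gamma^{-1}}C_{n+1,g}$ splits: the pieces $C_{n+1,\gamma g}$ with $g\in J(n)$ land in $C_{n,g}\not\subseteq C_n^0$ by item (2), whereas the pieces $C_{n+1,\widetilde\gamma g}$ with $\widetilde\gamma\neq\gamma$ land back in $C_n^0$ by item (3) — and these latter pieces, reassembled over all $u,\gamma,\widetilde\gamma,g$, are exactly $W_{n+1}$. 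For $\gamma=1_G$: since $n\in\mathcal{M}$, item (4) gives $C_{n+1}\subseteq C_n^0$, so $\sigma^{u^{-1}}C_{n+1}^0\subseteq Z_{n+1}$ and $\sigma^{u^{-1}}C_{n+1}^1\subseteq \sigma^{u^{-1}}C_{n+1}\subseteq\sigma^{u^{-1}}C_n^0\subseteq Z_n$ — contributing the term $\bigcup_{v\in D_n}\sigma^{v^{-1}}C_{n+1}^1$. Collecting the three types of contribution and checking that $Z_{n+1}$, $W_{n+1}$, and $\bigcup_{v\in D_n}\sigma^{v^{-1}}C_{n+1}^1$ are all contained in $Z_n$ yields equality; the reverse containment $Z_n\supseteq$ the right-hand side is the part requiring this last check, and it follows because each listed set is a union of $\mathcal{P}_{n+1}$-atoms contained in some $\sigma^{u^{-1}}C_n^0$.

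\textbf{Inclusion (\ref{cont-2}) for $n\notin\mathcal{M}$ and the telescoped inclusion (\ref{cont-3}).} When $n\notin\mathcal{M}$, write $n=n_{k-1}+s'$; item (5) gives $C_{n+1}\subseteq C_{n,h_{s'}^k}$, so the $\gamma=1_G$ atoms no longer lie in $C_n^0$, and only the $\gamma\neq 1_G$ contributions survive, giving $Z_n\subseteq Z_{n+1}\cup W_{n+1}$. Finally, (\ref{cont-3}) follows by iterating: apply (\ref{cont-2}) at each index $r\in[n_{k_j}, n_{k_{j+1}})$ with $r\notin\mathcal{M}$ and (\ref{cont-1}) at each $r\in\mathcal{M}\cap[n_{k_j},n_{k_{j+1}})$, telescoping the $Z_r$ terms down to $Z_{n_{k_{j+1}}}$ and accumulating the $W_r$ terms and the $\bigcup_{v\in D_m}\sigma^{v^{-1}}C_{m+1}^1$ terms along the way. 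The main obstacle I anticipate is purely bookkeeping: making sure that in the refinement of $Z_n$ the "stray" pieces $\sigma^{\gamma^{-1}}C_{n+1,\widetilde\gamma g}$ with $\gamma\neq\widetilde\gamma$ are assembled into precisely the set $W_{n+1}$ as written (same indexing set $D_{n-1}$ shifted to $D_n$, same role of $J(n)$), and that no atom is double-counted or omitted when passing between the $v\in D_{n+1}$ and $(u,\gamma)\in D_n\times(\Gamma_n\cap D_{n+1})$ parametrizations — this is where one must be careful that $\{\sigma^{v^{-1}}C_{n+1}^i\}$ genuinely refines $\{\sigma^{u^{-1}}C_n^j\}$, which is guaranteed by Lemma 2.5 of \cite{CeCoGo23} together with $C_{n+1}\subseteq C_n$.
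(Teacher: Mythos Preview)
Your proposal is correct and follows essentially the same approach as the paper: both arguments use the five items of Lemma~\ref{Containings} to determine, for each $\mathcal{P}_{n+1}$-atom $\sigma^{(\gamma u)^{-1}}C_{n+1}^i$ (with $u\in D_n$, $\gamma\in\Gamma_n\cap D_{n+1}$), which $\mathcal{P}_n$-atom contains it, then collect those atoms landing in $Z_n$; the paper's proof is terser but your bookkeeping is accurate, including the identification of the $\gamma\neq\widetilde\gamma$ pieces with $W_{n+1}$ and the telescoping for (\ref{cont-3}).
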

\begin{proof}
Let $n\in\mathcal{M}$. Lemma \ref{Containings} implies that for every $v\in D_n$,
\begin{align*}
    \sigma^{(\overline{\gamma} v)^{-1}}C_{n+1}^0&\subseteq \sigma^{v^{-1}}C_n^0 \mbox{ for each }\overline{\gamma}\in \Gamma_{n+1}\cap D_n,\\
    \sigma^{v^{-1}}C_{n+1}^1&\subseteq \sigma^{v^{-1}} C_n^0.
    \end{align*}
    
Moreover, for $v\in D_n$, $g\in J(n)$, and $\widetilde{\gamma}\in(\Gamma_n\cap D_{n+1})\setminus\{1_G\}$,
    \begin{align*}
    \sigma^{(\gamma v)^{-1}} C_{n+1,\widetilde{\gamma} g}\subseteq \sigma^{v^{-1}}C_n^0 \mbox{ for each } \gamma\in (\Gamma_n\cap D_n)\setminus\{1_G,\widetilde{\gamma}\}.
    \end{align*}
Since $\mathcal{P}_n$ is partition of $\overline{O_\sigma(\eta)}$ and $\mathcal{P}_{n+1}$ is finer than $\mathcal{P}_n$ for every $n\in\mathbb{N}$, we can deduce equation (\ref{cont-1}).

Let $n\notin\mathcal{M}$. Lemma \ref{Containings} implies that for every $v\in D_n$ and $\widetilde{\gamma}\in (\Gamma_n\cap D_{n+1})\setminus\{1_G\}$,  
\begin{align*}
    \sigma^{(\widetilde{\gamma} v)^{-1}} C_{n+1}^0&\subseteq\sigma^{u^{-1}} C_n^0,\\
    \sigma^{(\gamma v)^{-1}}C_{n+1,\widetilde{\gamma}g}&\subseteq\sigma^{v^{-1}} C_n^0 \mbox{ for each }g\in J(n) \mbox{ and }\gamma\in (\Gamma_n\cap D_{n+1})\setminus\{1_g,\widetilde{\gamma}\}.
\end{align*}
Using that $\mathcal{P}_n$ is a partition of $\overline{O_\sigma(\eta)}$ for $n\in\mathbb{N}$, with $\mathcal{P}_{n+1}$ finer than $\mathcal{P}_n$, we obtain equation (\ref{cont-2}).

    Finally, equation (\ref{cont-3}) follows by repeatedly applying   (\ref{cont-1}) and  equation (\ref{cont-2}).
\end{proof}
\begin{corollary}\label{j,s}
    Let $j,s\in\mathbb{N}$ such that $s>j$. Then,
    \begin{align}
        Z_{n_{k_j}}\subseteq  Z_{n_{k_{s}}}\cup\bigcup_{r=n_{k_j}+1}^{n_{k_{s}}} W_r\cup\bigcup_{m\in\mathcal{M}\cap[n_{k_j}, n_{k_{s}})}\bigcup_{v\in D_{m}}\sigma^{v^{-1}}C_{m+1}^1.
    \end{align}
\end{corollary}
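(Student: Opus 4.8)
The plan is to prove Corollary \ref{j,s} by a finite induction on $s$, with equation (\ref{cont-3}) of Lemma \ref{Z's} serving both as the base case and as the engine of the inductive step. For the base case $s=j+1$ the asserted inclusion is literally equation (\ref{cont-3}) applied to the consecutive elements $n_{k_j}<n_{k_{j+1}}$ of $\mathcal{N}$, so there is nothing to do.

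For the inductive step I would assume the inclusion holds for some $s>j$,
\begin{align*}
    Z_{n_{k_j}}\subseteq Z_{n_{k_s}}\cup\bigcup_{r=n_{k_j}+1}^{n_{k_s}} W_r\cup\bigcup_{m\in\mathcal{M}\cap[n_{k_j}, n_{k_s})}\bigcup_{v\in D_m}\sigma^{v^{-1}}C_{m+1}^1,
\end{align*}
and then bound the term $Z_{n_{k_s}}$ from above by applying (\ref{cont-3}) to the consecutive pair $n_{k_s}<n_{k_{s+1}}$ in $\mathcal{N}$. Substituting this bound into the induction hypothesis and merging the three kinds of unions yields the inclusion for $s+1$. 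The one bookkeeping point to verify is that the index ranges concatenate correctly: since $\mathcal{N}$ is a subsequence of $\mathcal{M}$ we have $n_{k_s}\in\mathcal{M}$, hence $\{n_{k_j}+1,\dots,n_{k_s}\}\cup\{n_{k_s}+1,\dots,n_{k_{s+1}}\}=\{n_{k_j}+1,\dots,n_{k_{s+1}}\}$ for the $W_r$-union, and $(\mathcal{M}\cap[n_{k_j},n_{k_s}))\cup(\mathcal{M}\cap[n_{k_s},n_{k_{s+1}}))=\mathcal{M}\cap[n_{k_j},n_{k_{s+1}})$ for the $C_{m+1}^1$-union, so no term is lost or double-counted.

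I do not expect any genuine obstacle here: the corollary is simply the telescoped form of Lemma \ref{Z's}, and the only care required is the elementary interval arithmetic above. If one prefers to avoid an explicit induction, an equivalent route is to apply (\ref{cont-3}) successively to the pairs $(n_{k_j},n_{k_{j+1}}),(n_{k_{j+1}},n_{k_{j+2}}),\dots,(n_{k_{s-1}},n_{k_s})$ and chase the inclusions; after collecting the ranges of $r$ and of $m$ this produces exactly the same three unions.
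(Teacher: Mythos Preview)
Your proposal is correct and takes the same approach as the paper: the paper's proof simply reads ``This follows directly by applying (\ref{cont-3}),'' and your inductive telescoping is exactly the spelled-out version of that one-line remark. The interval bookkeeping you note is the only point requiring care, and you have handled it correctly.
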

\begin{proof}
    This follows directly by applying (\ref{cont-3}).
\end{proof}

\begin{lemma}\label{V's}
    There exists a subsequence $\mathcal{L}=(t_l)_{l\in\mathbb{N}}\subseteq\mathcal{N}$ such that for every ${\mu'\in M_G(\overline{O_\sigma(\eta)})}$, we have that
$\sum_{l=1}^{\infty} \mu'(V_{t_{l}})<\infty$,    where
    \begin{align*}
        V_{t_{l}}=\left(\bigcup_{r=t_{l-1}+1}^{t_{l}}W_r\cup\bigcup_{m\in\mathcal{M}\cap[t_{l-1},t_{l})}\bigcup_{v\in D_m} \sigma^{v^{-1}} C_{m+1}^1\right)\setminus Z_{t_{l}}, l\geq 2, \mbox{ and }V_{t_1}=\emptyset. 
    \end{align*}
\end{lemma}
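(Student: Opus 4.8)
The plan is to sidestep the elaborate union that defines $V_{t_l}$ entirely: the only feature of $V_{t_l}$ that matters is that, by construction, it is disjoint from $Z_{t_l}$, and the measure of $Z_n$ is both small along $\mathcal N$ and \emph{independent of the invariant measure}. The work is then to choose $\mathcal L$ so that $1-\mu(Z_{t_l})$ is summable.

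First I would record that for any $\mu'\in M_G(\overline{O_\sigma(\eta)})$ the quantity $\mu'(Z_n)$ does not depend on $\mu'$. Indeed, $Z_n=\bigcup_{v\in D_n}\sigma^{v^{-1}}C_n^0$ is a union of pairwise disjoint elements of the partition $\mathcal P_n$, so by invariance $\mu'(Z_n)=|D_n|\,\mu'(C_n^0)$, and by Corollary \ref{equals-partitions} this equals $|D_n|\,\mu(C_n^0)=\mu(Z_n)$. By Lemma \ref{Measure-1} applied to $\mu$, $\mu(Z_{n_{k_j}})\to 1$, i.e.\ $1-\mu(Z_{n_{k_j}})\to 0$ as $j\to\infty$.

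Next I would extract the subsequence. Choose indices $j_1<j_2<\cdots$ with $1-\mu(Z_{n_{k_{j_l}}})<2^{-l}$ for every $l\geq 2$ (take $j_1=1$, say), and set $t_l:=n_{k_{j_l}}$ and $\mathcal L:=(t_l)_{l\in\mathbb N}\subseteq\mathcal N$. Since $V_{t_l}$ is by definition of the form $(\,\cdots\,)\setminus Z_{t_l}$, it is a subset of $\overline{O_\sigma(\eta)}$ disjoint from $Z_{t_l}$; hence for every $\mu'\in M_G(\overline{O_\sigma(\eta)})$ and every $l\geq 2$,
\[
\mu'(V_{t_l})\leq \mu'\big(\overline{O_\sigma(\eta)}\setminus Z_{t_l}\big)=1-\mu'(Z_{t_l})=1-\mu(Z_{t_l})<2^{-l},
\]
using the measure-independence of $\mu'(Z_{t_l})$ established above. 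Together with $V_{t_1}=\emptyset$ this gives $\sum_{l=1}^\infty\mu'(V_{t_l})\leq\sum_{l=2}^\infty 2^{-l}<\infty$, and the bound is the same for every $\mu'$.

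There is essentially no analytic obstacle here; the one point that genuinely requires the earlier machinery is that a \emph{single} subsequence $\mathcal L$ must work for \emph{all} invariant measures simultaneously. This is exactly what Corollary \ref{equals-partitions} provides, by forcing the defect $1-\mu'(Z_n)$ to be the same for every $\mu'\in M_G(\overline{O_\sigma(\eta)})$; without it one could only obtain, for each fixed $\mu'$, a subsequence depending on $\mu'$.
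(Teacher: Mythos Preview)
Your proof is correct and follows the same approach as the paper: use the disjointness $V_{t_l}\cap Z_{t_l}=\emptyset$ to bound $\mu'(V_{t_l})\le 1-\mu'(Z_{t_l})$, then invoke Lemma~\ref{Measure-1} to extract a subsequence along which this defect is summable. The paper phrases the extraction via an auxiliary summable sequence $(\varepsilon_l)$ rather than $2^{-l}$, and it leaves the measure-independence of $\mu'(Z_{t_l})$ implicit (it is established inside the proof of Lemma~\ref{Measure-1} via Corollary~\ref{equals-partitions}), whereas you make this point explicit; but these are cosmetic differences, not a different route.
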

\begin{proof}    
Let $\{\varepsilon_l\}_{l\in\mathbb{N}}$ be a strictly decreasing sequence of positive real numbers such that $\sum_{j=1}^\infty \varepsilon_j<\infty$.
Lemma \ref{Measure-1} implies that there exists a subsequence $\mathcal{L}=\{t_l\}_{l\in\mathbb{N}}$ of $ \mathcal{N}$ such that for every $l\in\mathbb{N}$ and $r\geq l$,
\begin{align*}
  \mu'(Z_{t_{r}})\geq 1-\varepsilon_{l}.
\end{align*}
Using that $Z_{t_l}\cap V_{t_l}=\emptyset$, we have
\begin{align*}
    1-\varepsilon_{l}+\mu'(V_{t_{l}})\leq \mu'(Z_{t_{l}})+\mu'(V_{t_{l}})\leq 1,
\end{align*}
which implies $\mu'(V_{t_{l}})\leq \varepsilon_{l}$. Hence, 
$
\sum_{j=1}^\infty \mu'(V_{t_{j}})\leq \sum_{j=1}^\infty \varepsilon_j<\infty$.
\end{proof}

\begin{lemma}\label{Measure-Inter-1}
    For every $t_l\in\mathcal{L}$ we have
    \begin{align*}
        Z_{t_l}\subseteq \left(\bigcap_{j=l}^\infty Z_{t_{j}}\right)\cupdot \bigcup_{r=l+1}^\infty V_{t_{r}}, 
    \end{align*}
    where $V_{t_r}$ is defined as in Lemma \ref{V's}.
\end{lemma}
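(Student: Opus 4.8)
The plan is to run a ``first--exit--time'' argument on the sequence $(Z_{t_{l}})_{l\in\mathbb{N}}$, using the telescoping inclusion packaged in Corollary~\ref{j,s} to control each individual exit.

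First I would record a clean one--step inclusion along $\mathcal{L}$. Since $\mathcal{L}$ is a subsequence of $\mathcal{N}$, for each $r\geq 2$ the consecutive terms $t_{r-1}<t_{r}$ of $\mathcal{L}$ occur in $\mathcal{N}$, say as $t_{r-1}=n_{k_{j'}}$ and $t_{r}=n_{k_{j''}}$ with $j'<j''$, so Corollary~\ref{j,s} applied to the indices $j',j''$ gives
\[
Z_{t_{r-1}}\subseteq Z_{t_{r}}\cup\bigcup_{q=t_{r-1}+1}^{t_{r}}W_{q}\cup\bigcup_{m\in\mathcal{M}\cap[t_{r-1},t_{r})}\bigcup_{v\in D_{m}}\sigma^{v^{-1}}C_{m+1}^{1}.
\]
By the very definition of $V_{t_{r}}$ in Lemma~\ref{V's}, deleting $Z_{t_{r}}$ from the union of the last two terms leaves exactly $V_{t_{r}}$, so the inclusion can be rewritten as $Z_{t_{r-1}}\subseteq Z_{t_{r}}\cup V_{t_{r}}$, equivalently $Z_{t_{r-1}}\setminus Z_{t_{r}}\subseteq V_{t_{r}}$ for every $r\geq 2$.

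Next I would set up the dichotomy. Fix $x\in Z_{t_{l}}$. Either $x\in Z_{t_{j}}$ for every $j\geq l$, in which case $x\in\bigcap_{j=l}^{\infty}Z_{t_{j}}$; or the set $\{\,j\geq l:\ x\notin Z_{t_{j}}\,\}$ is non--empty, and since $x\in Z_{t_{l}}$ its minimum $p$ satisfies $p\geq l+1$. Then $p-1\geq l$ and $x\in Z_{t_{p-1}}$ (by the hypothesis if $p-1=l$, by minimality of $p$ otherwise), hence $x\in Z_{t_{p-1}}\setminus Z_{t_{p}}\subseteq V_{t_{p}}\subseteq\bigcup_{r=l+1}^{\infty}V_{t_{r}}$ by the previous step. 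This gives $Z_{t_{l}}\subseteq\bigl(\bigcap_{j=l}^{\infty}Z_{t_{j}}\bigr)\cup\bigcup_{r=l+1}^{\infty}V_{t_{r}}$. Finally I would check the disjointness implicit in $\cupdot$: for every $r\geq l+1$ one has $\bigcap_{j=l}^{\infty}Z_{t_{j}}\subseteq Z_{t_{r}}$ while $V_{t_{r}}\cap Z_{t_{r}}=\emptyset$ by definition, so $\bigl(\bigcap_{j=l}^{\infty}Z_{t_{j}}\bigr)\cap\bigcup_{r=l+1}^{\infty}V_{t_{r}}=\emptyset$, which is what the statement asserts.

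I do not anticipate a genuine obstacle: this is pure telescoping bookkeeping once Corollary~\ref{j,s} is available. The only point that needs attention is that consecutive terms of $\mathcal{L}$ are generally neither consecutive integers nor consecutive in $\mathcal{M}$ or $\mathcal{N}$, so one must invoke the multi--step Corollary~\ref{j,s} rather than the one--step relations (\ref{cont-1})--(\ref{cont-2}) directly, and one uses that every $t_{l}$ lies in $\mathcal{M}$ (in fact in $\mathcal{N}$) so that the hypotheses there are met; after that, the index ranges appearing in Corollary~\ref{j,s} match those in the definition of $V_{t_{r}}$ verbatim. If one additionally wanted the $V_{t_{r}}$ to be pairwise disjoint, one would instead split $Z_{t_{l}}$ as $\bigcap_{j=l}^{\infty}Z_{t_{j}}$ together with the mutually disjoint ``first exit at $p$'' pieces $Z_{t_{l}}\cap\cdots\cap Z_{t_{p-1}}\setminus Z_{t_{p}}$ $(p\geq l+1)$, each contained in $V_{t_{p}}$; but only the disjointness stated in the lemma is needed in what follows.
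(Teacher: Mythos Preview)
Your proof is correct and follows essentially the same route as the paper's: both extract from Corollary~\ref{j,s} the one-step inclusion $Z_{t_{r-1}}\subseteq Z_{t_r}\cup V_{t_r}$ and then telescope along $\mathcal{L}$. The only difference is cosmetic---the paper writes the telescoping as an induction on $p$ proving $Z_{t_l}\subseteq\bigl(\bigcap_{j=l}^{l+p}Z_{t_j}\bigr)\cup\bigcup_{r=l+1}^{l+p}V_{t_r}$ and then passes to the limit, whereas your first-exit-time argument carries out the same idea pointwise in one stroke.
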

\begin{proof} We claim that for every $l,p\in\mathbb{N}$, it is true that
\begin{align*}
    Z_{t_l}\subseteq \bigcap_{j=l}^{l+p} Z_{t_j}\cupdot \bigcup_{r=l+1}^{l+p} V_r.
\end{align*}
We will apply induction on $p$. 
Corollary \ref{j,s} and the definition of $V_{t_l}$ implies the claim when $p=1$ and every $l\in\mathbb{N}$.

Suppose that for some $p\in\mathbb{N}$ and every  $l\in\mathbb{N}$ we have that
\begin{align*}
    Z_{t_l}\subseteq \left(\bigcap_{j=l}^{l+p}Z_{t_{j}}\right)\cupdot \bigcup_{r=l+1}^{l+p} V_{t_{r}}.
\end{align*}
We need to prove that
\begin{align*}
    Z_{t_l}\subseteq \left(\bigcap_{j=l}^{l+p+1}Z_{t_{j}}\right)\cupdot \bigcup_{r=l+1}^{l+p+1} V_{t_{r}}.
\end{align*}
Corollary \ref{j,s} implies that
 $Z_{t_{l+p}}\subseteq (Z_{t_{l+p}}\cap Z_{t_{l+p+1}})\cupdot V_{t_{l+p+1}}$. Therefore,
 \begin{align*}
     Z_{t_l}&\subseteq \left(\bigcap_{j=l}^{l+p-1}Z_{t_{j}}\cap Z_{t_{l+p}}\right)\cupdot \bigcup_{r=l+1}^{l+p} V_{t_{r}}\\
     &\subseteq \left(\bigcap_{j=l}^{l+p-1}Z_{t_{j}}\cap ((Z_{t_{l+p}}\cap Z_{t_{l+p+1}})\cup V_{t_{l+p+1}})\right)\cupdot \bigcup_{r=l+1}^{l+p} V_{t_{r}}\\
     &\subseteq \left(\bigcap_{j=l}^{l+p+1}Z_{t_{j}}\right)\cup\left( V_{t_{l+p+1}}\cap \bigcap_{j=l}^{l+p-1}Z_{t_{j}}\right)\cupdot \bigcup_{r=l+1}^{l+p} V_{t_{r}}\\
     &\subseteq \left(\bigcap_{j=l}^{l+p+1}Z_{t_{j}}\right)\cupdot \bigcup_{r=l+1}^{l+p+1} V_{t_{r}}.
 \end{align*}
 
Therefore, for every $l\in\mathbb{N}$ and all $k\in\mathbb{N}$ such that $k>l$, we have that
\begin{align*}
    Z_{t_l}\subseteq \left(\bigcap_{j=l}^{k} Z_{t_{j}}\right)\cup\bigcup_{r=l+1}^{\infty} V_{t_r}.
\end{align*}
Consequently,
\begin{align*}
    Z_{t_l}\subseteq \left(\bigcap_{j=l}^{\infty} Z_{t_{j}}\right)\cupdot\bigcup_{r=l+1}^{\infty} V_{t_r}.
\end{align*}

\end{proof}
\begin{lemma}\label{Measure-Inter-12}
    For every $\mu’\in M_G(\overline{O_\sigma(\eta)})$, we establish
    \begin{align}\label{Inter-1}
        \lim_{l\to\infty}\mu’\left(\bigcap_{j=l}^\infty Z_{t_j}\right)=1,
    \end{align}
    and $\mu’(A)=1$, where
    \begin{align*}
         A=\bigcap_{g\in G}\bigcup_{l\in\mathbb{N}}\bigcap_{j=l}^\infty \sigma^{g} Z_{t_j}.
    \end{align*}
\end{lemma}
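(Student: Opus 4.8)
The plan is to feed the set inclusion of Lemma \ref{Measure-Inter-1} into $\mu'$, use the summability provided by Lemma \ref{V's} together with the fact that $\mu'(Z_{t_l})\to 1$, and then promote the resulting measure-one statement to a $G$-invariant one via the invariance of $\mu'$ and the countability of $G$.

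Fix $\mu'\in M_G(\overline{O_\sigma(\eta)})$. Since $\mathcal{L}$ is a subsequence of $\mathcal{N}$, Lemma \ref{Measure-1} yields $\lim_{l\to\infty}\mu'(Z_{t_l})=1$. Applying $\mu'$ to the inclusion $Z_{t_l}\subseteq\big(\bigcap_{j=l}^\infty Z_{t_j}\big)\cup\bigcup_{r=l+1}^\infty V_{t_r}$ of Lemma \ref{Measure-Inter-1} and using countable subadditivity,
\[
\mu'(Z_{t_l})\;\le\;\mu'\Big(\bigcap_{j=l}^\infty Z_{t_j}\Big)+\sum_{r=l+1}^\infty\mu'(V_{t_r}).
\]
By Lemma \ref{V's} the series $\sum_{r}\mu'(V_{t_r})$ converges, so its tail $\sum_{r=l+1}^\infty\mu'(V_{t_r})$ tends to $0$ as $l\to\infty$. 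Combining this with $\mu'(Z_{t_l})\to 1$ and the trivial bound $\mu'(\bigcap_{j\ge l}Z_{t_j})\le 1$, we obtain $\lim_{l\to\infty}\mu'(\bigcap_{j=l}^\infty Z_{t_j})=1$, which is exactly (\ref{Inter-1}).

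For the statement about $A$, note first that the sets $\bigcap_{j=l}^\infty Z_{t_j}$ increase with $l$ (indeed $\bigcap_{j\ge l}Z_{t_j}=Z_{t_l}\cap\bigcap_{j\ge l+1}Z_{t_j}\subseteq\bigcap_{j\ge l+1}Z_{t_j}$), so by continuity from below and (\ref{Inter-1}),
\[
\mu'\Big(\bigcup_{l\in\mathbb{N}}\bigcap_{j=l}^\infty Z_{t_j}\Big)=\lim_{l\to\infty}\mu'\Big(\bigcap_{j=l}^\infty Z_{t_j}\Big)=1.
\]
Call this full-measure set $B$. Since each $\sigma^g$ is a homeomorphism, it commutes with countable unions and intersections, so $\sigma^g B=\bigcup_{l\in\mathbb{N}}\bigcap_{j=l}^\infty\sigma^g Z_{t_j}$; and since $\mu'$ is $\sigma$-invariant, $\mu'(\sigma^g B)=\mu'(B)=1$ for every $g\in G$. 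Finally, $G$ being countable, $A=\bigcap_{g\in G}\sigma^g B$ is a countable intersection of sets of $\mu'$-measure $1$, hence $\mu'(A)=1$.

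Main obstacle: there is essentially no obstacle once the earlier lemmas are in place; the argument is routine measure theory. The only points that need a little care are checking the monotonicity of $\bigcap_{j\ge l}Z_{t_j}$ in $l$ (immediate) and making sure that the discrepancy between $Z_{t_l}$ and $\bigcap_{j\ge l}Z_{t_j}$ is genuinely absorbed by the summable family $\{V_{t_r}\}_{r}$, which is precisely the combined content of Lemma \ref{Measure-Inter-1} and Lemma \ref{V's}.
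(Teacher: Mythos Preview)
Your proof is correct and follows essentially the same approach as the paper: apply $\mu'$ to the inclusion from Lemma~\ref{Measure-Inter-1}, use Lemma~\ref{V's} to kill the tail term, invoke Lemma~\ref{Measure-1} for $\mu'(Z_{t_l})\to 1$, and then deduce $\mu'(A)=1$ from the monotonicity in $l$ together with the $G$-invariance of $\mu'$ and countability of $G$. The paper's proof is slightly terser on the last step (it simply says the conclusion ``follows directly''), while you spell out the continuity-from-below and countable-intersection arguments, but the substance is identical.
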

\begin{proof}
     Lemma \ref{Measure-Inter-1} implies that for every $t_l\in\mathcal{L}$ 
        $$Z_{t_l}\subseteq \left(\bigcap_{j=l}^\infty Z_{t_j}\right)\cupdot\bigcup_{r=l+1}^\infty V_{t_r}.$$
    Therefore,
    \begin{align*}
        \mu'(Z_{t_l})\leq\mu'\left[ \left(\bigcap_{j=l}^\infty Z_{t_j}\right)\cupdot\bigcup_{r=l+1}^\infty V_{t_r}\right]=\mu' \left(\bigcap_{j=l}^\infty Z_{t_j}\right)+\mu'\left(\bigcup_{r=l+1}^\infty V_{t_r}\right)\leq 1.
    \end{align*}
Thus,
\begin{align*}
    \lim_{l\to\infty}\mu'(Z_{t_l})\leq\lim_{l\to\infty}\mu' \left(\bigcap_{j=l}^\infty Z_{t_j}\right)+\lim_{l\to\infty}\mu'\left(\bigcup_{r=l+1}^\infty V_{t_r}\right)\leq 1.
\end{align*}
Since $\sum_{r=1}^\infty \mu'(V_{t_r})$ converges by Lemma \ref{V's}, we obtain that $$\lim_{l\to\infty}\mu'\left(\bigcup_{r=l+1}^\infty V_{t_r}\right)=0.$$
Therefore, Lemma  
\ref{Measure-1} implies equation (\ref{Inter-1}).
 The last part of this Lemma follows directly from (\ref{Inter-1}), since it implies $\mu'(\bigcup_{l\in\mathbb{N}}\bigcap_{j=l}^\infty Z_{t_j})=1$.
\end{proof}
The following Lemma is inspired by \cite[Lemma 5.7]{CeCoGo23}.
\begin{lemma}\label{1-1}
    Let $A$ be as in Lemma \ref{Measure-Inter-12} and let $\pi: \overline{O_\sigma(\eta)}\to \overleftarrow{G}$ be the factor map from $\overline{O_\sigma(\eta)}$ to its associated $G$-odometer as in Proposition \ref{pimap}. Then, $\pi$ is injective when restricted to $A$.
\end{lemma}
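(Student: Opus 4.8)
The plan is to show directly that if $x,x'\in A$ satisfy $\pi(x)=\pi(x')$, then $x(g)=x'(g)$ for every $g\in G$, so that $x=x'$. I would fix $g\in G$ and transfer the question to the identity by setting $y:=\sigma^{g^{-1}}x$ and $y':=\sigma^{g^{-1}}x'$; from the definition of the shift one has $y(1_G)=x(g)$ and $y'(1_G)=x'(g)$, and since $\pi$ is a factor map onto $(\overleftarrow{G},\phi,G)$ we get $\pi(y)=\phi^{g^{-1}}\pi(x)=\phi^{g^{-1}}\pi(x')=\pi(y')$. Since $x\in A$, in particular $x\in\bigcup_{l\in\mathbb{N}}\bigcap_{j\ge l}\sigma^g Z_{t_j}$ for this particular $g$, so there is an $l$ with $x\in\sigma^g Z_{t_j}$, i.e.\ $y\in Z_{t_j}$, for all $j\ge l$; enlarging $l$, the same holds for $y'$. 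Thus the problem reduces to showing: if $y,y'\in Z_{t_j}$ for all large $j$ and $\pi(y)=\pi(y')$, then $y(1_G)=y'(1_G)$.

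For a fixed large $j$, I would exploit the structure $Z_{t_j}=\bigcup_{v\in D_{t_j}}\sigma^{v^{-1}}C_{t_j}^0$. Since $\{\sigma^{v^{-1}}C_{t_j}\mid v\in D_{t_j}\}$ is a partition of $\overline{O_\sigma(\eta)}$ and $\sigma^{v^{-1}}C_{t_j}^0\subseteq\sigma^{v^{-1}}C_{t_j}$, there is a unique $v_j\in D_{t_j}$ with $\sigma^{v_j}y\in C_{t_j}^0$, and similarly a unique $v_j'$ with $\sigma^{v_j'}y'\in C_{t_j}^0$. By the description of $\pi$ in Proposition~\ref{pimap}, the $t_j$-th coordinate of $\pi(y)$ is $v_j^{-1}\Gamma_{t_j}$ and that of $\pi(y')$ is $v_j'^{-1}\Gamma_{t_j}$; from $\pi(y)=\pi(y')$ and the fact that $D_{t_j}$ is a fundamental domain of $G/\Gamma_{t_j}$ I conclude $v_j=v_j'$.

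The last ingredient is that $C_{t_j}^0$ is rigid on the fundamental domain $D_{t_j}$: any $z\in C_{t_j}^0$ satisfies $z(u)=\eta(u)$ for $u\in D_{t_j}\cap\Per(\eta,\Gamma_{t_j})$ (because $z\in C_{t_j}$) and $z(u)=0$ for $u\in J(t_j)$ (by the definition of $C_{t_j}^0$), while Proposition~\ref{J-sub} together with \eqref{T1} gives the disjoint decomposition $D_{t_j}=\big(D_{t_j}\cap\Per(\eta,\Gamma_{t_j})\big)\sqcup J(t_j)$; hence $z|_{D_{t_j}}$ does not depend on $z$. Applying this to $\sigma^{v_j}y$ and $\sigma^{v_j}y'$ (legitimate since $v_j=v_j'$) and evaluating at $v_j\in D_{t_j}$ yields $y(1_G)=(\sigma^{v_j}y)(v_j)=(\sigma^{v_j}y')(v_j)=y'(1_G)$, hence $x(g)=x'(g)$; since $g\in G$ was arbitrary, $x=x'$, and $\pi|_A$ is injective.

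I expect the only genuinely delicate point to be the first reduction: ensuring that for \emph{every} $g\in G$ both translated points $\sigma^{g^{-1}}x,\sigma^{g^{-1}}x'$ lie in the towers $Z_{t_j}$ for \emph{all} large $j$ simultaneously. This is exactly what membership in $A$ encodes ($A$ is a $\sigma$-invariant $\liminf$-type set, shown conull in Lemma~\ref{Measure-Inter-12}), and, combined with $\pi(x)=\pi(x')$, it forces the index $v_j$ to sit in the same element of the fundamental domain $D_{t_j}$. Once that coincidence of indices is in hand, the rigidity of $C_n^0$ on $D_n$ closes the argument with no further computation.
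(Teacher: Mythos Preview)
Your proposal is correct and follows essentially the same approach as the paper: fix a coordinate $g$, use membership in $A$ to place the shifted points $\sigma^{g^{-1}}x,\sigma^{g^{-1}}x'$ inside $Z_{t_j}$ for all large $j$, use $\pi(x)=\pi(x')$ to force the index $v_j\in D_{t_j}$ to coincide, and then invoke that elements of $C_{t_j}^0$ are completely determined on $D_{t_j}$. The paper carries out the same computation starting from $\pi(x)=\pi(x')$ at the level of the original points and then translating, whereas you translate first and apply $\pi$ afterwards; the two are equivalent since $\pi$ is equivariant.
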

\begin{proof}
Let $x,y\in A$ such that $\pi(x)=\pi(y)$. We aim to prove that $x=y$. Let $g\in G$. 
Since $x,y\in A$,  there exists $l_0\in\mathbb{N}$ such that
$\sigma^{g^{-1}} x,\sigma^{g^{-1}}y\in \bigcap_{j=l_0}^{\infty} Z_{t_j}$.
Let $k\geq l_0$.
 $\pi(x)=\pi(y)$ implies there exists $v_k\in D_{t_k}$ such that  $x,y\in\sigma^{v_k^{-1}} C_{t_k}$. 
Therefore, $\sigma^{g^{-1}}x,\sigma^{g^{-1}}\in \sigma^{{(v_kg)}^{-1}}C_{t_k}$. Let $v\in D_{t_k}$ such that $v_kg\in v\Gamma_{t_k}$. This implies $\sigma^{g^{-1}}x,\sigma^{g^{-1}}\in \sigma^{{v}^{-1}}C_{t_k}^0$. Thus, there exist $w,z\in C_{t_k}^0$ such that $\sigma^{g^{-1}}x=\sigma^{v^{-1}}w$ and $\sigma^{g^{-1}} y=\sigma^{v^{-1}}z$. 
Note that $w|_{D_{t_k}}=z|_{D_{t_k}}$ since $w$ and $z$ are in $C_{t_k}^0$. In particular, $w(v)=z(v)$. Therefore, $x(g)=y(g)$. Since this holds for every $g\in G$, we conclude that $x=y$.
\end{proof}

\begin{proposition}\label{Conjugacy}
If  $\mu’\in M_G(\overline{O_\sigma(\eta)})$,  then 
$\pi: \overline{O_\sigma(\eta)}\to\overleftarrow{G}$ is a measure conjugacy of $\mu'$.
\end{proposition}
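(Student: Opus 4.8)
The plan is to exhibit $G$-invariant conull Borel sets $X'\subseteq\overline{O_\sigma(\eta)}$ and $Y'\subseteq\overleftarrow{G}$ on which $\pi$ restricts to a bimeasurable bijection pushing $\mu'$ to the Haar measure $\nu$ of $\overleftarrow{G}$; by the definition of measure conjugacy in Section~\ref{sec:basicdef}, this is precisely the statement that $\pi$ is a measure conjugacy of $\mu'$. The candidate for $X'$ is the set $A$ from Lemma~\ref{Measure-Inter-12}, and for $Y'$ its image $\pi(A)$.

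First I would dispatch the routine points. Since the $G$-odometer $(\overleftarrow{G},\phi,G)$ is minimal and equicontinuous it is uniquely ergodic, so the pushforward $\pi(\mu')$, being a $\phi$-invariant probability measure on $\overleftarrow{G}$, must equal $\nu$. Next, each $Z_{t_j}$ is clopen --- it is a finite union of translates of the clopen sets $C_{t_j}^0\in\mathcal{P}_{t_j}$ --- so $A=\bigcap_{g\in G}\bigcup_{l\in\mathbb{N}}\bigcap_{j\ge l}\sigma^{g}Z_{t_j}$ is Borel; and $A$ is $G$-invariant, because applying $\sigma^{h}$ only reindexes the outer intersection over $g\in G$. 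Lemma~\ref{Measure-Inter-12} already yields $\mu'(A)=1$.

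Then comes the one step that is not pure bookkeeping. By Lemma~\ref{1-1}, $\pi|_A$ is injective; since $A$ is a Borel subset of the Polish space $\overline{O_\sigma(\eta)}$ and $\pi$ is continuous, the Lusin--Souslin theorem gives that $Y':=\pi(A)$ is a Borel subset of $\overleftarrow{G}$ and that $\pi|_A\colon A\to Y'$ is a Borel isomorphism, hence a bimeasurable bijection. Equivariance of $\pi$ together with $G$-invariance of $A$ makes $Y'$ $G$-invariant, and $\nu(Y')=\pi(\mu')(\pi(A))=\mu'(\pi^{-1}(\pi(A)))\ge\mu'(A)=1$, so $Y'$ is $\nu$-conull.

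Finally I would assemble the pieces: taking $X'=A$ and $Y'=\pi(A)$, the factor map $\pi$ restricts to a bimeasurable bijection between the invariant conull sets $X'$ and $Y'$, and $\nu=\pi(\mu')=(\pi|_{X'})(\mu')$ since $\mu'(X')=1$; thus $(\overline{O_\sigma(\eta)},\sigma,G,\mu')$ and $(\overleftarrow{G},\phi,G,\nu)$ are measure conjugate via $\pi$. The main obstacle is the appeal to the Lusin--Souslin theorem guaranteeing measurability of the inverse of $\pi|_A$ (equivalently, that $\pi(A)$ is Borel); all the genuinely hard work --- controlling the measures of the sets $Z_n$ and proving injectivity on $A$ --- is already packaged into Lemma~\ref{Measure-Inter-12} and Lemma~\ref{1-1}, so everything here reduces to that descriptive--set--theoretic fact together with unique ergodicity of the odometer and pushforward bookkeeping.
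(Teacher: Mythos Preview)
Your proposal is correct and follows essentially the same route as the paper: take $X'=A$ from Lemma~\ref{Measure-Inter-12}, use Lemma~\ref{1-1} for injectivity of $\pi|_A$, invoke a descriptive-set-theoretic fact for bimeasurability, and conclude via unique ergodicity of $\overleftarrow{G}$. The only cosmetic difference is that where you name the Lusin--Souslin theorem directly, the paper cites \cite[Theorem~2.8]{Gl03} for the same purpose; your write-up is in fact slightly more careful than the paper's in explicitly verifying that $A$ is Borel, $G$-invariant, and that $\pi(A)$ is $\nu$-conull.
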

\begin{proof}
Let $A$ be as in Lemma \ref{Measure-Inter-12}. By Lemma \ref{1-1}, we know that $\pi|_A: A\to\pi(A)$ is a bijective map satisfying $\pi|_A(\sigma^{g}x)=\phi^{g}\pi|_A(x)$. Moreover, \cite[Theorem 2.8]{Gl03} implies that $\pi|_A$ and $\pi|_A^{-1}$ are measurable. Recall $\overleftarrow{G}$ is uniquely ergodic with unique invariant measure $\nu$. Thus,  $\nu(B)=\mu'(\pi|_A^{-1}(B))$ for every  $B\subseteq \pi(A)$ Borel set. This implies that $\pi|_A$ is a measure conjugacy of $\mu'$.
\end{proof}

The following Proposition is a well-known fact. We provide a proof for completeness.
\begin{proposition}\label{Ergodic-topo}
    Let $(X,\sigma,\mu')$ and $(Y,\phi,\nu)$  be two measure conjugate dynamical systems, where the measure conjugacy is given by $\pi': X'\to Y'$ with $X'\subseteq X$, $Y'\subseteq Y$ 
 invariant Borel sets. Then $\mu'$  is ergodic if and only if $\nu$ is ergodic.
\end{proposition}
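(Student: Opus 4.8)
The plan is to transport invariant Borel sets across the measure conjugacy $\pi'$ and to exploit that $\nu$ is the pushforward of $\mu'$ under it. The first thing I would note is that the statement is completely symmetric in the two systems: since $\pi'\colon X'\to Y'$ is a bimeasurable bijection intertwining the actions (being the restriction of a factor map to invariant sets) and $\nu=\pi'(\mu')$, its inverse $(\pi')^{-1}\colon Y'\to X'$ has the same properties with $\mu'=(\pi')^{-1}(\nu)$. Hence it suffices to prove one implication, say that ergodicity of $\mu'$ forces ergodicity of $\nu$, and then invoke this symmetry for the converse.

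For that implication I would start from an arbitrary $\phi$-invariant Borel set $B\subseteq Y$. Since $Y'$ is $\phi$-invariant and $\nu$-conull, I would first replace $B$ by $B\cap Y'$ (this changes neither $\nu(B)$ nor the $\phi$-invariance) and set $A=(\pi')^{-1}(B\cap Y')\subseteq X'$. Measurability of $\pi'$ together with the fact that $X'$ is Borel in $X$ makes $A$ a Borel subset of $X$. The one genuinely substantive step is checking that $A$ is $\sigma$-invariant: for $x\in A$ and $g\in G$, invariance of $X'$ gives $\sigma^g x\in X'$, and then the intertwining identity $\pi'(\sigma^g x)=\phi^g\pi'(x)$ combined with $\phi^g(B\cap Y')=B\cap Y'$ shows $\sigma^g x\in A$; running the same argument with $g^{-1}$ yields $\sigma^g A=A$ for every $g\in G$. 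Ergodicity of $\mu'$ then gives $\mu'(A)\in\{0,1\}$, and since $\nu(B)=\nu(B\cap Y')=\mu'\big((\pi')^{-1}(B\cap Y')\big)=\mu'(A)$ by the definition of the measure conjugacy, we get $\nu(B)\in\{0,1\}$. As $B$ was an arbitrary $\phi$-invariant Borel set, $\nu$ is ergodic.

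I do not expect a real obstacle here — this is essentially the classical fact that ergodicity is a measure-isomorphism invariant — so the main thing to be careful about is the bookkeeping inherent to working $(\bmod 0)$: one must intersect with the invariant conull sets $X'$ and $Y'$ before transporting sets, and one must use the bimeasurability of $\pi'$ (not merely its measurability) to guarantee that both $(\pi')^{-1}(B\cap Y')$ and, in the symmetric direction, $\pi'(A\cap X')$ are Borel. These are exactly the features of a measure conjugacy $(\bmod 0)$ recalled in Section~\ref{sec:basicdef}, so no further input is needed.
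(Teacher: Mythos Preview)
Your proposal is correct and follows essentially the same approach as the paper: intersect an invariant Borel set $B\subseteq Y$ with the conull invariant set $Y'$, pull it back via $\pi'$ to obtain an invariant Borel set in $X$, apply ergodicity of $\mu'$, and use the pushforward relation $\nu=\pi'(\mu')$; the converse is handled by symmetry. The only difference is that you spell out in more detail why the pullback is $\sigma$-invariant and why bimeasurability matters, whereas the paper simply asserts these facts.
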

\begin{proof}
The measure conjugacy implies that for every Borel set $B\subseteq Y$, we have  $\nu(B)=\mu'(\pi'^{-1}(B))$. Suppose that $\mu'$ is ergodic and
     let $B\subseteq Y$ an invariant Borel set. Observe that $C=\pi'^{-1}(B\cap Y')$ is an invariant Borel set.
    Thus,
    \begin{align*}
        \nu(B)=\nu(B\cap Y')=\mu'(\pi'^{-1}(B\cap Y'))=\mu'(C)\in \{0,1\}.
    \end{align*}
    Therefore, $\nu$ is an ergodic measure. We can interchange the roles of $\pi'$ and $\pi'^{-1}$ to obtain that $\nu$ ergodic implies $\mu'$ ergodic.
\end{proof}

\begin{proof}[Proof of Theorem \ref{theo:main1}] 
 Recall that if we take a subsequence $(\Gamma_{n_j})_{j\in\mathbb{N}}\subseteq (\Gamma_n)_{n\in\mathbb{N}}$, then the $G$-odometers associated coincide, i.e., $\overleftarrow{G}=\varprojlim(G/\Gamma_n,\varphi_n)=\varprojlim(G/\Gamma_{n_j},\varphi_{n_j})$ (See \cite{CoPe08}).  
 Therefore, possibly by taking a subsequence of $(\Gamma_n)_{n\in\mathbb{N}}$, we can suppose that $(\Gamma_n)_{n\in\mathbb{N}}$ satisfies condition (\ref{LL}). 
 Let $\eta\in\{0,1\}^G$ be the Toeplitz sequence defined in Section \ref{sec:irregular-construction}. Proposition \ref{T1T2} ensures that $\eta$ is irregular, and Proposition \ref{period-structure} implies that $\overleftarrow{G}$ is the maximal e\-qui\-con\-ti\-\-nuous factor of $\overline{O_\sigma(\eta)}$. On the other hand, Proposition \ref{Conjugacy} and Proposition \ref{Ergodic-topo} imply that $\overline{O_\sigma(\eta)}$ is uniquely ergodic. Consequently,  $\overline{O_\sigma(\eta)}$ is a topo-isomorphic  extension of its maximal e\-qui\-con\-ti\-\-nuous factor $\overleftarrow{G}$ by Proposition \ref{Conjugacy}.
\end{proof}

\begin{proof}[Proof of Corollary \ref{coro:main2}] This follows directly from Theorem \ref{theo:main1}, since assuming that $G$ is amenable, we can apply Theorem \ref{FuGrLe}. 
    
\end{proof}

\subsection*{Acknowledgements} 
The author gratefully thanks María Isabel Cortez for her invaluable guidance and insightful comments throughout this work. Additionally, the author is grateful with Maik Gröger and Till Hausser for meaningful conversation related to this research.

\end{document}